\DeclareMathOperator*{\supp}{supp}
\DeclareMathOperator*{\dist}{dist}
\newtheorem{theorem}{Theorem}[section]
\newtheorem*{problem*}{Problem}
\newtheorem*{proposition*}{Proposition}
\newtheorem{lemma}[theorem]{Lemma}
\newtheorem{proposition}[theorem]{Proposition}
\newtheorem{corollary}[theorem]{Corollary}
\theoremstyle{remark}
\newtheorem*{remark*}{Remark}
\newtheorem*{lemma*}{Lemma}
\newtheorem{remark}[theorem]{Remark}
\theoremstyle{definition}
\newtheorem{definition}[theorem]{Definition}
\newtheorem*{definition*}{Definition}
\newtheorem{example}[theorem]{Example}
\newtheorem*{example*}{Example}
\newcommand{\floor}[1]{\left\lfloor #1 \right\rfloor}
\def\N{\mathbb{N}}
\def\W{{\mathcal W}}
\def\G{{\mathcal G}}
\begin{document}

\title[Weak greedy algorithms and Markushevich bases]{Weak greedy algorithms and  the equivalence between semi-greedy and almost greedy Markushevich bases}

\date{}

\author[M. Berasategui]{Miguel Berasategui}

\author[S. Lassalle]{Silvia Lassalle}

\address{IMAS--UBA--CONICET - Pab I,
Facultad de Cs. Exactas y Naturales, Universidad de Buenos
Aires, (1428) Buenos Aires, Argentina}
\email{mberasategui@dm.uba.ar}

\address{Departamento de Matem\'{a}tica y Ciencias, Universidad de San
Andr\'{e}s, Vito Dumas 284, (1644) Victoria, Buenos Aires,
Argentina  and IMAS--CONICET.}
\email{slassalle@udesa.edu.ar}

\thanks{This project was partially supported by CONICET PIP 0483 and ANPCyT PICT-2015-2299.
}

\subjclass[2010]{Primary 41A65; Secondary 46B15, 46B20.}

\begin{abstract}
We introduce and study the notion of weak semi-greedy systems -which is inspired in the concepts of semi-greedy and branch semi-greedy systems and weak thresholding sets-, and prove that in infinite dimensional Banach spaces, the notions of \textit{ semi-greedy, branch semi-greedy, weak semi-greedy, and almost greedy} Markushevich bases are all equivalent. \color{black} This completes and extends some results from \cite{Berna2019}, \cite{Dilworth2003b}, and \cite{Dilworth2012}. We also exhibit an example of a semi-greedy system that is neither almost greedy nor a Markushevich basis, showing that the Markushevich condition cannot be dropped from the equivalence result. In some cases, we obtain improved upper bounds for the corresponding constants of the systems. 
\end{abstract}

\maketitle

\section{Introduction.}\label{introduction}
Let $X$ be a Banach space over the real or complex field $\mathbb{K}$, with dual space $X'$. A sequence $(x_i)_{i}$ in $X$ is \textit{fundamental} if $X=\overline{[x_i\colon i\in \N]}$, and it is \emph{minimal} or \emph{a
minimal system} if there is a sequence of biorthogonal functionals $(x_i')_{i}\subseteq X'$ (i.e., $x_i'(x_j)=\delta_{ij}$ for every $i,j$); a sequence $(x_i')_i$ in $X'$ is \textit{total} if $x_i'(x)=0$ for every $i\in \N$ implies that $x=0$. A fundamental minimal system $(x_i)_{i}\subseteq
X$ whose sequence of biorthogonal functionals is total is a \emph{Markushevich basis} for $X$. From now on, unless otherwise stated $(x_i)_{i}\subseteq X$ denotes a fundamental minimal system for a Banach space $X$ with (unique) biorthogonal functionals $(x_i')_{i}\subset X'$, and all of our Banach spaces are infinite dimensional. Given $x\in X$, 
 $\supp{(x)}$ denotes the support of $x\in X$, that is the set $\{i\in \mathbb{N}\colon x_i'(x)\not=0\}$. A \emph{decreasing ordering} for $x$ is an injective function $\varrho_{x}\colon\N\rightarrow \N$ such that $\supp{(x)}\subseteq \varrho_{x}(\N)$, and for  all $i\le j$
$$
|x_{\varrho_{x}(i)}'(x)|\ge  |x_{\varrho_{x}(j)}'(x)|.
$$
The set of all decreasing orderings for a fixed $x\in X$ will be denoted by $D(x)$.
The \emph{greedy ordering} for $x$ is the decreasing ordering $\varrho_{x}$ with the property that if $i<j$ and $|x_{\varrho_{x}(i)}'(x)|=
|x_{\varrho_{x}(j)}'(x)|$, then $\varrho_{x}(i)< \varrho_{x}(j)$.

Note that if $(x_i)_{i}\subseteq X$ is a fundamental minimal system and $(x_i')_{i}$ is a bounded sequence, then
$(x_i)_{i}$ is bounded below (i.e., there is $r>0$ such that $\|x_i\|\ge r$ for each $i\in\N$) and
$(x_i')_{i}$ is $w^*$-null, so the greedy ordering is well defined. \\
The \emph{Thresholding Greedy Algorithm} (TGA) for a fundamental minimal system with bounded
coordinates $(x_i)_{i}\subseteq X$ gives approximations to each $x\in X$ in terms of the greedy
ordering. For $m\in\N$, the $m$-term greedy approximation to $x$ is defined as follows:
$$
\mathcal{G}_{m}(x):=\sum\limits_{i=1}^{m}x_{\rho(x,i)}'(x)x_{\rho(x,i)},
$$
where $\rho\colon X\times \N\rightarrow \N$ is the unique mapping such that for each $x\in X$, the
function $\rho(x,\cdot)$ is the greedy ordering for $x$. In this paper, $\rho$ will always denote this function. Using the convention that the sum over the empty set is zero, $\mathcal{G}_0(x)=0$. 
As usual,  given a finite subset $A$ of $\N$,  $P_A$ denotes the projection with indices in $A$, that is $P_A(x)=\sum\limits_{i\in A}x_i'(x)x_i$ and $|A|$ denotes the cardinal of $A$.

The TGA was introduced by Temlyakov in \cite{Temlyakov1998}, in the context of the trigonometric system, and extended by Konyagin and Temlyakov to general Banach spaces in \cite{Konyagin1999}, where the authors defined the concept of \emph{greedy Schauder bases} (in the context of Schauder bases, ``TGA'' refers to any algorithm that gives approximations induced by a decreasing ordering; the greedy ordering is chosen for convenience to study general minimal systems; see \cite{Konyagin2002}).

\begin{definition}\label{definitiongreedy8}
A Schauder basis $(x_i)_{i}\subseteq X$ is \textit{greedy} if there is $M>0$ such that for every $x\in X$ and each $m\in \N$,
\begin{equation}
\|x-\G_{m}(x)\|\le M\sigma_{m}(x),\nonumber
\end{equation}
where $\sigma_m(x)$ is the \emph{best $m$-term approximation error} given by
$$
\sigma_m(x)= \inf \{
\|x-y\|\colon |\supp{(y)}|\le m\}.
$$
\end{definition}
Notice that, by a  density argument, $(x_i)_{i}\subseteq X$ is a  greedy basis if and only if there is $M>0$ such that for every $x\in X$ and each $m\in \N$,
\begin{equation}
\|x-\sum_{i=1}^m x'_{\rho_x(i)}(x)x_{\rho_x(i)}\|\le M\sigma_{m}(x), \nonumber
\end{equation}
for some (or for all) $\rho_x\in D(x)$, which is the original definition in \cite{Konyagin1999}.\\

In \cite{Konyagin1999}, the authors also introduce the concept of \emph{quasi-greedy} Schauder bases, developed independently for fundamental biorthogonal systems and quasi-Banach spaces (though with somewhat different terminology) by Wojtaszczyk in \cite{Wojtaszczyk2000}. This concept was studied in several papers (see for example \cite{AA2016},  \cite{AABW2019}, \cite{Dilworth2003b}, \cite{Dilworth2003}, \cite{Dilworth2012} and \cite{Konyagin2002}).  We follow the definition from \cite{Konyagin2002}.
\begin{definition}\label{definitionquasigreedy323885} A fundamental minimal system $(x_i)_i\subseteq
X$ with bounded coordinates is \emph{quasi-greedy} if there is $M>0$ such that for all $x$
and for each $m$,
\begin{equation}
\|\mathcal{G}_{m}(x)\|\le M \|x\|. \label{quasigreedy323885}
\end{equation}
\end{definition}

It is clear from the definition that a fundamental minimal system is quasi-greedy if and only if there
is a constant $M>0$ such that for all $x$ and for each $m$,
\begin{equation}
\|x-\mathcal{G}_{m}(x)\|\le M||x||. \label{alternativeconditionquasigreedy121414}
\end{equation}
In the literature, the ``quasi-greedy constant'' of the system has been defined as the minimum $M$ for
which \eqref{quasigreedy323885} holds (see, e.g., \cite{Dilworth2003b}), the minimum $M$ for which
\eqref{alternativeconditionquasigreedy121414} holds (see, e.g., \cite{Berna2019}), or the minimum $M$
for which both hold (see, e.g.,\cite{AA2016}, \cite{Dilworth2003}, or \cite{Konyagin2002}). The differences in notation in the literature have been discussed in \cite{AA2017}, where the minimum $M$ for which \eqref{alternativeconditionquasigreedy121414} holds is called the \emph{suppression quasi-greedy} constant of the basis. We will refer to them as \emph{first quasi-greedy constant}\eqref{quasigreedy323885} and the \emph{second quasi-greedy
constant} \eqref{alternativeconditionquasigreedy121414} of the system, respectively.  \\
A notion between being greedy and quasi-greedy is that of being almost greedy. This concept was introduced by Dilworth, Kalton, Kutzarova and Temlyakov for Schauder bases \cite{Dilworth2003}, and also studied in the context of Markushevich bases in several papers (see, among others, \cite{AA2017},  \cite{Berna2019}, \cite{Dilworth2018} and \cite{Dilworth2012}).

\begin{definition}\label{definitionbiorthogonalalmostgreedy281822}
A Markushevich basis $(x_i)_{i}\subseteq X$ with bounded coordinates is \emph{almost greedy} if there
is a constant $M>0$ such that for each $x\in X$ and $m\in \N_0$,
\begin{equation}
\|x-\mathcal{G}_{m}(x)\|\le M\widetilde{\sigma}_m(x),\label{equationalmostgreedy}
\end{equation}
where
$$
\widetilde{\sigma}_m(x)= \inf\{ \|x-P_A(x)\|\colon |A|\le m\}.
$$
The minimum $M$ for which the above inequality holds is called the \emph{almost greedy constant} of the basis.
\end{definition}

Another natural weakening of the greedy notion is the concept of semi-greedy Schauder bases that was introduced in \cite{Dilworth2003b}. This concept was later extended to Markushevich bases (see e.g., \cite{Berna2019} and \cite{Dilworth2015}) and can be considered for fundamental minimal systems in general. Before we give a definition, we set
 $$
\sigma_m(x)= \inf \{
\|x-y\|\colon |\supp{(y)}|\le m \text{ and } y=P_{\supp(y)}(y) \},
$$
which extends the concept of the best-$m$-term approximation error to such systems. 

\begin{definition}\label{definitionssemigreedy278129}
A fundamental minimal system $(x_i)_{i}\subseteq X$ with bounded coordinates is \emph{semi-greedy} if there exists $M>0$ such that  for every $x\in X$ and every $m\in \N$, there is $z\in [x_i:i\in \mathcal{GS}_m(x)]$ such that
$$
\|x-z\|\le M \sigma_m(x),  \label{semigreedy278129}
$$
where $\mathcal{GS}_{m}(x):=\{ \rho(x,i)\colon1\le i\le m\}$ is the \emph{greedy set} of $x$ of cardinality $m$. Under these conditions, $z$ is called an \emph{$m$-term Chebyshev approximant} to $x$, and the minimum $M$ for which the inequality holds is called the \emph{semi-greedy constant} of the system.
\end{definition}
In semi-greedy systems, the \emph{Chebyshev Greedy Algorithm} (CGA) is used instead of the TGA. The CGA gives generally better approximations than the TGA, since the approximations to $x$ are not limited to the projections. \\ 
Weaker versions of the TGA and the CGA have been studied in several papers (see  e.g.  \cite{BBG2017},\cite{BBGHO2020}, \cite{Dilworth2018}, \cite{Dilworth2015}, \cite{Dilworth2012}, \cite{Gogyan2009} \cite{Konyagin2002} and  \cite{Temlyakov1998b}). These algorithms give approximations in terms of \emph{weak thresholding sets}, which are defined (according to \cite{Konyagin2002}) as follows:

\begin{definition}
\label{defweakthreshset485875}
Let $(x_i)_{i}\subseteq X$ be a fundamental minimal system and let $0<\tau\le 1$. Given $x\in
X$ and $m\in \N$, a set $\mathcal{W}^{\tau}(x,m)$ of cardinality $m$ is called an \emph{$m$-weak
thresholding set for $x$ with weakness parameter $\tau$} if
$$
|x_i'(x)|\ge \tau |x_j'(x)|,
$$
for all $i\in \mathcal{W}^{\tau}(x,m)$ and all $j\in\N\setminus \mathcal{W}^{\tau}(x,m)$.
In this paper, $\mathcal{W}^{\tau}(x,m)$ always denotes one of these sets. 
\end{definition}
Weak thresholding greedy algorithms give approximations to $x\in X$ by projections $P_{\mathcal{W}^{\tau}(x,m)}$, whereas weak Chebyshev greedy algorithms give instead the best approximation in terms of the vectors in $[x_i: i\in \mathcal{W}^{\tau}(x,m)]$. In this paper, we focus on the following two properties: 

\begin{definition}\label{definitionweaklamostgreedy328478}
A fundamental minimal system $(x_i)_i\subseteq X$ is \emph{weak almost greedy} with weakness parameter
$0<\tau\le 1$ (WAG($\tau$)) and constant $M$ if for every $x\in X$ and every $m\in \N$, there is a weak thresholding set $\mathcal{W}^{\tau}(x,m)$ such that
\begin{equation}
\|x-P_{\mathcal{W}^{\tau}(x,m)}(x)\|\le M\widetilde{\sigma}_{m}(x).
\label{taualmostgreedy328478}
\end{equation}
\end{definition}

\begin{definition}\label{definitionweaksemigreedy747473}
Let $(x_i)_{i}\subseteq X$ be a fundamental minimal system and $0<\tau\le 1$. The system is \emph{weak
semi-greedy with weakness parameter $\tau$} (WSG($\tau$)) and constant $M$ if for every $x\in X$ and
every $m\in \N$, there is a weak thresholding set $\mathcal{W}^{\tau}(x,m)$ and $z \in [x_i: i\in \mathcal{W}^{\tau}(x,m)]$ such that
$$
\|x-z\|\le M\sigma_{m}(x).
$$
In that case, $z$ is called an \emph{$m$-term Chebyshev $\tau$-greedy approximant} for $x$.
\end{definition}
Note that the WAG$(\tau$) concept is an extension of the almost greedy concept (Definition~\ref{definitionbiorthogonalalmostgreedy281822}) that correspons to $\tau=1$. Indeed, the greedy set $\mathcal{GS}_m(x)$ is a weak thresholding set with parameter $1$, and $\G_m(x)=P_{\mathcal{GS}_m}(x)$, so any almost greedy system is WAG($1$). Reciprocally, if $(x_i)_i$ is WAG($1$), given $x \in X$, $m\in \N$, a set $\W^{1}(x,m)$ and $\epsilon>0$, one can choose $y\in X$ with the property that $|x_i'(y)|\not=|x_j'(y)|$ for all $i\not=j$ so that $\|x-y\|\le \epsilon$ and $\W^{1}(x,m)=\mathcal{GS}_m(y)$ is the only $m$-weak thresholding set for $y$. It easily follows from this that  \eqref{taualmostgreedy328478} holds for every $x$, $m$, and \emph{every} set $\mathcal{W}^{1}(x,m)$, so in particular \eqref{equationalmostgreedy} holds. Similarly, the notion of WSG($\tau$) systems is an extension of that of semi-greedy systems, which also corresponds to the case $\tau=1$. \\
We are now in a position to describe the goal of this paper,  which is twofold. 
First, we study the relations between almost greedy and semi-greedy systems, and their relation with approximations involving weak thresholding sets. In particular, we prove that for Markushevich bases, the concepts of semi-greedy, almost greedy, WSG($\tau$) and WAG$(\tau)$ systems are all equivalent, extending results from \cite{Berna2019} and \cite{Dilworth2003b}.\\ 
Second, we focus our attention on some aspects that are significant on finite-dimensional spaces, in which the relations of their respective constants are of relevance. In particular, we answer a question from \cite{Dilworth2012}.\\
This paper is structured as follows:  In Section~\ref{sectionWAG} we study weak almost greedy systems. In Section~\ref{sectionFDSP}, we define and study a separation property that will allow us to prove our main result, Theorem~\ref{theoremsemigreedyalmostgreedy327211partI}. Section~\ref{sectionmainresults} is devoted to the study of weak semi-greedy systems. Finally, in Section~\ref{sectionfiniteandBTA} we focus on finite-dimensional spaces and extend some results from \cite{Dilworth2012}.

\section{Weak almost greedy systems.}\label{sectionWAG}
In this section, we prove that the notions of WAG($\tau$) and almost greedy systems are equivalent. Note that one of the implications is immediate by definition. Indeed, any weak thresholding set $\W^{1}(x,m)$ is also a weak thresholding set $\W^{\tau}(x,m)$ for all $0<\tau\le 1$ and almost greedy systems are  WAG($1$) systems, so they are WAG($\tau$) for all $\tau$. Moreover, for almost greedy systems, it was proven in \cite[Theorem 2.2]{Konyagin2002} that \eqref{taualmostgreedy328478} holds for \emph{every} set $\W^{\tau}(x,m)$, with $M$ only depending on $\tau$ and the almost greedy constant of the basis (while \cite[Theorem 2.2]{Konyagin2002} is stated for Schauder bases, the proof does not use the Schauder property).  \\
To prove that every WAG($\tau$) system is almost greedy, we will use the known equivalence between almost greediness and quasi-greediness plus democracy or superdemocracy - two concepts we define next. We also define the concept of hyperdemocracy,  a natural extension of both democracy notions that has its roots in \cite{Dilworth2003b}, \cite{Dilworth2003} and \cite{Wojtaszczyk2000}. 
\begin{definition}\label{definitionsuperdemocraticsequence233412}
A sequence $(x_i)_{i}\subseteq X$ is \emph{superdemocratic} if there is $K>0$ such that
\begin{equation}
\|\sum\limits_{i\in A}a_ix_i\|\le K \|\sum\limits_{j\in B}b_jx_j\|,\label{superdemocraticondition}
\end{equation}
for any pair of finite sets $A, B \subseteq \N$ with $|A|\le |B|$, and any scalars $(a_i)_{i\in A}, (b_j)_{j\in B}$ such that  $|a_i|=|b_j|$ for every $i\in A$, $j\in B$. The minimum $K$ for which the
above inequality holds is called the \emph{superdemocracy constant}  of $( x_i )_{i}$.  When \eqref{superdemocraticondition} holds with $a_i=b_j=1$ for all $i,j$, the sequence is \emph{democratic}, and the corresponding constant is the \emph{democracy constant} of $(x_i)_i$, whereas if \eqref{superdemocraticondition} holds with $|a_i|\le |b_j|$ for all $i,j$, we say that the sequence is \emph{hyperdemocratic}, and that the corresponding minimum constant is the \emph{hyperdemocracy constant} of $(x_i)_i$.
\end{definition}
From \cite[Theorem 3.3]{Dilworth2003} and its proof, we extract the following result, which characterizes almost greedy Markushevich bases, valid for real and complex Banach spaces.

\begin{theorem}\label{Theoremalmostgreedydemocraticquasigreedy} Let $(x_i)_{i}\subseteq X$ be a
Markushevich basis.
\begin{enumerate}[\upshape (a)]
\item \label{pointa} If  $(x_i)_{i}$ is almost greedy with constant $K_a$, it is quasi-greedy with second quasi-greedy constant $K_{2q}\le K_a$ and democratic with constant $K_d\le K_a$.
\item \label{pointb} If $(x_i)_{i}$ is quasi-greedy with first  quasi-greedy constant $K_{1q}$ and democratic with constant $K_d$, then  it is almost greedy with constant $K_a\le 32K_d(1+K_{1q})^4$.
\end{enumerate}
\end{theorem}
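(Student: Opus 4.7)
The plan is to treat (a) by elementary test-function arguments and (b) by invoking the standard truncation/democracy machinery from \cite{Dilworth2003}.

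For (a), the quasi-greedy bound is immediate: choosing $A=\emptyset$ in the definition of $\widetilde{\sigma}_m$ gives $\widetilde{\sigma}_m(x)\le\|x\|$, and \eqref{equationalmostgreedy} then yields $\|x-\G_m(x)\|\le K_a\|x\|$, so $K_{2q}\le K_a$. For democracy, given finite sets $A,B\subseteq \N$ with $|A|\le|B|$, I would first reduce to the case $A\cap B=\emptyset$ by peeling off the common part, and then apply the almost greedy inequality to the test vector
\[
u=\sum_{i\in A}x_i+(1+\epsilon)\sum_{j\in B}x_j,\qquad \epsilon>0.
\]
The greedy set of size $|B|$ for $u$ is precisely $B$, so $u-\G_{|B|}(u)=\sum_{i\in A}x_i$, while taking $A$ as a test set in the infimum defining $\widetilde{\sigma}_{|B|}(u)$ (legal because $|A|\le|B|$) gives $\widetilde{\sigma}_{|B|}(u)\le(1+\epsilon)\|\sum_{j\in B}x_j\|$. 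Applying \eqref{equationalmostgreedy} and letting $\epsilon\to 0$ yields $K_d\le K_a$.

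For (b), fix $x\in X$, $m\in\N$, and an arbitrary finite $D$ with $|D|\le m$, and set $G=\mathcal{GS}_m(x)$, $G_1=G\setminus D$, $D_1=D\setminus G$. The identity
\[
x-\G_m(x)=(x-P_D(x))+P_{D_1}(x)-P_{G_1}(x)
\]
reduces the problem to bounding $\|P_{D_1}(x)\|$ and $\|P_{G_1}(x)\|$ by a multiple of $\|x-P_D(x)\|$. By the greedy ordering, $\min_{i\in G_1}|x_i'(x)|\ge\max_{j\in D_1}|x_j'(x)|$, and the cardinality constraint forces $|D_1|\le|G_1|$; together these are exactly the ingredients for a democracy comparison between the two supports.

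The technical core is the standard truncation estimate for quasi-greedy systems (see, e.g., \cite{Dilworth2003,Konyagin2002}): letting $\alpha=\min_{i\in G_1}|x_i'(x)|$, quasi-greediness yields
\[
\|P_{G_1}(x)\|\le (1+K_{1q})^2\,\alpha\,\Big\|\sum_{i\in G_1}\sgn(x_i'(x))\,x_i\Big\|,
\]
obtained by flattening the coefficients on $G_1$ to $\alpha$ and extracting the tail. Democracy then replaces the $G_1$-sum of signs by a $D_1$-sum at cost $K_d$ (after an intermediate sign-stripping step costing another factor of $1+K_{1q}$), and a second truncation applied to $x-P_D(x)$ controls $\alpha\,\|\sum_{i\in D_1}\sgn(x_i'(x))x_i\|$ by $\|x-P_D(x)\|$ itself, using that $x-P_D(x)$ agrees with $x$ on $D_1$ and every such coefficient is at most $\alpha$. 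The analogous chain for $P_{D_1}(x)$ is easier since $D_1\subseteq \supp(x-P_D(x))$. Assembling these estimates with the two triangle-inequality losses produces the constant $32\,K_d(1+K_{1q})^4$. The main obstacle is the bookkeeping of constants through the truncation lemma, and verifying that it carries over from the Schauder setting of \cite{Dilworth2003} to general Markushevich bases; each truncation step costs a factor $1+K_{1q}$, so the sharper form of the estimate (in which the $\pm1$ vector is built from the actual signs of $x$, not from an arbitrary normalization) is needed to keep the exponent at four.
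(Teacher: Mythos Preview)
The paper does not give its own proof: it extracts the statement and the explicit constants directly from \cite[Theorem~3.3]{Dilworth2003}. Your sketch for (b) is precisely that argument, and the two points you flag---that the truncation lemma uses only quasi-greediness and hence transfers to Markushevich bases, and that one must keep the actual signs through the chain to hold the exponent at four---are exactly the things one has to check.

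For (a), however, there is a genuine gap in the democracy bound. The reduction to disjoint $A,B$ ``by peeling off the common part'' is not free: if $C=A\cap B$, there is no direct way to compare $\|1_{A\setminus C}+1_C\|$ with $\|1_{B\setminus C}+1_C\|$ using only the disjoint case, and routing through an auxiliary set disjoint from $A\cup B$ costs a square, giving only $K_d\le K_a^2$. Indeed, if you apply your test vector $u=1_A+(1+\epsilon)1_B$ directly to overlapping $A,B$, the greedy set of size $|B|$ is still $B$ (coefficients $2+\epsilon$ on $A\cap B$, $1+\epsilon$ on $B\setminus A$), but then $u-\G_{|B|}(u)=1_{A\setminus B}$ and $u-P_A(u)=(1+\epsilon)1_{B\setminus A}$, so the almost-greedy inequality yields only $\|1_{A\setminus B}\|\le K_a\|1_{B\setminus A}\|$---democracy for the differences, not for $A$ and $B$ themselves. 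The one-shot argument that actually gives $K_d\le K_a$ introduces an auxiliary set $E$ with $|E|=|A\cap B|$ and $E\cap(A\cup B)=\emptyset$, and uses (a small perturbation of) $x=1_{A\cup B}+(1+\epsilon)1_E$: the greedy set of size $|B|$ is then $E\cup(B\setminus A)$, so $x-\G_{|B|}(x)=1_A$, while the competitor $D=E\cup(A\setminus B)$ satisfies $|D|=|A|\le|B|$ and $x-P_D(x)\to 1_B$ as the perturbation vanishes.
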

Almost greedy Markushevich bases can also be characterized as quasi-greedy and superdemocratic (see, e.g., \cite{Berna2019}, \cite{Dilworth2003b} or \cite{BBG2017}, which considers complex spaces and improves the order of the bound for $K_a$ in \ref{pointb} if democracy is replaced with superdemocracy) or quasi-greedy and hyperdemocratic. In fact, it follows at once from Theorem~\ref{Theoremalmostgreedydemocraticquasigreedy}\ref{pointb} that every quasi-greedy hyperdemocratic or superdemocratic basis is almost greedy, whereas a proof that an almost greedy system is hyperdemocratic can be obtained combining Theorem~\ref{Theoremalmostgreedydemocraticquasigreedy}\ref{pointa} with \cite[Proposition~2]{Wojtaszczyk2000} and \cite[Lemma~2.2]{Dilworth2003}, with minor modifications for complex scalars. This implication is also established in Proposition~\ref{propositionwagag394995} below, taking $\tau=1$. \\ 
Also, note that in Theorem~\ref{Theoremalmostgreedydemocraticquasigreedy}, the hypothesis that the minimal system $(x_i)_i$ is a Markushevich basis is not necessary, since an almost greedy system is clearly quasi-greedy, and a quasi-greedy system is a Markushevich basis. We give a simple proof of this fact that follows from \cite[Theorem 1]{Wojtaszczyk2000} (see also the proof of~\cite[Corollary~3.5]{AABW2019}). If $(x_i)_i$ is quasi-greedy and $x_i'(x)=0$ for all $i\in \N$, then $\mathcal{G}_m(y)=\mathcal{G}_m(y-x)$ for every $y\in X$ and every $m\in \N$. Thus,
$$
\|x\|\le \|y-x\|+\|y-\mathcal{G}_m(y)\|+\|\mathcal{G}_m(y-x)\|\le \|y-\mathcal{G}_m(y)\|+(1+M)\|y-x\|.
$$
Since the system is fundamental, for any $\epsilon>0$ there is $m\in \N$ and $y\in [x_i\colon 1\le i\le m]$
such that $\|x-y\|\le \epsilon$. Given that $y=\mathcal{G}_m(y)$, it follows that $\|x\|\le (1+M)\epsilon$. As $\epsilon$ is arbitrary, this entails that $x=0$. 

Now we prove that every WAG($\tau$) is almost greedy. The proof is based on that of \cite[Proposition 4.4]{Dilworth2012}, adapted for our purposes.

\begin{proposition}\label{propositionwagag394995} Let $0<\tau\le 1$, and let  $(x_i)_i\subseteq X$ be a WAG($\tau$) system
with constant $M$. Then, $(x_i)_{i}$ is a quasi greedy Markushevich basis with first quasi-greedy constant $K_{1q}\le (1+M)(1+M^2\tau^{-4})$, and is hyperdemocratic with constant $K_{hd}\le M^2\tau^{-2}$.
Hence, $(x_i)_i$ is almost greedy. 
\end{proposition}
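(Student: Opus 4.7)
The plan is to establish hyperdemocracy and quasi-greediness in turn; the Markushevich property then follows from the remark preceding the proposition, and almost-greediness is immediate from Theorem~\ref{Theoremalmostgreedydemocraticquasigreedy}\ref{pointb}.

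For hyperdemocracy I would use a scaling trick. Given disjoint finite $A,B\subseteq\N$ with $|A|\le|B|$ and scalars $(a_i)_{i\in A},(b_j)_{j\in B}$ satisfying $|a_i|\le|b_j|$ for every $i\in A$, $j\in B$, fix $0<s<\tau$ and consider
$$ x := s\sum_{i\in A}a_i x_i+\sum_{j\in B}b_j x_j. $$
The crucial observation is that $B$ is the only $|B|$-weak thresholding set for $x$ with parameter $\tau$: any candidate $\W^{\tau}(x,|B|)$ meeting $A$ must, by cardinality, exclude some $j_0\in B$, and the weak-threshold inequality $s|a_{i_0}|\ge \tau|b_{j_0}|$ for $i_0\in\W\cap A$ combined with $|b_{j_0}|\ge|a_{i_0}|$ forces $s\ge\tau$, a contradiction. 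WAG$(\tau)$ then gives
$$ s\,\Bigl\|\sum_{i\in A}a_i x_i\Bigr\|=\|x-P_B(x)\|\le M\,\widetilde{\sigma}_{|B|}(x)\le M\,\Bigl\|\sum_{j\in B}b_j x_j\Bigr\|, $$
where the last inequality is obtained by projecting onto any size-$|B|$ set containing $A$. Letting $s\uparrow\tau$ (and reducing the overlapping case to the disjoint one by splitting) yields the hyperdemocracy bound.

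For quasi-greediness, fix $x\in X$ and $m\in\N$, set $\Lambda:=\mathcal{GS}_m(x)$, and let $\W$ be a weak thresholding set provided by WAG, so $\|P_\W(x)\|\le(1+M)\|x\|$. I would decompose
$$ P_\Lambda(x)=P_\W(x)+P_{\Lambda\setminus\W}(x)-P_{\W\setminus\Lambda}(x) $$
and bound the symmetric-difference terms via hyperdemocracy. The sets $\Lambda\setminus\W$ and $\W\setminus\Lambda$ have equal cardinality, and combining the weak-threshold property of $\W$ with the top-$m$ property of $\Lambda$ yields $\tau|x_j'(x)|\le|x_i'(x)|\le|x_j'(x)|$ for every $i\in\W\setminus\Lambda$ and $j\in\Lambda\setminus\W$, so hyperdemocracy applied with appropriate rescalings compares the two terms up to a factor of $K_{hd}\tau^{-1}$. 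A second application of WAG---for instance to the modified vector $x-P_{\W\cap\Lambda}(x)$, whose weak thresholding sets are naturally related to $\W\setminus\Lambda$---anchors the absolute estimate, controlling $\|P_{\W\setminus\Lambda}(x)\|$ in terms of $\|P_\W(x)\|$, and summation produces $K_{1q}\le(1+M)(1+K_{hd}\tau^{-2})=(1+M)(1+M^2\tau^{-4})$. The main obstacle is precisely this last step: WAG controls $P_\W$ as a whole and not its restriction to $\W\setminus\Lambda$, so bridging this gap through the auxiliary modified vector is the most technical part of the argument.
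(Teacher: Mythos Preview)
Your hyperdemocracy argument is essentially sound and close in spirit to the paper's, though the reduction of the overlapping case ``by splitting'' needs care: a direct decomposition $A=(A\cap B)\cup(A\setminus B)$ does not obviously work, and what is actually required is an auxiliary set $C$ disjoint from $A\cup B$ with $|C|=|B|$, through which one passes in two steps --- exactly what the paper does. Your disjoint-case bound $M\tau^{-1}$ then squares to $M^{2}\tau^{-2}$.

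The quasi-greedy argument, however, has a genuine gap, and you have in fact identified it yourself. Your ``second application of WAG'' to $y:=x-P_{\W\cap\Lambda}(x)$ cannot be made to yield information about $P_{\W\setminus\Lambda}(x)$: although $\W\setminus\Lambda$ is indeed a $\tau$-weak thresholding set for $y$, the WAG$(\tau)$ hypothesis only guarantees that \emph{some} weak thresholding set $\W'$ of the given cardinality satisfies the inequality, and there is no mechanism to force $\W'=\W\setminus\Lambda$. Without this, the symmetric-difference terms are only compared to one another, not anchored to $\|P_{\W}(x)\|$, and the stated constant $(1+M)(1+K_{hd}\tau^{-2})$ does not follow from your decomposition.

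The paper circumvents this difficulty in a different way. Rather than insisting on $|\W|=m$, it first shows that $(x_i')_i$ is bounded (hence $w^{*}$-null), so that for all sufficiently large $j$ every $\W^{\tau}(x,j)$ must contain $\Lambda=\mathcal{GS}_n(x)$. Taking $m_1$ \emph{minimal} with this containment and with the WAG inequality holding, and comparing with the WAG set of cardinality $m_1-1$, a short case analysis shows that every coefficient on $\W:=\W^{\tau}(x,m_1)$ satisfies $|x_j'(x)|\ge\tau^{2}|x_{\rho(x,n)}'(x)|$. Because now $\Lambda\subseteq\W$, the decomposition is simply $\mathcal G_m(x)=P_{\W}(x)-P_{\W\setminus\Lambda}(x)$, with a single error term; and since $\max_{i\in\W\setminus\Lambda}|x_i'(x)|\le|x_{\rho(x,n)}'(x)|\le\tau^{-2}\min_{j\in\W}|x_j'(x)|$, one hyperdemocracy application gives $\|P_{\W\setminus\Lambda}(x)\|\le K_{hd}\tau^{-2}\|P_{\W}(x)\|$ and hence the claimed bound. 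The minimality trick is precisely what furnishes the absolute anchor that your approach lacks.
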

\begin{proof}
To prove the hyperdemocracy condition, fix nonempty finite sets $A, B\subseteq \N$ with $|A|\le|B|$, and $(a_i)_{i\in A}, (b_j)_{j\in B}$ such that $|a_i|\le |b_j|$ for every $i,j$, and choose a set $C\subseteq \N$ so that  $|C|=|B|$ and $C\cap (A\cup B)=\emptyset$.
Assume without loss of generality that $a:=\max\limits_{i\in A}|a_i|>0$. For every $0<\epsilon<1$ we have
\begin{align}
(1-\epsilon)a\tau\|\sum\limits_{k\in C}x_k\|=&\|\sum\limits_{k\in C}(1-\epsilon)a\tau
x_k+\sum\limits_{j\in B}b_jx_j-\sum\limits_{j\in B}b_jx_j\|\nonumber\\
\le& M \widetilde{\sigma}_{|C|}(\sum\limits_{k\in C}(1-\epsilon)a\tau  x_k+\sum\limits_{j\in
B}b_jx_j)\nonumber\\
\le&M\|\sum\limits_{j\in B}b_jx_j\|,\label{rightside394995b}
\end{align}
the first inequality resulting from the fact that $B$ is the only $|C|$-weak thresholding set for
$\sum\limits_{k\in C}(1-\epsilon)a\tau x_k+\sum\limits_{j\in B}b_jx_j$ with
weakness parameter $\tau$. Similarly, $C$ is the only $|C|$-weak thresholding set with weakness parameter $\tau$ for $\sum\limits_{k\in C}(1+\epsilon)a\tau^{-1} x_k+\sum\limits_{i\in
A}a_ix_i$, thus
\begin{align}
\|\sum\limits_{i\in A}a_ix_i\|=&\|\sum\limits_{i\in A}a_ix_i+\sum\limits_{k\in
C}(1+\epsilon)a\tau^{-1} x_k-\sum\limits_{k\in C}(1+\epsilon)a\tau^{-1} x_k\|\nonumber\\
\le& M\widetilde{\sigma}_{|C|}(\sum\limits_{k\in C}(1+\epsilon)a\tau^{-1} x_k+\sum\limits_{i\in
A}a_ix_i)\nonumber\\
\le&(1+\epsilon)a\tau^{-1}M\|\sum\limits_{k\in C}x_k\|.\label{leftside394995b}
\end{align}
By letting $\epsilon\rightarrow 0$, it follows from \eqref{rightside394995b} and \eqref{leftside394995b}
that
$$
\|\sum\limits_{i\in A}a_ix_i\|\le M^2\tau^{-2}\|\sum\limits_{j\in B}b_jx_j\|.
$$
Therefore, $(x_i)_i$ is hyperdemocratic with $K_{hd}\le M^2\tau^{-2}$.\\
To prove that $(x_i)_i$ is quasi-greedy, first note that for every $x\in X$, there is a weak thresholding set $\mathcal{W}^{\tau}(x,1)$, so
$(|x_i'(x)|)_{i}$ is bounded. It follows by uniform boundedness that $(x_i')_{i}$ is bounded. Since $(x_i)_i$ is fundamental, this entails that $(x_i')_i$ is $w^*$-null. Now fix $x\in X$ and $m\in \N$. If $\mathcal{G}_{m}(x)=0$, there is nothing to prove. Else, let
$$
n:=\max\{ 1\le i\le m\colon  x_{\rho(x,i)}'(x)\not=0\}.
$$
Given that $x_i'(x)\not=0$ for all $i\in \mathcal{GS}_{n}(x)$ and $(x_i')_i$ is $w^*$-null, there is $j_0\in \N$ such that for each $j\ge j_0$ and each $i\in \mathcal{GS}_{n}(x)$, 
$$
|x_j'(x)|<\tau |x_i'(x)|.
$$
Thus, if $j\ge j_0$, every weak thresholding set $\mathcal{W}^{\tau}(x,j)$ contains
$\mathcal{GS}_{n}(x)$. Hence, there is a minimum $m_1\in \N$ for which there is a weak thresholding set $\mathcal{W}^{\tau}(x,m_1)$ containing $\mathcal{GS}_{n}(x)$ and such that
\eqref{taualmostgreedy328478} holds. Now if $\mathcal{W}^{\tau}(x,m_1)=\mathcal{GS}_n(x)$, then 
\begin{align*}
\|\G_{m}(x)\|=&\|\G_{n}(x)\|\le \|x\|+\|x-\G_n(x)\|=\|x\|+\|x-\sum\limits_{i\in \mathcal{W}^{\tau}(x,m_1)}x_i'(x)x_i\|\\
\le& (1+M)\|x\|\le (1+M)(1+M^2\tau^{-4})\|x\|.
\end{align*}
On the other hand, if $\mathcal{GS}_n(x)\subsetneq \mathcal{W}^{\tau}(x,m_1)$, let $\mathcal{W}^{\tau}(x,m_1-1)$ be a weak thresholding set for which
\eqref{taualmostgreedy328478} holds. By the minimality of $m_1$ we get that
$\mathcal{GS}_{n}(x)\not\subseteq \mathcal{W}^{\tau}(x,m_1-1)$, so for every $i\in
\mathcal{W}^{\tau}(x,m_1-1)$ we have
\begin{equation}
|x_i'(x)|\ge \tau |x_{\rho(x,n)}'(x)|.\label{previousset394995b}
\end{equation}
Thus, if there is $i_0\in \mathcal{W}^{\tau}(x,m_1-1)\setminus \mathcal{W}^{\tau}(x,m_1)$,
it follows from \eqref{previousset394995b} and Definition~\ref{defweakthreshset485875} that for all
$j\in \mathcal{W}^{\tau}(x,m_1)$,
\begin{equation*}
|x_j'(x)|\ge \tau |x_{i_0}'(x)|\ge \tau^{2}|x_{\rho(x,n)}'(x)|.
\end{equation*}
On the other hand, if $\mathcal{W}^{\tau}(x,m_1-1)\subseteq \mathcal{W}^{\tau}(x,m_1)$, given that
$$
\mathcal{GS}_{n}(x)\not \subseteq \mathcal{W}^{\tau}(x,m_1-1)\quad  \text{ and }\quad
\mathcal{GS}_{n}(x)\subseteq \mathcal{W}^{\tau}(x,m_1),
$$		
it follows that there is $1\le i_1\le n$ such that
$$
\mathcal{W}^{\tau}(x,m_1)=\mathcal{W}^{\tau}(x,m_1-1)\cup \{ \rho(x,i_1)\},
$$
which implies that \eqref{previousset394995b} also holds for all $i\in \mathcal{W}^{\tau}(x,m_1)$. Therefore, in any case, we have
$$
|x_j'(x)|\ge  \tau^{2}|x_{\rho(x,n)}'(x)|
$$
for all $j\in \mathcal{W}^{\tau}(x,m_1)$. In
what follows, put $\W= \mathcal{W}^{\tau}(x,m_1)$. As for all $i\in \N\setminus
\mathcal{GS}_{n}(x)$,
$$
|x_{\rho(x,n)}'(x)|\ge |x_i'(x)|,
$$
we obtain
$$
\max_{i\in \W \setminus \G_{n}(x)}|x_i'(x)|\le \min_{j\in \W} \tau^{-2}|x_j'(x)|.
$$
Hence, using that $\G_{m}(x)=\G_{n}(x)$ and  applying the hyperdemocracy condition, we get
\begin{align*}
\|\G_{m}(x)\|  \le & \|\sum\limits_{i\in \W}x_i'(x)x_i\|+\|\sum\limits_{i\in \W}x_i'(x)x_i -
\G_{n}(x)\|\\
=&\|\sum\limits_{i\in \W}x_i'(x)x_i\|+\|\sum\limits_{i\in \W\setminus
\mathcal{GS}_{n}(x)}x_i'(x)x_i\|\\
\le&\|\sum\limits_{i\in \W}x_i'(x)x_i\|+K_{hd}\|\sum\limits_{i\in \W} \tau^{-2} x_{i}'(x)x_i\|\\
\le&(1+ M^2\tau^{-4})\|\sum\limits_{i\in \W}x_{i}'(x)x_i\|\\
\le& (1+M^2\tau^{-4})(\|x\|+\|x-\sum\limits_{i\in \W}x_{i}'(x)x_i\|)\\
\le& (1+M^2\tau^{-4})(\|x\|+M\widetilde{\sigma}_{m_1}(x))\\
\le& (1+M)(1+M^2\tau^{-4})\|x\|.
\end{align*}
This proves that $(x_i)_i\subseteq X$ is quasi-greedy (with $K_{1q}$ as in the statement). Then
$(x_i)_i$ is a Markushevich basis, and it is almost greedy by
Theorem~\ref{Theoremalmostgreedydemocraticquasigreedy}.
\end{proof}

\section{The finite dimensional separation property.}\label{sectionFDSP}
In this section, we introduce and study a separation property inspired by some of the proofs in \cite{AlKa2016}.  We give upper bounds for a constant associated with this property. The constant plays a key role in some of our proofs involving Markushevich bases. 

\begin{definition}\label{definitionseparation}Let $(u_i)_i\subseteq X$ be a sequence. We say that $(u_i)_i$ has the \emph{finite dimensional separation property} (FDSP) if there is a positive constant $M$ such that for every separable subspace $Z\subset X$ and every $\epsilon>0$, there is a basic subsequence $(u_{i_k})_k$ with basis constant no greater than $M+\epsilon$ satisfying the following: For every finite dimensional subspace $F\subset Z$ there is $j_F\in \N$ such that 
\begin{equation}
\|x\|\le (M+\epsilon)\|x+z\|,\label{separation}
\end{equation}
for all $x\in F$ and all $z\in \overline{[u_{i_k}\colon k> j_F]}$. We call any such subsequence a \emph{finite dimensional separating subsequence} for $(Z, M, \epsilon)$ (and for $(u_i)_i$, leaving that implicit when clear), and we call the minimum $M$ for which this property holds the \emph{finite dimensional separation constant} $M_{fs}$ of $(u_i)_i$. 
\end{definition}
\begin{remark}Note that in order to check whether a subsequence is finite dimensional separating for $(Z, M, \epsilon)$, it is enough to check that  \eqref{separation} holds for $x$ with $\|x\|=1$ and $z\in [u_{i_k}\colon k> j_F]$. 
\end{remark}

The following lemma gives a basic characterization for finite dimensional separating sequences. The proof is immediate and is left to the reader.

\begin{lemma}\label{lemmaordering} Let $(u_i)_i\subseteq X$ be a sequence. For any sequence $(a_i)_i\subset \mathbb{K}$ with $a_i\not=0$ for all $i$, and any biyection $\pi\colon \N \rightarrow \N$, 
$(u_i)_i$ has the finite dimensional separation property with constant $M_{fs}$ if and only if, for any $l\in \N$, $(a_i u_{\pi(i)})_{i\ge l}$ does. 
\end{lemma}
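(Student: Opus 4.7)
The plan is to establish the equivalence via three separate stability claims for the FDSP, each preserving the constant $M_{fs}$: (a) termwise rescaling by nonzero scalars, $(u_i)_i \leftrightarrow (a_i u_i)_i$; (b) reindexing by a bijection, $(u_i)_i \leftrightarrow (u_{\pi(i)})_i$; and (c) tail truncation, $(u_i)_i \leftrightarrow (u_i)_{i \ge l}$. Composing (b), (a), (c) in succession yields the forward implication; applying the same three operations with $\pi^{-1}$ and $1/a_i$ in place of $\pi$ and $a_i$ gives the converse, so the two finite-dimensional separation constants must coincide.

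Claims (a) and (c) are essentially automatic. For (a), if $(u_{i_k})_k$ is basic with basis constant $\le M+\epsilon$ and satisfies \eqref{separation}, then $(a_{i_k} u_{i_k})_k$ is basic with the same constant (initial-segment projections merely get scaled by fixed nonzero scalars, which does not affect their operator norms) and has identical closed tail spans, so \eqref{separation} transfers verbatim. For (c), any basic subsequence of $(u_i)_i$ can be truncated to terms of index $\ge l$ by discarding finitely many initial terms; a subsequence of a basic sequence is basic with basis constant no larger than the original, and the separation inequality carries over by enlarging $j_F$ if necessary. The converse half of (c) is trivial, since a basic subsequence of $(u_i)_{i \ge l}$ is already a basic subsequence of $(u_i)_i$.

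The crux is (b). Given a basic subsequence $(u_{i_k})_k$ witnessing FDSP for $(Z, M_{fs}, \epsilon)$, set $n_k := \pi^{-1}(i_k)$. Since $\pi^{-1}$ is injective the $n_k$ are distinct naturals, so $\{n_k\colon k > K\}$ is unbounded for every $K$. Construct $k_1 < k_2 < \cdots$ greedily with $n_{k_1} < n_{k_2} < \cdots$ by using this unboundedness at each step. Put $m_s := n_{k_s}$. Then $u_{\pi(m_s)} = u_{i_{k_s}}$, and since $(k_s)_s$ is strictly increasing, $(u_{i_{k_s}})_s$ is a genuine index-preserving subsequence of the basic sequence $(u_{i_k})_k$, hence basic with basis constant $\le M_{fs}+\epsilon$. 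For separation, given a finite-dimensional $F \subset Z$ with its original $j_F$, pick $j_F'$ large enough that $k_s > j_F$ whenever $s > j_F'$; then $\overline{[u_{\pi(m_s)}\colon s>j_F']} \subseteq \overline{[u_{i_k}\colon k>j_F]}$, and \eqref{separation} follows.

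The obstacle this argument sidesteps is that an arbitrary permutation of a basic sequence need not be basic; the greedy Erdős--Szekeres-style extraction of a common strictly increasing sub-sequence in both the $k$-index and the $n_k$-value ensures we remain inside true subsequences of $(u_{i_k})_k$, at the price of discarding countably many terms of the original witness. This is the only non-routine ingredient; everything else is bookkeeping about span equalities, basis-constant monotonicity under subsequences, and the irrelevance of finitely many initial terms.
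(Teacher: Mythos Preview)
Your proof is correct. The paper itself does not provide a proof of this lemma, stating only that ``the proof is immediate and is left to the reader,'' so there is no argument to compare yours against; your decomposition into rescaling, reindexing, and tail truncation, with the Erd\H{o}s--Szekeres-type extraction to handle the bijection case, is a clean and complete way to fill in the details the authors omitted.
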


For our next result, we need the following technical lemmas; the second one is a  variant of \cite[Theorem 1.5.2]{AlKa2016}.

\begin{lemma}\cite[Lemma 1.5.1]{AlKa2016}\label{lemmabasicsubsequence219224} Let $S\subseteq X'$ be a subset such that $S$ is bounded below and $0\in \overline{S}^{w^{*}}$. Then, for every $\epsilon>0$ and every finite
dimensional subspace $F\subseteq X'$, there is $x'\in S$ such that for all $y'\in F$ and $b \in \mathbb{K}$, 
$$
\|y'\|\le (1+\epsilon)\|y'+b x'\|.
$$
\end{lemma}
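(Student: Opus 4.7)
The plan is to establish the bound $\|y'\|\le(1+\epsilon)\|y'+bx'\|$ uniformly in $(y',b)$ with a single choice of $x'\in S$, by splitting on the size of $|b|$ and handling each regime with a different tool. By homogeneity it suffices to consider $\|y'\|=1$, so the unit sphere $\Sigma_F:=\{y'\in F\colon \|y'\|=1\}$ is a compact set in a finite dimensional space, which is what will let a finite net suffice.

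First, I would exploit the fact that $S$ is bounded below: put $r:=\inf\{\|x'\|\colon x'\in S\}>0$ and $K:=2/r$. Then for \emph{any} $x'\in S$ and any $b$ with $|b|\ge K$, the triangle inequality yields
$$
\|y'+bx'\|\ge |b|\,\|x'\|-\|y'\|\ge Kr-1=1=\|y'\|,
$$
so the desired inequality holds trivially in this regime, regardless of which $x'\in S$ is selected. This decouples the large-$|b|$ case from the construction of $x'$, leaving me free to tailor $x'$ to the range $|b|\le K$.

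For the remaining range I would run a standard $\delta$-net plus weak-$*$ approximation argument. Fix $\delta,\eta>0$ to be chosen at the end, and pick a finite $\delta$-net $\{y'_1,\dots,y'_n\}\subseteq \Sigma_F$. By Hahn-Banach choose $x_i\in X$ with $\|x_i\|=1$ and $|y'_i(x_i)|\ge 1-\delta$ for each $i$. The set
$$
U:=\{z'\in X'\colon |z'(x_i)|<\eta,\ 1\le i\le n\}
$$
is a weak-$*$ neighborhood of $0$, so by the hypothesis $0\in \overline{S}^{w^*}$ we can select $x'\in S\cap U$. Given $y'\in\Sigma_F$, choose $y'_i$ with $\|y'-y'_i\|\le \delta$; then for $|b|\le K$,
$$
\|y'+bx'\|\ge |y'(x_i)+b\,x'(x_i)|\ge |y'_i(x_i)|-\|y'-y'_i\|-|b|\eta\ge 1-2\delta-K\eta.
$$
Choosing $\delta,\eta$ so small that $2\delta+K\eta\le \epsilon/(1+\epsilon)$ gives $\|y'+bx'\|\ge (1+\epsilon)^{-1}$ in this regime as well.

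The main obstacle, which the case split precisely resolves, is that weak-$*$ approximation only controls the functional $x'$ against finitely many test vectors and therefore cannot by itself tame arbitrarily large $|b|$; conversely, the lower bound on $\|x'\|$ gives uniform control for large $|b|$ but says nothing about small-$|b|$ near-parallelism between $x'$ and $F$. The constant $K=2/r$ is chosen so the two estimates match with the same $x'$, and the tolerances $\delta,\eta$ depend only on $K$, $\epsilon$, and $n=n(F,\delta)$, so the argument closes.
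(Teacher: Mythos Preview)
Your proof is correct and follows the standard Mazur-type argument that appears in Albiac--Kalton (the reference the paper cites for this lemma without reproducing a proof): split according to the size of $|b|$, handle large $|b|$ via the uniform lower bound on $\|x'\|$, and for bounded $|b|$ use a finite net in the unit sphere of $F$ together with norming vectors $x_i\in X$ to select $x'\in S$ in a suitable weak-$*$ neighborhood of $0$. One cosmetic remark: the invocation of Hahn--Banach for the $x_i$ is unnecessary, since $|y_i'(x_i)|\ge 1-\delta$ follows directly from the definition of the dual norm.
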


\vskip .1cm

\begin{lemma}\label{lemmabasic238382}
Let $X$ be a Banach space and $(u_i')_{i}\subseteq  X'$ a sequence such that $(u_i')_{i}$ is bounded below and  $0\in \overline{\{ u_i' \}}^{w^*}_{i\in\N}$.  Then, for any separable subspace $Z\subset X'$ and $\epsilon>0$ there is a basic subsequence $(u_{i_n}' )_{n}$ with basis constant no greater than $(1+\epsilon)$ satisfying the following: For any finite dimensional subspace $F\subset Z$ and every $\xi>0$, there is $j_{F,\xi}\in \N$ such that for all $y'\in F$ and $v'\in \overline{[u_{i_n}'\colon n> j_{F,\xi}]}$, 
$$
\|y'\|\le (1+\xi)\|y'+v' \|.
$$
In particular, $(u_i')_i$ has the finite dimensional separation property with constant $1$. 
\end{lemma}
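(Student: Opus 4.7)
The approach is an inductive construction of the subsequence using Lemma~\ref{lemmabasicsubsequence219224}, built against a fixed dense nested chain of finite-dimensional subspaces of $Z$. This is a standard ``shrinking the constants'' procedure, where each choice of $u'_{i_n}$ must separate a growing finite-dimensional subspace by a factor $(1+\epsilon_n)$, with $\prod_n (1+\epsilon_n) \le 1+\epsilon$.

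Concretely, fix $\epsilon>0$ and pick positive scalars $(\epsilon_n)_n$ with $\prod_n(1+\epsilon_n)\le 1+\epsilon$. Since $Z$ is separable, fix a countable dense sequence $(w'_k)_k\subseteq Z$ and set $W_n:=[w'_1,\dots,w'_n]$, so $\bigcup_n W_n$ is dense in $Z$. I construct indices $i_1<i_2<\cdots$ inductively: having chosen $i_1,\dots,i_{n-1}$, apply Lemma~\ref{lemmabasicsubsequence219224} to $S=\{u'_i\}_i$, the constant $\epsilon_n$, and the finite-dimensional subspace $F_n:=W_n+[u'_{i_1},\dots,u'_{i_{n-1}}]$ to obtain $i_n>i_{n-1}$ with
\begin{equation}\label{eqinductive}
\|y'\|\le (1+\epsilon_n)\|y'+b u'_{i_n}\|\quad \text{ for all }y'\in F_n,\ b\in\mathbb{K}.
\end{equation}
Iterating \eqref{eqinductive} in the usual way shows that for every $y'\in [u'_{i_1},\dots,u'_{i_{n-1}}]$ and every finitely supported $v'\in[u'_{i_k}:k\ge n]$ one has $\|y'\|\le \prod_{k\ge n}(1+\epsilon_k)\,\|y'+v'\|\le (1+\epsilon)\|y'+v'\|$, so $(u'_{i_n})_n$ is basic with basis constant at most $1+\epsilon$. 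The same iteration also yields, for every $n_0$, every $y'\in W_{n_0}$ and every $v'\in\overline{[u'_{i_k}:k\ge n]}$ with $n\ge n_0$,
\begin{equation}\label{eqtail}
\|y'\|\le \prod_{k\ge n}(1+\epsilon_k)\,\|y'+v'\|,
\end{equation}
by approximating $v'$ by finite combinations.

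It remains to transfer \eqref{eqtail} from $\bigcup W_n$ to a general finite-dimensional $F\subseteq Z$. Given such $F$ and $\xi>0$, fix an Auerbach basis $(y'_1,\dots,y'_d)$ of $F$ (so every $y'=\sum c_j y'_j\in F$ satisfies $|c_j|\le \|y'\|$), and use the density of $\bigcup_n W_n$ in $Z$ to pick $n_0$ and $\tilde y'_j\in W_{n_0}$ with $\|y'_j-\tilde y'_j\|\le \delta$, where $\delta>0$ is a small parameter to be fixed. Choose also $j_{F,\xi}\ge n_0$ large enough that $\prod_{k>j_{F,\xi}}(1+\epsilon_k)\le 1+\xi/3$. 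Then for $y'=\sum c_j y'_j\in F$ with $\|y'\|=1$ and $v'\in\overline{[u'_{i_k}:k>j_{F,\xi}]}$, set $\tilde y':=\sum c_j \tilde y'_j\in W_{n_0}$. By \eqref{eqtail} applied to $\tilde y'$ and two triangle inequalities,
$$
\|y'\|\le \|\tilde y'\|+d\delta\le (1+\xi/3)\|\tilde y'+v'\|+d\delta \le (1+\xi/3)\|y'+v'\|+ (2+\xi/3)d\delta,
$$
where I used $\|y'-\tilde y'\|\le \sum_j|c_j|\delta\le d\delta$. Choosing $\delta$ sufficiently small (depending on $d=\dim F$ and $\xi$) the error $(2+\xi/3)d\delta$ is smaller than $(2\xi/3)/(1+\xi)$, and a short case analysis on whether $\|y'+v'\|\ge 1/(1+\xi)$ shows this forces $\|y'\|\le (1+\xi)\|y'+v'\|$; the homogeneous inequality for arbitrary $y'\in F$ follows by rescaling. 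The ``in particular'' part is then immediate: given any separable $Z\subseteq X'$, any $\epsilon>0$, and any finite-dimensional $F\subseteq Z$, applying what we just proved with $\xi=\epsilon$ (and observing that the basic subsequence is independent of $F$) exhibits $(u'_{i_n})_n$ as a finite dimensional separating subsequence for $(Z,1,\epsilon)$, whence $M_{fs}\le 1$.

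The main technical obstacle is the last perturbation step: the inductive construction controls only $W_n+[u'_{i_1},\dots,u'_{i_{n-1}}]$, while the statement demands the inequality for every finite-dimensional $F\subseteq Z$, so one must carefully quantify how well $F$ is approximated by a subspace of some $W_{n_0}$ using an Auerbach-type basis to keep the approximation error uniform on the unit sphere of $F$.
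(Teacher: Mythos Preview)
Your proof is correct and follows essentially the same approach as the paper: both construct the subsequence inductively via Lemma~\ref{lemmabasicsubsequence219224} against a growing chain of finite-dimensional subspaces containing a dense subset of $Z$ together with the previously chosen vectors, then pass to an arbitrary finite-dimensional $F\subset Z$ by a density/perturbation argument (the paper simply invokes ``standard density arguments'' where you spell out the Auerbach-basis computation). One small imprecision: to guarantee $i_n>i_{n-1}$ you must apply Lemma~\ref{lemmabasicsubsequence219224} to the tail set $\{u_i':i>i_{n-1}\}$ rather than to all of $\{u_i'\}_i$, which requires the observation (made explicit in the paper) that $0$ remains in the weak$^*$ closure of every tail since each $u_i'\neq 0$.
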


\begin{proof}
Choose a dense sequence $(v_i')_{i}$ in $Z$ and a sequence of positive scalars $(\epsilon_i)_{i}$ so that $\prod\limits_{i=1}^{\infty}(1+\epsilon_i)\le (1+\epsilon)$. Since $0\in \overline{\{ u_i'
\}}^{w^*}_{i\in\N}$ and $u_i'\not=0$ for every $i\in \N$, it follows that $0\in \overline{\{ u_i'
\}}^{w^*}_{i\ge l}$ for every $l\in \N$. Thus, by Lemma~\ref{lemmabasicsubsequence219224}, we can
choose $i_2>i_1:=1$ so that for all $y'\in [v_{i_1}', u_{i_1}']$ and all $b\in \mathbb{K}$,
$$
\|y'\|\le (1+\epsilon_1)\|y'+b  u_{i_2}' \|.
$$
By an inductive argument, we obtain a strictly increasing sequence of positive integers $\{
i_n\}_{n\in\N}$ such that for all  $y'\in [v_s', u_{s}'\colon 1\le s\le i_n]$, $b\in \mathbb{K}$
and $j\in\N$,
$$
\|y'\|\le (1+\epsilon_n)\|y'+b  u_{i_{n+1}}'\|.
$$
Then, for any  positive integers $j< l$, any $y'\in [v_s', u_s'\colon 1\le s\le i_{j}]$ and any scalars $(a_n)_{j <n\le l}$,
\begin{equation}
\|y'\|\le \prod\limits_{n=j}^{l-1}(1+\epsilon_{n})\|y'+\sum\limits_{n=j+1}^{l}a_n u_{i_n}'\|
\le \prod\limits_{n=j}^{\infty}(1+\epsilon_{n}) \|y'+\sum\limits_{n=j+1}^{l}a_n
u_{i_n}'\|.\nonumber
\end{equation}
In particular, $(u_{i_n}')_{n}$ is basic with basis constant no greater than
$\prod\limits_{n=1}^{\infty}(1+\epsilon_n)\le 1+\epsilon$, and the result holds for $F \subset [ v_i'\colon 1\le i\le n]$ for some $n\in \N$. Now, standard density arguments allow us to obtain the result for any finite dimensional subspace of $Z=\overline{[v_i'\colon i\in\N]}$.
\end{proof}

\begin{remark}\label{remarkbasic323737} Suppose that there is  a sequence $(u_i)_{i}\subseteq X$ such that $(u_i)_{i}$ is bounded below and $0\in \overline{\{ u_i \}_{i\in\N}}^{w}$. Via the canonical injection $X\hookrightarrow X''$, Lemma~\ref{lemmabasic238382} remains valid for any separable
subspace $Z\subseteq X$ where the basic sequence $(u_{i_n})_n$ is a subsequence of $(u_i)_{i}$.
\end{remark}

Next, we consider the case in which $0$ may not be a weak or a weak star accumulation point of the sequence. We use $\widehat x$ to denote $x\in X$ as an element of the bidual space $X''$, and we use $\widehat X$ to denote $X$ as a subspace of $X''$. Also, for a bounded sequence  $(u_i)_{i}\subseteq  X$, we  will consider $\beta((u_i)):=\overline{\{\widehat{u}_i\}}^{w*}_{i\in\N}\setminus \widehat{X}$.

\begin{lemma} \label{lemmanewbasic}
Let $X$ be a Banach space and $(u_i)_{i}\subseteq  X$ a bounded sequence such that  $\overline{\{u_i\}}^{w}_{i\in\N}$ is not weakly compact. Then, $(u_i)_i$ has the finite dimensional separation property with constant $M_{fs}\le M$, where
\begin{equation}
M:=\Big(2+\inf_{x''\in \beta((u_i))} \big\{ \frac{\|x''\|}{\dist{(x'',\widehat{X})}}\big\}\Big)^2.\nonumber
\end{equation}

 \end{lemma}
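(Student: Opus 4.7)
The plan is to transfer the problem to the bidual $X''$ and apply Lemma~\ref{lemmabasic238382} there. Fix $\epsilon>0$ and a separable subspace $Z\subset X$; enlarging $Z$ if necessary, assume $\{u_i:i\in\N\}\subset Z$. Pick $x''\in\beta((u_i))$ such that $\alpha:=\|x''\|/d$ is within $\epsilon$ of the infimum in the definition of $M$, where $d:=\dist(x'',\widehat X)>0$. Consider the sequence $v_i:=\widehat u_i-x''\in X''$. Since $\widehat u_i\in\widehat X$ we have $\|v_i\|\ge d$, so $(v_i)$ is bounded below; and since $x''$ is a $w^*$-accumulation point of $\{\widehat u_i\}$ in $X''$, the origin is a $w^*$-accumulation point of $\{v_i\}$ in $X''$, viewed as the dual of $X'$. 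Applied to $(v_i)$ and the separable subspace $\widetilde Z:=\widehat Z+[x'']\subset X''$, Lemma~\ref{lemmabasic238382} yields a subsequence $(v_{i_n})$ with the stated separation property in $X''$.

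The heart of the proof is to translate this bidual separation into the FDSP for $(u_{i_n})$ in $X$. Given a finite-dimensional $F\subset Z$, set $\tilde F:=\widehat F+[x'']\subset\widetilde Z$. For $y\in F$ and a finite tail combination $z=\sum_n a_n u_{i_n}$ (indices above the threshold), let $c:=\sum_n a_n$, $y'':=\widehat y+cx''\in\tilde F$, and $v'':=\sum_n a_n v_{i_n}$. A direct computation gives $v''=\widehat z-cx''$, so the $x''$-components of $y''$ and $v''$ cancel to yield $y''+v''=\widehat{y+z}$; the Lemma therefore gives
\[
\|\widehat y+cx''\|\le(1+\xi)\|y+z\|.
\]
The decisive analytic estimate is $\|\widehat y\|\le(1+\alpha)\|\widehat y+cx''\|$, obtained as follows: since $\widehat y\in\widehat X$ and $\dist(cx'',\widehat X)=|c|d$, we have $\|\widehat y+cx''\|\ge|c|d$, whence $|c|\,\|x''\|\le\alpha\|\widehat y+cx''\|$; the triangle inequality $\|\widehat y\|\le\|\widehat y+cx''\|+|c|\,\|x''\|$ then yields the claim. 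Combining gives $\|y\|\le(1+\alpha)(1+\xi)\|y+z\|$, and by density this extends to all $z$ in the closed tail span.

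Basicness of $(u_{i_n})$ in $X$ with the same constant follows by applying the same argument with $F:=[u_{i_k}:k\le m]$: the inductive construction underlying the proof of Lemma~\ref{lemmabasic238382} shows that the choice $j_{\tilde F,\xi}=m$ is permissible, because $\tilde F=\widehat F+[x'']$ lies in the span of the elements accumulated by step $m$ (using $\widehat u_{i_k}=v_{i_k}+x''$). Choosing $\xi$ small so that $(1+\alpha)(1+\xi)\le 1+\alpha+\epsilon$ and invoking the elementary inequality $1+\alpha\le(2+\alpha)^2$ (valid for all $\alpha\ge 0$) gives $M_{fs}\le M$ as claimed.

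The main technical obstacle will be engineering the identity $y''+v''=\widehat{y+z}$: matching the $x''$-coefficients of $y''$ and $v''$ is precisely what lets a norm estimate in $X''$ descend to a genuine inequality in $X$. A secondary difficulty is the index alignment between the tail threshold $j_{\tilde F,\xi}$ produced by the bidual construction and the subsequence index needed for basicness of $(u_{i_n})$ in $X$; this is handled by exploiting the inductive structure already present in the proof of Lemma~\ref{lemmabasic238382}.
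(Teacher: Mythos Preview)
Your proof is correct and takes a genuinely different (and in fact sharper) route than the paper. The paper builds, via two Hahn--Banach extensions, a pair of mutually inverse operators $T,L$ on $X''$ with $T(\widehat u_{i_k}-x_0'')=\widehat u_{i_k}$ (for large $k$); the FDSP estimate then comes from $\|T\|\|L\|\le(2+\alpha+\xi)^2$ together with the $(1+\xi)$-separation of $(\widehat u_{i_k}-x_0'')$ from Lemma~\ref{lemmabasic238382}. Your approach skips the operator construction entirely: by setting $c=\sum a_n$ and $y''=\widehat y+cx''\in\tilde F$ you force the identity $y''+\sum a_n v_{i_n}=\widehat{y+z}\in\widehat X$, so Lemma~\ref{lemmabasic238382} applied in $X''$ gives $\|y''\|\le(1+\xi)\|y+z\|$, and the elementary projection estimate $\|\widehat y\|\le(1+\alpha)\|\widehat y+cx''\|$ (from $\|\widehat y+cx''\|\ge|c|\dist(x'',\widehat X)$) finishes. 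This yields $M_{fs}\le 1+\alpha$, strictly better than the paper's $(2+\alpha)^2$; the final inequality $1+\alpha\le(2+\alpha)^2$ is of course wasteful but matches the stated lemma.

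One point that deserves to be made explicit rather than glossed: for the basicness alignment ``$j_{\tilde F,\xi}=m$'' you are appealing to the \emph{proof} of Lemma~\ref{lemmabasic238382}, not its statement. For that inductive structure to give what you need, you must choose the dense sequence in $\widetilde Z=\widehat Z+[x'']$ to begin with $x''$ (e.g., take $v_1^{\mathrm{dense}}=x''$); then $\tilde F=[\widehat u_{i_k}:k\le m]+[x'']=[v_{i_k}:k\le m]+[x'']$ is indeed contained in the span accumulated by step $m$, and the tail estimate with product $\prod_{n\ge m}(1+\epsilon_n)\le 1+\xi$ applies. With that clarification, your basicness argument is complete and yields the same constant $(1+\alpha)(1+\xi)$.
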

\begin{proof}
Since $\overline{\{u_i\}}^{w}_{i\in\N}$ is not weakly compact but  $\overline{\{\widehat{u}_i\}}^{w*}_{i\in\N}$ is weak star compact, there is $x''\in \beta((u_i))= \overline{\{\widehat{u}_i\}}^{w*}_{i\in\N}\setminus \widehat{X}$, so $M$ is well defined. Given $\epsilon>0$ and $Z\subset X$ a separable subspace, choose $0<\xi<1$ and $x_0''\in  \overline{\{\widehat{u}_i\}}^{w*}_{i\in\N}\setminus \widehat{X}$ so that
\begin{equation}
M+\xi+\xi^2+2\xi (M+\xi)\le \frac{M+\epsilon}{1+\xi},\label{xi}
\end{equation}
and 
\begin{equation}
(2+\frac{\|x_0''\|}{\dist{(x''_0,\widehat{X})}})^2\le M+\xi.\label{closetoM}
\end{equation}
Let $Z_1:=\widehat{Z}+\overline{[x_0'', \widehat{u_i}\colon i\in \N]}$, and consider in $Z_1$ the seminormalized sequence $(\widehat{u}_i-x_0'')_i$.  Then, there exists a 
basic subsequence $(\widehat{u}_{i_k}-x_0'')_{k}$ with basic constant no greater that $(1 + \xi$) satisfying the conclusions of  Lemma~\ref{lemmabasic238382}.  
Since $x_0''\not \in \widehat{X}$, there is a bounded linear functional $x_1'''$ on $\widehat{X}\oplus [x_0'']$ such that for all $x\in X$ and all $b\in \mathbb{K}$,  
$$
x_1'''(\widehat{x}+b x_0'')=b. 
$$
Suppose that $\|\widehat{x}+b x_0''\|=1$ with $b\not=0$.  Then, 
\begin{align*}
|x_1'''(\widehat{x}+b x_0'')|=&|b|=|b|\frac{\|b^{-1}\widehat{x}+x_0''\|}{\|b^{-1}\widehat{x}+x_0''\|}=\frac{1}{\|b^{-1}\widehat{x}+x_0''\|}\le\frac{1}{\dist{(x_0'',\widehat{X})}}.
\end{align*}
It follows that
\begin{equation}
\|x_1'''\|\le  \frac{1}{\dist{(x_0'',\widehat{X})}}.\label{firstbound}
\end{equation}
By the Hahn--Banach Theorem, there is a norm-preserving extension of $x_1'''$ to $X''$, which we also call $x_1'''$.  
Let $F_1:=[x_0'']$. By the choice of $(\widehat{u}_{i_k}-x_0'')_k$, there exists $j_{F_1,\xi}\in\N$ such that for all  $z''\in \overline{[\widehat{u}_{i_k}-x_0''\colon k> j_{F_1,\xi}]}$  we have
\begin{equation}
\|x_0''\|\le (1+\xi) \|x_0''+z''\|. \label{far1}
\end{equation}
In particular, given that $x_0''\not=0$ this implies that $x_0''\not \in \overline{[\widehat{u}_{i_k}-x_0''\colon k> j_{F_1,\xi}]}$. Thus, there is a bounded linear functional $x_2'''$ on $\overline{[\widehat{u}_{i_k}-x_0''\colon k> j_{F_1,\xi}]}\oplus [x_0'']$ defined, for all $z''\in \overline{[\widehat{u}_{i_k}-x_0''\colon k> j_{F_1,\xi}]}$ and all  $b \in \mathbb{K}$, by  
$$
x_2'''(z''+b x_0'')=b. 
$$
As before, for any $z''\in \overline{[\widehat{u}_{i_k}-x_0''\colon k> j_{F_1,\xi}]}$ and $b\not=0$ such that  $\|z''+b x_0''\|=1$, we have
\begin{align*}
|x_2'''(z''+b x_0'')|=&|b| =\frac{1}{\|b^{-1}z''+x_0''\|}\le \frac{1+\xi}{\|x_0''\|},&\text{by }\eqref{far1}.
\end{align*}
Thus, 
\begin{equation}
\|x_2'''\|\le  \frac{1+\xi}{\|x''_0\|}.\label{secondbound}
\end{equation}
Again, by the Hahn--Banach Theorem, we may consider $x_2'''$ defined on $X''$. Now define, for $x'' \in X''$,  the following bounded linear operators: 
\begin{align*}
T(x''):=&x''+(x_2'''-x_1''')(x'')x_0'';\\
L(x''):=&x''-(x_2'''-x_1''')(x'')x_0''.
\end{align*}
By \eqref{firstbound} and \eqref{secondbound} we get 
$$
\|T(x'')\|\le \|x''\|+\|x_2'''\|\|x''\|\|x_0''\|+\|x_1'''\|\|x''\|\|x_0''\|\le (2+\xi+\frac{\|x_0''\|}{\dist{(x_0'',\widehat{X})}})\|x''\|, 
$$
Hence, $\displaystyle \|T\|\le 2+\xi+\frac{\|x_0''\| }{\dist{(x_0'',\widehat{X})}}$ and the same bound holds for $L$. 

From this, \eqref{xi} and \eqref{closetoM} we get
\begin{align}
\|T\|\|L\|\le&  \big( 2+\xi+\frac{\|x_0''\| }{\dist{(x_0'',\widehat{X})}}\big)^2=\big(2+\frac{\|x_0''\| }{\dist{(x_0'',\widehat{X})}}\big)^2+\xi^2+2\xi \big(2+\frac{\|x_0''\| }{\dist{(x_0'',\widehat{X})}}\nonumber\\
\le& M+\xi +\xi^2+2\xi (M+\xi) \le \frac{M+\epsilon}{1+\xi}\label{thirdbound}.
\end{align}
It is easy to check that $T$ and $L$ are inverses of each other. Then, since  $T(\widehat{u}_{i_k}-x_0'')=\widehat{u}_{i_k}$ for all $k> j_{F_1,\xi}$ and  $(\widehat{u}_{i_k}-x_0'')_{k}$ is a basic sequence with basis constant no greater than $(1+\xi)$, it follows that $(u_{i_k})_{k>  j_{F_1,\xi}}$ is a basic sequence with basis constant 
$$
K_b((u_{i_k})_{k>  j_{F_1,\xi}})\le \|T\|\|L\|(1+\xi)\le M+\epsilon, 
$$
where the last inequality follows from \eqref{thirdbound}. Now let $F\subset Z$ be a finite dimensional subspace, and let $j_F:=\max{\{j_{L(\widehat{F}),\xi},j_{F_1,\xi}\}}$.  For any $x\in F$ and scalars $(a_k)_{j_F< k\le m}$, by the choice of $(\widehat{u}_{i_k}-x_0'')$ we have 
\begin{align*}
\|x\|=&\|\widehat{x}\|\le \|T\|\|L(\widehat{x})\|\le \|T\|(1+\xi)\|L(\widehat{x})+\sum\limits_{j_F< k\le m}a_k (\widehat{u}_{i_k}-x_0'')\|\\
=&\|T\|(1+\xi)\|L(\widehat{x}+\sum\limits_{j_F< k\le m}a_k \widehat{u}_{i_k})\|\le \|T\|\|L\|(1+\xi)\|\widehat{x}+\sum\limits_{j_F < k\le m}a_k \widehat{u}_{i_k})\|\\
\le& (M+\epsilon) \|x +\sum\limits_{j_F< k\le m}a_k u_{i_k}\|, 
\end{align*}
where we apply again \eqref{thirdbound} to obtain the last inequality. By a density argument, it follows that $(u_{i_k})_{k> j_{F_1,\xi}}$ has the desired properties. 
\end{proof}
\begin{remark}
Note that the upper bound for $M_{fs}$ given by Lemma~\ref{lemmanewbasic} remains unchanged if one replaces $(u_i)_i$ with $(a_iu_i)_i$, for any seminormalized sequence $(a_i)_i$.
\end{remark}
\begin{corollary}\label{corollarywhenFDSP}Let $(u_i)_i$ be a seminormalized sequence. The following are equivalent: 
\begin{enumerate}[\upshape (i)]
\item \label{basicsub} $(u_i)_i$ has a basic subsequence. 
\item \label{notcompactnozero} Either $0\in \overline{\{u_i\}}^{w}_{i\in\N}$, or $\overline{\{ u_i\}}^{w}_{i\in\N}$ is not weakly compact. 
\item \label{FDSP} $(u_i)_i$ has the finite dimensional separation property. 
\end{enumerate}
\end{corollary}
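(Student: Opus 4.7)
The plan is to treat the three implications separately; almost all of the work has already been done in the preceding lemmas, and the only non-routine point is a classical observation about weak limits of basic sequences used in (i) $\Rightarrow$ (ii).

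For (iii) $\Rightarrow$ (i): this is immediate from Definition~\ref{definitionseparation}, which explicitly furnishes a basic subsequence of basis constant at most $M_{fs}+\epsilon$.

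For (ii) $\Rightarrow$ (iii): I split into the two cases of (ii). If $0\in\overline{\{u_i\}}^{w}_{i\in\N}$, then since $(u_i)_i$ is seminormalized it is in particular bounded below, so Remark~\ref{remarkbasic323737} (which adapts Lemma~\ref{lemmabasic238382} via the canonical injection $X\hookrightarrow X''$) gives the FDSP with $M_{fs}\le 1$. If instead $\overline{\{u_i\}}^{w}_{i\in\N}$ is not weakly compact, $(u_i)_i$ is bounded (being seminormalized) and Lemma~\ref{lemmanewbasic} applies directly, yielding the FDSP with the explicit bound on $M_{fs}$ recorded there.

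For (i) $\Rightarrow$ (ii): I argue by contradiction. Assume $(u_i)_i$ admits a basic subsequence $(u_{j_l})_l$, yet $0\notin\overline{\{u_i\}}^{w}_{i\in\N}$ and $\overline{\{u_i\}}^{w}_{i\in\N}$ is weakly compact. Then $(u_{j_l})_l$ lies in this weakly compact set, so Eberlein--\v{S}mulian provides a sub-subsequence $(u_{j_{l_k}})_k$ converging weakly to some $v\in\overline{\{u_i\}}^{w}_{i\in\N}$; by hypothesis $v\neq 0$. Since $(u_{j_{l_k}})_k$ is still a basic sequence, Mazur's theorem places $v$ in the norm-closed convex hull of $\{u_{j_{l_k}}:k\in\N\}$, hence in $Y:=\overline{[u_{j_{l_k}}:k\in\N]}$. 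Expanding $v$ in the Schauder basis $(u_{j_{l_k}})_k$ of $Y$ as $v=\sum_k c_k u_{j_{l_k}}$, the coefficients are $c_k=y_k'(v)$ with $y_k'\in Y'$ the biorthogonal functionals. Extending each $y_k'$ to $X'$ by Hahn--Banach and using the weak convergence $u_{j_{l_m}}\to v$, one gets $c_k=\lim_m y_k'(u_{j_{l_m}})=\lim_m\delta_{km}=0$ for every $k$, so $v=0$, contradicting $v\neq 0$.

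The main obstacle is exactly this last step: the classical fact that a basic sequence which converges weakly must converge weakly to $0$. It requires only Mazur plus Hahn--Banach extension of the biorthogonal functionals, but it is the only ingredient in (i) $\Rightarrow$ (ii) not already packaged inside the section's lemmas. Once it is in place, Eberlein--\v{S}mulian closes the argument.
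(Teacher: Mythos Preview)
Your proof is correct and follows essentially the same route as the paper for (ii) $\Rightarrow$ (iii) and (iii) $\Rightarrow$ (i). The only difference is in (i) $\Rightarrow$ (ii): the paper simply cites the classical Kadets--Pe\l czy\'nski result (see \cite{KP1965} or \cite[Theorem 1.5.6]{AlKa2016}) for the full equivalence (i) $\Leftrightarrow$ (ii), whereas you supply a self-contained argument for the needed implication using Eberlein--\v{S}mulian together with the fact that a weakly convergent basic sequence must have weak limit $0$. Your argument is sound (and in fact the Mazur step can be shortened: $Y$ is a norm-closed subspace, hence weakly closed, so $v\in Y$ directly), and it has the minor advantage of making the corollary independent of the external reference.
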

\begin{proof}
The equivalence \ref{basicsub} $\Longleftrightarrow$ \ref{notcompactnozero} was proven in~\cite{KP1965} (see also \cite[Theorem 1.5.6]{AlKa2016}).\\
By Remark~\ref{remarkbasic323737} and Lemma~\ref{lemmanewbasic} it follows that \ref{notcompactnozero} $\Longrightarrow$ \ref{FDSP}. 
Finally, \ref{FDSP} $\Longrightarrow$ \ref{basicsub} is clear. 
\end{proof}

Next we study the finite dimensional separation property of Markushevich bases, and give upper bounds for its constant in this context. Recall that for $0<c\le 1$, a subspace $S\subset X'$ is said to be $c$-norming for $X$ if 
$$
c\|x\|\le \sup_{\substack{x'\in S\\ \|x'\|=1}}|x'(x)|\quad\forall x\in X. 
$$
We will use the following result. 
\begin{lemma}\label{lemmanorming}\cite[Proposition	3.2.3]{AlKa2016}
Let $(x_i)_i$ be a Schauder basis for $X$ with basis constant $K_b$. Then $\overline{[x_i'\colon i\in \N]}\subset X'$ is $K_b^{-1}$-norming for $X$. 
\end{lemma}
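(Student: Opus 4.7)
The plan is to realize norming functionals in $\overline{[x_i':i\in\N]}$ as the images of a single Hahn--Banach norming functional under the adjoints of the partial sum projections. Concretely, I would write $S_n\colon X\to X$ for the $n$-th partial sum projection $S_n(y)=\sum_{i=1}^n x_i'(y)x_i$, which by definition of $K_b$ satisfies $\|S_n\|\le K_b$, and let $S_n^*\colon X'\to X'$ be its Banach-space adjoint, so $\|S_n^*\|\le K_b$ as well. A direct computation would then yield the formula $S_n^*(\phi)=\sum_{i=1}^n \phi(x_i)\,x_i'$, so $S_n^*(\phi)\in [x_i':1\le i\le n]\subset\overline{[x_i':i\in\N]}$ for every $\phi\in X'$.

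Given $x\in X$ with $x\neq 0$, the first step is to pick, via Hahn--Banach, a functional $\phi\in X'$ with $\|\phi\|=1$ and $\phi(x)=\|x\|$. Because $(x_i)_i$ is a Schauder basis, $S_n(x)\to x$, and hence
\[
S_n^*(\phi)(x)=\phi(S_n(x))\longrightarrow \phi(x)=\|x\|.
\]
For $n$ large enough that the quantity above is positive I would set $\psi_n:=S_n^*(\phi)/\|S_n^*(\phi)\|$, a unit vector in $\overline{[x_i':i\in\N]}$, and invoke $\|S_n^*(\phi)\|\le K_b$ to obtain
\[
|\psi_n(x)|=\frac{S_n^*(\phi)(x)}{\|S_n^*(\phi)\|}\ge\frac{S_n^*(\phi)(x)}{K_b}\xrightarrow[n\to\infty]{}\frac{\|x\|}{K_b}.
\]
Taking the supremum in $n$ then yields $\sup_{x'\in\overline{[x_i':i\in\N]},\,\|x'\|=1}|x'(x)|\ge K_b^{-1}\|x\|$, which is precisely the $K_b^{-1}$-norming condition.

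There is no real conceptual obstacle here, since this is essentially the standard textbook proof that the closed span of the biorthogonal coefficient functionals of a Schauder basis is norming with constant $K_b^{-1}$. The only bookkeeping point is to verify that each $S_n^*(\phi)$ actually lies in the finite span $[x_i':1\le i\le n]$, which follows immediately from the explicit formula for $S_n$, so the normalized vectors $\psi_n$ are genuine competitors for the supremum defining the norming constant.
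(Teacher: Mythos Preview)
Your proof is correct and is essentially the standard argument one finds in the reference cited. Note that the paper does not supply its own proof of this lemma; it simply quotes \cite[Proposition 3.2.3]{AlKa2016}, whose proof proceeds exactly as you outline (Hahn--Banach a norming functional for $x$, push it through the adjoint of the partial sum projection, and use $\|S_n^*\|\le K_b$).
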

Also recall that a sequence $(v_i)_i$ is a \emph{block basis} of a Markushevich basis $(x_k)_k$ if there are  sequences of positive integers $(n_i)_i$, $(m_i)_i$ with $n_i\le m_i<n_{i+1}$ for all $i$ and scalars $(b_k)_k$ such that
$$
v_i=\sum\limits_{k=n_i}^{m_i}b_kx_k,
$$
with at least one nonzero $b_k$ for each $i\in \N$. In particular, any subsequence of a Markushevich basis is a block basis of it. 

\begin{proposition}\label{propositionseparation} Let $(v_i)_i\subset X$ be a  block basis of a Markushevich basis $(y_k)_k$ for $Y\subset X$ with biorthogonal functionals $(y_k')_k$, and let $(a_i)_i$ be a scalar sequence such that $(z_i:=a_iv_i)_i$ is seminormalized. The following hold: 
\begin{enumerate}[\upshape (a)]
\item \label{Markushevich} Either $\overline{\{ z_i \}}^{w}_{i\in\N}$ is not weakly compact, or $(z_i)_i$ is weakly null. Hence, $(v_i)$ has the finite dimensional separation property, with the same constant as $(z_i)_i$.
\item \label{weakzero} If either $0\in \overline{\{z_i \}}^{w}_{i\in\N}$ or $X$ is a dual space and $0\in \overline{\{ z_i \}}^{w*}_{i\in\N}$, then $M_{fs}=1$. 
\item \label{noncompact} If $\overline{\{z_i \}}^{w}_{i\in\N}$ is not weakly compact, then 
\begin{equation}
M_{fs} \le \Big( 2+\inf \Big\{ \frac{\|x''\|}{\dist{(x'',\widehat{X})}}\colon x''\in \overline{\{\widehat{z}_i\}}^{w*}_{i\in\N}\setminus \widehat{X}\Big\}\Big)^2.\nonumber
\end{equation}
\item \label{norming} If $Y=X$ and $\overline{[y'_k \colon k\in \N]}$ is $c$-norming, then $M_{fs}\le c^{-1}$.  

\item \label{schauder} If $Y=X$ and $(y_k)_k$ is a Schauder basis for $X$ with constant $K_b$, then $M_{fs}\le K_{b}$. 
\end{enumerate}
\end{proposition}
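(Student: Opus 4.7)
The plan is to prove each part in order, combining the weak compactness dichotomy for block bases with the technical lemmas above.

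For part (a), the crucial observation is that each biorthogonal functional $y_k'$ detects the block structure: since $v_i$ has support in $[n_i,m_i]$ with $n_i\to\infty$, we have $y_k'(z_i)=a_iy_k'(v_i)=0$ as soon as $n_i>k$. If $\overline{\{z_i\}}^w$ is weakly compact then, by Eberlein--\v Smulian, every subsequence of $(z_i)$ has a weakly convergent sub-subsequence $z_{i_j}\to w$; since $Y$ is norm-closed (hence weakly closed), $w\in Y$, and $y_k'(w)=\lim_j y_k'(z_{i_j})=0$ for every $k$. Totality of $(y_k')_k$ on $Y$ forces $w=0$, so every weak subsequential limit is $0$ and hence $z_i\to 0$ weakly. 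Applying Remark~\ref{remarkbasic323737} in the weakly null case and Lemma~\ref{lemmanewbasic} otherwise yields the FDSP for $(z_i)$, and Lemma~\ref{lemmaordering} transfers it (with the same constant) to $(v_i)=(a_i^{-1}z_i)$. Parts (b) and (c) are then immediate: (b) follows from Remark~\ref{remarkbasic323737} (or from Lemma~\ref{lemmabasic238382} applied directly in the predual when $X$ is a dual and $0\in\overline{\{z_i\}}^{w^*}$), while (c) is a direct application of Lemma~\ref{lemmanewbasic}.

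Part (d) requires a direct construction, since neither (b) nor (c) produces the bound $c^{-1}$. Fix $\epsilon>0$ and a dense sequence $(x_n)_n$ in $Z$, and set $F_l:=[x_1,\ldots,x_l]$. For each $x\in X$ and $\eta>0$, the $c$-norming hypothesis yields $y'\in\overline{[y_k']}$ with $\|y'\|\le 1$ and $|y'(x)|\ge(c-\eta)\|x\|$, and this $y'$ is approximated in norm by a finitely supported $y^*\in[y_k':k\le K]$ with $\|y^*\|\le 1+\eta$ and $|y^*(x)|\ge(c-2\eta)\|x\|$. The subsequence $(v_{i_l})_l$ is built inductively: after $v_{i_1},\ldots,v_{i_l}$ are chosen, pick a finite $\delta$-net of the compact set $S_{F_l}\cup S_{[v_{i_1},\ldots,v_{i_l}]}$ and let $K_l$ be large enough to contain the supports of the corresponding $y^*$'s for every net point; then select $i_{l+1}$ with $n_{i_{l+1}}>K_l$, so that every such $y^*$ annihilates $v_{i_m}$ for $m>l$ and, by continuity, every element of $\overline{[v_{i_m}:m>l]}$. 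The estimate
\[
(1+\eta)\|x+z\|\;\ge\;|y^*(x+z)|\;=\;|y^*(x)|\;\ge\;(c-2\eta)\|x\|-(1+\eta)\delta,
\]
valid for $x$ within $\delta$ of a net point, delivers (after choosing $\eta,\delta$ sufficiently small) both the basis-constant bound $\le c^{-1}+\epsilon$ (taking $x\in[v_{i_1},\ldots,v_{i_l}]$) and the separation inequality~\eqref{separation} with $M=c^{-1}$ for $F=F_l$. Density of $(x_n)_n$ in $Z$ combined with the finite-dimensionality of an arbitrary $F\subset Z$ extends the latter to every such $F$ by uniform approximation on $B_F$.

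Part (e) is then immediate: by Lemma~\ref{lemmanorming} the subspace $\overline{[y_k']}$ is $K_b^{-1}$-norming, and (d) gives $M_{fs}\le K_b$. The hardest step is (d): without the bounded truncation projections available for a Schauder basis, one has to approximate the $c$-norming functionals by finitely supported ones and control their supports uniformly over a compact family of test vectors, while the inductive/diagonal construction must simultaneously realize the basis-constant bound and the separation property for every finite-dimensional $F\subset Z$ through a single subsequence.
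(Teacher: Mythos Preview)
Your proofs of parts \ref{Markushevich}, \ref{weakzero}, \ref{noncompact}, and \ref{schauder} are correct and follow essentially the same route as the paper (you use Eberlein--\v Smulian where the paper uses subnets, but this is immaterial).

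For part \ref{norming} you take a genuinely different path. The paper does not build the subsequence from scratch: it simply invokes part \ref{Markushevich} to get \emph{some} basic subsequence $(v_{i_l})_l$, and then, for an arbitrary finite-dimensional $F\subset X$, picks a $\xi$-net $(u_j)$ of $S_F$, finds finitely supported unit functionals $u_j'\in[y_k':k\le m_2]$ with $|u_j'(u_j)|\ge c(1-\xi)$, and observes that any $v\in\overline{[v_{i_l}:l>m_2]}$ lies in $\overline{[y_k:k>m_2]}$ and is therefore annihilated by each $u_j'$. The inequality
\[
1\le c^{-1}(1-\xi)^{-1}|u_j'(u_j+v)|\le c^{-1}(1-\xi)^{-1}\big(\|x+v\|+\xi\big)
\]
then yields separation with constant $c^{-1}+\epsilon$ after choosing $\xi$ appropriately. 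Thus in the paper the subsequence is fixed once and the work is done entirely on the $F$-side; there is no diagonalization over a dense sequence in $Z$.

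Your inductive construction is more elaborate but has the virtue of making the basis-constant requirement in the FDSP definition explicit: you arrange $n_{i_{l+1}}>K_l$ so that the functionals witnessing $S_{[v_{i_1},\dots,v_{i_l}]}$ annihilate the tail, which gives $K_b((v_{i_l}))\le c^{-1}+\epsilon$ directly. The paper is terse on this point (it just writes ``$(v_i)$ has a basic subsequence'') and leaves to the reader the observation that the same norming-functional estimate, applied to $F=[v_{i_1},\dots,v_{i_n}]$, allows one to pass to a further subsequence with the correct basis constant. So your approach trades brevity for a self-contained treatment of both halves of the FDSP definition in one sweep.
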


\begin{proof}
To prove \ref{Markushevich}, suppose that $\overline{\{ z_i \}}^{w}_{i\in\N}$ is weakly compact. Then, since $Y$ is weakly closed, given a subnet $(z_{i_\lambda})$ there is a further subnet $(z_{i_{\lambda_\theta}})$ and $v_0\in Y$ such that
$$
z_{i_{\lambda_\theta}}\xrightarrow{w}v_0.
$$
Since $(z_i)_i$ is a block basis of $(y_k)_k$, it follows that $
y_k'(v_0)=0$ for all $k\in \N$, so $v_0=0$. Thus, $(z_i)_i$ is weakly null. It follows by Corollary~\ref{corollarywhenFDSP} that $(z_i)_i$ has the finite dimensional separation property, and by Lemma~\ref{lemmaordering}, so does $(v_i)_i$, with the same constant. \\
Lemma \ref{lemmabasic238382} and Remark~\ref{remarkbasic323737} imply \ref{weakzero}, and \ref{noncompact} follows by Lemma~\ref{lemmanewbasic}.\\
To prove \ref{norming}, note that by \ref{Markushevich}, $(v_i)_i$ has a basic subsequence $(v_{i_l})_l$. Let $F\subset X$ be a finite dimensional subspace, and fix $0<\epsilon<1$. Choose $0<\xi<1$ so that
$$
0<\frac{c^{-1}(1-\xi)^{-1}}{1-c^{-1}(1-\xi)^{-1}\xi}\le c^{-1}+\epsilon
$$
Take $(u_j)_{1\le j\le m_1}$ unit vectors  in $F$ that form a $\xi$-net of the unit sphere of $F$.\\
As $\overline{[y_k'\colon k\in \N]}$ is $c$-norming  so is  $[y_k'\colon k\in \N]$. Hence, there is $m_2\in \N$ and  unit vectors $(u_j')_{1\le j\le m_1}\subset [y_k'\colon 1\le k\le m_2]$ such that $|u_j'(u_j)|\ge c(1-\xi)$ for all $1\le j\le m_1$.\\
Now fix $x\in F$ with $\|x\|=1$ and $v\in \overline{[v_{i_l}\colon l >m_2]}$, and choose $1\le j\le m_1$ so that $\|x-u_j\|\le \xi$. Note that $v\in  \overline{[y_{k}\colon  k>m_2]}$, so $y_k'(v)=0$ for all $1\le k\le m_2$. Hence, 
\begin{align*}
1\le& c^{-1}(1-\xi)^{-1}|u_j'(u_j)|=c^{-1}(1-\xi)^{-1}|u_j'(u_j+v)|\le c^{-1}(1-\xi)^{-1}\|u_j+v\|\\
\le&  c^{-1}(1-\xi)^{-1}\|x+v\|+c^{-1}(1-\xi)^{-1}\|u_j-x\|\\
\le&  c^{-1}(1-\xi)^{-1}\|x+v\|+c^{-1}(1-\xi)^{-1}\xi.
\end{align*}
Thus,  
$$
\|x\|=1\le \frac{c^{-1}(1-\xi)^{-1}}{1-c^{-1}(1-\xi)^{-1}\xi}\|x+v\|\le (c^{-1}+\epsilon)\|x+v\|. 
$$
Finally, \ref{schauder} follows by \ref{norming} and Lemma~\ref{lemmanorming} 
\end{proof}

\section{Weak semi-greedy systems.}\label{sectionmainresults}

In this section, we prove our main results for weak semi-greedy minimal systems. \\
It was proven in \cite[Theorem 3.2]{Dilworth2003b} that every almost greedy Schauder basis is semi-greedy with constant only depending on its democracy and quasi-greedy constants (and thus, by Theorem~\ref{Theoremalmostgreedydemocraticquasigreedy}, only on its almost greedy constant), with a proof valid also for Markushevich bases (see also \cite[Theorem 1.10]{Berna2019} and~\cite[Corollary 4.2]{Dilworth2015}).  Moreover, it is known that if $(x_i)_i$ is an almost greedy Markushevich basis, then for each $0<\tau \le 1$ there is a constant $M$ depending only on the first quasi-greedy and the democracy constants of the basis and $\tau$  such that the conditions of Definition~\ref{definitionweaksemigreedy747473} hold for all $x\in X$, $m\in \N$, and \emph{every} weak thresholding set $\W^{\tau}(x,m)$. This fact was established in \cite[Theorem 7.1]{Dilworth2012} for finite dimensional Banach spaces, and the proof holds for the infinite dimensional case as well (see \cite[Theorem 1.2]{BBGHO2020}). \\
On the other hand, results in the opposite direction are not yet complete. In \cite[Theorem 3.6]{Dilworth2003b}, it is proved that every semi-greedy Schauder basis for a Banach space with finite cotype is almost greedy. In \cite[Theorem 1.10]{Berna2019}, the cotype condition is removed and it is proved that every semi-greedy Schauder basis is almost greedy with quasi-greedy and superdemocracy constants depending only on the basis constant and the semi-greedy constant, leaving the question of whether the implication from semi-greedy to almost greedy holds for general Markushevich bases (\cite[Question 1]{Berna2019}). Recently, Bern\'{a} extended \cite[Theorem 1.10]{Berna2019} to a certain class of Markushevich bases (known as $\rho$-admisible) \cite[Theorem 5.3]{Berna2019b}. To our knowledge the general case remained open until now. In this section, we complete the proof of the implication from semi-greedy to almost greedy Markushevich bases, and extend the result to WSG($\tau$) Markushevich bases. We also study the (weak) semi-greedy property for general minimal systems, without the Markushevich hypothesis. We begin with an auxiliary lemma. 

\begin{lemma}\label{lemmaseminormalizedsemigreedy294942}
Let $(x_i)_i\subset X$ be a WSG($\tau$) system with constant $K$. Then both $(x_i)_i$ and $(x_i')_i$ are seminormalized. \\
Moreover, $\sup_{i}\|x_i\|\le 2K\tau^{-1}\inf_{j}(1+\|x_j'\|\|x_j\|)\|x_j\|$.
\end{lemma}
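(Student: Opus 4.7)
The plan is to prove the quantitative bound $\sup_i\|x_i\| \le 2K\tau^{-1}\inf_j(1+\|x_j'\|\|x_j\|)\|x_j\|$ first, which immediately gives that $(x_i)$ is bounded above. The two remaining seminormalization claims will then fall out easily: $(x_i')$ is bounded above by a short uniform-boundedness argument, and both sequences are bounded below via biorthogonality $\|x_i\|\,\|x_i'\|\ge 1$.

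For the quantitative bound, fix arbitrary $i\neq j$ and test WSG$(\tau)$ on the two-term vector $x := x_j - \tfrac{\tau}{2}\,x_i$. The key observation is that $|x_j'(x)|=1$ while $|x_i'(x)|=\tau/2$, and $\tau/2 < \tau\cdot 1$, so $\{i\}$ is ruled out as a weak thresholding set of cardinality $1$. Hence $\{j\}$ is the only possible $\mathcal W^{\tau}(x,1)$, so the $z$ supplied by WSG$(\tau)$ must be of the form $z=c\,x_j$. Using $y=-\tfrac{\tau}{2}x_i$ as a single-term approximant gives $\sigma_1(x)\le \|x_j\|$, hence $\|x-z\|\le K\|x_j\|$.

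The main step is now to convert this upper bound on $\|x-z\|$ into an upper bound on $\|x_i\|$. Applying $x_j'$ to $x-z$ yields $|1-c|=|x_j'(x-z)|\le \|x_j'\|\,K\|x_j\|$. The triangle inequality applied to $x-z=(1-c)x_j -\tfrac{\tau}{2}x_i$ then gives
\[
\tfrac{\tau}{2}\|x_i\| \;\le\; \|x-z\| + |1-c|\,\|x_j\| \;\le\; K\|x_j\|\bigl(1+\|x_j'\|\|x_j\|\bigr),
\]
so $\|x_i\|\le 2K\tau^{-1}(1+\|x_j'\|\|x_j\|)\|x_j\|$. The case $i=j$ is absorbed by the factor $2$: since $\|x_j'\|\|x_j\|\ge 1$ the right-hand side is at least $4K\tau^{-1}\|x_j\|\ge\|x_j\|$ (this only uses the standard fact that the WSG constant satisfies $K\ge\tau/4$, which is inherent in $K\ge 1$ for any nontrivial system).

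For the boundedness of $(x_i')$, observe that WSG$(\tau)$ requires a weak thresholding set of cardinality $1$ for every $x\in X$: some index $i_0$ must satisfy $|x_{i_0}'(x)|\ge\tau|x_k'(x)|$ for all $k$, which forces $\sup_k|x_k'(x)|\le \tau^{-1}|x_{i_0}'(x)|<\infty$ pointwise. The Banach--Steinhaus principle then yields $\sup_k\|x_k'\|<\infty$, and biorthogonality closes the remaining cases ($\inf\|x_i\|\ge 1/\sup\|x_k'\|>0$ and $\inf\|x_i'\|\ge 1/\sup\|x_k\|>0$). The main technical point is the test-vector trick in the quantitative bound: one has to pick the coefficient of $x_i$ strictly below $\tau$ so as to force the WT set to be $\{j\}$ alone, because the WSG guarantee only supplies \emph{some} weak thresholding set, and if $\{i\}$ were permitted, the choice $z=-\tfrac{\tau}{2}x_i$ would make the approximation inequality trivially satisfied without yielding any control on $\|x_i\|$.
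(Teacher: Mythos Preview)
Your proof is correct and follows essentially the same approach as the paper's: both arguments pick a two-term test vector whose coefficients force the unique $1$-weak thresholding set to be $\{j\}$, then use the functional $x_j'$ together with the triangle inequality to extract a bound on $\|x_i\|$. The only cosmetic differences are that the paper uses $x_i+2\tau^{-1}x_j$ (scaling $x_j$ up) while you use $x_j-\tfrac{\tau}{2}x_i$ (scaling $x_i$ down), and the paper establishes the uniform boundedness of $(x_i')$ first rather than last; neither change affects the substance of the argument.
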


\begin{proof}For every $x\in X$, there is a weak thresholding set $\mathcal{W}^{\tau}(x,1)$, so
$(|x_i'(x)|)_{i}$ is bounded. It follows by uniform boundedness that $(x_i')_{i}$ is bounded. Since
$x_i'(x_i)=1$ for all $i\in \N$, $(x_i)_{i}$ is bounded below. \\
Given $i\not=j$, it follows from Definitions~\ref{defweakthreshset485875} and
\ref{definitionweaksemigreedy747473} that the only $1$-weak thresholding set with weakness parameter
$\tau$ for $x_i+2\tau^{-1}x_j$ is
$$
\mathcal{W}^{\tau}(x_i+2\tau^{-1}x_j,1)=\{ j\}.
$$
Let $ax_j$ be a Chebyshev $\tau$-greedy approximant for $x_i+2\tau^{-1}x_j$. We have
\begin{align}
\|x_i\|\le& \|x_i+2\tau^{-1}x_j-a x_j\|+\|2\tau^{-1}x_j-a x_j\|\nonumber\\
=&\|x_i+2\tau^{-1}x_j-a x_j\|+\|x_j'(x_i+2\tau^{-1}x_j-a x_j)x_j\|\nonumber\\
\le&(1+\|x_j'\|\|x_j\|)\|x_i+2\tau^{-1}x_j-a x_j\|\le (1+\|x_j'\|\|x_j\|)K\sigma_{1}(x_i+2\tau^{-1}x_j)\nonumber\\
\le& 2(1+\|x_j'\|\|x_j\|)\tau^{-1}K\|x_j\|.\nonumber
\end{align}
Thus, $(x_i)_{i}$ is bounded, which implies that $(x_i')_{i}$ is bounded below. Since the above inequality holds also for $i=j$, the bound in the statement follows by taking infimum over $j$ and supremum over $i$. 
\end{proof}

Now we prove that WSG($\tau$) Markushevich bases are almost greedy. The proof combines arguments from the proofs of \cite[Proposition 3.3]{Dilworth2003b} and \cite[Theorem 1.10 b]{Berna2019} - which we adapt to weak thresholding and weak Chebyshev greedy algorithms - with arguments based on the finite dimensional separation property - which allows us to work in the context of general Markushevich bases. 

\begin{theorem}\label{theoremsemigreedyalmostgreedy327211partI}
Let $0<\tau\le 1$, and let $(x_i)_i\subseteq X$ be a WSG($\tau$) system with constant $K$. The following are equivalent:  
\begin{enumerate}[\upshape (i)]
\item \label{almostgreedy} $(x_i)_i$ is almost greedy. 
\item \label{quasigreedy} $(x_i)_i$ is quasi-greedy. 
\item \label{markushevich2} $(x_i)_i$ is a Markushevich basis. 
\item \label{FDSP2} $(x_i)_i$ has the finite dimensional separation property.
\end{enumerate}
If any (and thus all) of these conditions holds, $(x_i)_i$ has second quasi-greedy constant $K_{2q}\le M_{fs}K+M_{fs}(M_{fs}+1)K^{2}\tau^{-2}$ and hyperdemocracy constant $K_{hd}\le M_{fs}(M_{fs}+1)K^2\tau^{-2}$, where $M_{fs}$ is the finite dimensional separation constant of $(x_i)_i$. 
\end{theorem}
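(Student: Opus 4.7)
Plan.

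The cycle of implications (i)$\Rightarrow$(ii)$\Rightarrow$(iii)$\Rightarrow$(iv) is the routine part. Almost greediness gives quasi-greediness by taking $A=\emptyset$ in Definition~\ref{definitionbiorthogonalalmostgreedy281822}. The text following Theorem~\ref{Theoremalmostgreedydemocraticquasigreedy} shows that every quasi-greedy system is a Markushevich basis, settling (ii)$\Rightarrow$(iii). Proposition~\ref{propositionseparation}(\ref{Markushevich}), applied with $(v_i)_i=(x_i)_i$ viewed as the trivial block basis of itself (and the scalar sequence $a_i=1$, noting that $(x_i)_i$ is seminormalized by Lemma~\ref{lemmaseminormalizedsemigreedy294942}), yields (iii)$\Rightarrow$(iv). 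The substantive implication is (iv)$\Rightarrow$(i), which we prove by establishing the quantitative bounds on $K_{hd}$ and $K_{2q}$ and then invoking the hyperdemocratic version of Theorem~\ref{Theoremalmostgreedydemocraticquasigreedy}\ref{pointb} remarked on in the text right after that theorem.

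For hyperdemocracy, we adapt the argument of Proposition~\ref{propositionwagag394995}, replacing projections by Chebyshev approximants and using the FDSP to control the spill-over between auxiliary supports. Given disjoint finite $A, B\subseteq\N$ with $|A|\le|B|$, scalars with $|a_i|\le|b_j|$, and $a=\max_i|a_i|$, we apply FDSP to $Z=\overline{[x_i:i\in\N]}$ to obtain a basic subsequence $(x_{i_k})_k$ of basis constant $\le M_{fs}+\epsilon$ and choose $C\subseteq\{i_k:k>j_F\}$ with $|C|=|B|$ and $C\cap(A\cup B)=\emptyset$, where $j_F$ corresponds to $F=[x_i:i\in A\cup B]$. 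Forming $v_1=\sum_{j\in B}b_jx_j+\sum_{k\in C}(1-\epsilon)a\tau\, x_k$ and $v_2=\sum_{i\in A}a_ix_i+\sum_{k\in C}(1+\epsilon)a\tau^{-1}x_k$, each has a unique $|C|$-weak thresholding set of parameter $\tau$, namely $B$ and $C$ respectively. WSG yields Chebyshev approximants $z_1,z_2$ with $\|v_1-z_1\|\le K\|\sum_{j\in B}b_jx_j\|$ and $\|v_2-z_2\|\le(1+\epsilon)a\tau^{-1}K\|\sum_{k\in C}x_k\|$, via the support-swap bound for $\sigma_{|C|}$. Decomposing each $v_i-z_i$ into its $F$-part and its $[x_k:k\in C]$-tail part, the FDSP inequality $\|y\|\le(M_{fs}+\epsilon)\|y+t\|$ together with a triangle inequality produces $(1-\epsilon)a\tau\|\sum_{k\in C}x_k\|\le(M_{fs}+1+\epsilon)K\|\sum_{j\in B}b_jx_j\|$ and $\|\sum_{i\in A}a_ix_i\|\le(M_{fs}+\epsilon)(1+\epsilon)a\tau^{-1}K\|\sum_{k\in C}x_k\|$. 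Chaining these and letting $\epsilon\to 0$ gives $K_{hd}\le M_{fs}(M_{fs}+1)K^2\tau^{-2}$.

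For quasi-greediness, fix $x, m$ with $\G_m(x)\ne 0$, let $n\le m$ be maximal with $x_{\rho(x,n)}'(x)\ne 0$ (so $\G_m(x)=\G_n(x)$), and set $B_0=\mathcal{GS}_n(x)$. Following the minimality argument from the proof of Proposition~\ref{propositionwagag394995}, we choose $m_1\ge n$ minimal so that WSG furnishes a weak thresholding set $W=\W^{\tau}(x,m_1)\supseteq B_0$ and a Chebyshev approximant $z\in[x_i:i\in W]$ with $\|x-z\|\le K\|x\|$; minimality forces $|x_j'(x)|\ge\tau^2|x_{\rho(x,n)}'(x)|$ for every $j\in W$. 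The split $\|x-\G_m(x)\|\le\|x-P_W(x)\|+\|P_{W\setminus B_0}(x)\|$ reduces the task to two bounds. The hyperdemocracy already proved, applied to $A=W\setminus B_0$ against the dominating coefficients $\tau^{-2}x_j'(x)$ on $W$, produces the $K_{hd}\|x\|$ contribution for the second term. For the first term, we apply FDSP with a subsequence chosen so that $W$ occupies the initial positions, and we approximate $x$ by a finitely supported $\tilde x$ (which exists by fundamentality) so that $\tilde x-P_W(\tilde x)$ lies genuinely in the tail span of the FDSP subsequence; then the FDSP estimate $\|P_W(\tilde x-z)\|\le(M_{fs}+\epsilon)\|\tilde x-z\|$, together with the identity $\tilde x-P_W(\tilde x)=(\tilde x-z)-P_W(\tilde x-z)$ and $\|\tilde x-z\|\le K\|\tilde x\|$, yields the $M_{fs}K\|x\|$ contribution after passing to the limit $\tilde x\to x$ and $\epsilon\to 0$, completing the $K_{2q}$ bound.

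The main obstacle is the quasi-greedy step. Unlike the Schauder basis case, a general Markushevich basis has neither a convergent biorthogonal expansion nor uniformly bounded projections $P_W$, so the FDSP can only be exploited after reducing to a finite-support approximation of $x$ and arranging the FDSP subsequence so that both $W$ and the support of the approximation occupy the appropriate positions. Once these preparations are in place, the final estimate is a careful triangle-inequality bookkeeping between the WSG inequality, the FDSP separation, and the hyperdemocracy bound established in the previous step; combining quasi-greediness with hyperdemocracy (which implies democracy) then gives almost greediness by Theorem~\ref{Theoremalmostgreedydemocraticquasigreedy}\ref{pointb}, closing the cycle.
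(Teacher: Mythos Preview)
Your argument for the hyperdemocracy bound is correct and coincides with the paper's. The quasi-greedy argument, however, breaks down.

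The FDSP step cannot be executed as you describe. You want to write $\tilde x - z = (P_W(\tilde x) - z) + (\tilde x - P_W(\tilde x))$ with the first summand in $F=[x_i:i\in W]$ and the second in the tail $\overline{[x_{i_k}:k>j_F]}$, but nothing forces $\supp(\tilde x)\setminus W$ into $\{i_k:k>j_F\}$: the set $W$ is dictated by the weak thresholding condition on $x$, the separating subsequence $(i_k)_k$ is dictated by the FDSP, and the support of a finitely supported approximation $\tilde x\approx x$ is dictated only by fundamentality. Lemma~\ref{lemmaordering} lets you permute the whole system but does not let you prescribe which indices appear in the separating subsequence. Even if the separation did hold, your identity would only yield $\|\tilde x-P_W(\tilde x)\|\le(1+M_{fs}+\epsilon)\|\tilde x-z\|$, so the first-term contribution would be $(1+M_{fs})K$, not $M_{fs}K$. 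Moreover, the hyperdemocracy comparison only gives $\|P_{W\setminus B_0}(x)\|\le K_{hd}\tau^{-2}\|P_W(x)\|$, not $K_{hd}\|x\|$; in Proposition~\ref{propositionwagag394995} the missing step $\|P_W(x)\|\le(1+M)\|x\|$ comes for free from the WAG inequality, which is not available here. The paper sidesteps all of this by never applying WSG to $x$ itself for the quasi-greedy bound. It first fixes an FDSP subsequence, takes $F_0=[x,x_i:1\le i\le j_0]$ with $j_0$ chosen so that $|x_i'(x)|$ is small beyond $j_0$, selects a block $\mathcal W$ of size $n$ from the FDSP tail past $j_{F_0}$, and then \emph{constructs} an auxiliary vector by adding to $x-\mathcal G_n(x)$ a large bump on $\mathcal W$, forcing $\mathcal W$ to be its unique $n$-weak thresholding set. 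Now $x-\mathcal G_n(x)\in F_0$ by construction, while both the bump and the WSG approximant lie in the tail, so FDSP applies cleanly and produces the coefficient $M_{fs}K$ directly; a second auxiliary construction bounds the size of the bump, yielding the $M_{fs}(M_{fs}+1)K^2\tau^{-2}$ term. The minimality trick from Proposition~\ref{propositionwagag394995} is not used.
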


\begin{proof}
The implication \ref{almostgreedy} $\Longrightarrow$ \ref{quasigreedy} is immediate. The comments after Theorem~\ref{Theoremalmostgreedydemocraticquasigreedy} give that 
\ref{quasigreedy} $\Longrightarrow$ \ref{markushevich2} and  Proposition~\ref{propositionseparation} gives that
\ref{markushevich2} $\Longrightarrow$ \ref{FDSP2}. To show that
\ref{FDSP2} $\Longrightarrow$ \ref{almostgreedy} fix $0<\epsilon<1$ and let $(x_{i_k})_k$ be a finite dimensional separation subsequence for $(X,M_{fs}, \epsilon)$.\\
Fix $x\in X$ and $m\in\N$, assuming that $\mathcal{G}_{m}(x)\not=0$
(otherwise, $\|x-\mathcal{G}_{m}(x)\|=\|x\|$ and there is nothing to prove). Let
$$
n:=\max{\{1\le j\le m\colon x_{\rho(x,j)}'(x)\not=0\}},
$$
and note that $\mathcal{G}_{n}(x)=\mathcal{G}_{m}(x)$. Since $x_{\rho(x,n)}'(x)\not=0$
and $x_{i}'(x)\xrightarrow[i\to \infty]{} 0$, there is $j_0\in \N$ such that for all $i\ge j_0$, 
\begin{equation}
|x_i'(x)| \le \frac{\tau\epsilon}{2} |x_{\rho(x,n)}'(x)|.\label{muchsmaller327211new}
\end{equation}
Now take $F_0:=[x, x_i\colon 1\le i\le j_0]$, let $\W:=\{ i_{k}\colon j_{F_0}+1\le k\le j_{F_0}+n\}$ and set $z$ as follows.
\begin{equation}
z:= x-\mathcal{G}_{n}(x)+(1+\epsilon)\tau^{-1}|x_{\rho(x,n)}'(x)|
\sum_{j\in\W}x_{j}-\sum_{j\in \W} x_{j}'(x)x_{j}.\label{sufficientlyfar327211}
\end{equation}
Since $i_{j_{F_0}+1}>j_0$, we deduce from \eqref{muchsmaller327211new} and the choice of  $z$ that for every $j\in  \W$ and every  $l\in \N\setminus \W$,
\begin{align*}
\tau |x_j'(z)| & \ge (1+\epsilon)|x_{\rho(x,n)}'(x)|> |x_l'(x-\mathcal{G}_{n}(x))|=|x_l'(z)|.\nonumber
\end{align*}
It follows from this and Definition~\ref{defweakthreshset485875} that the only $n$-weak thresholding set for $z$ with weakness parameter $\tau$ is $\W$. Let $u\in [x_j: j\in \W]$ be an $n$-term Chebyshev $\tau$-greedy approximant for $z$. Notice that both $x$ and $\mathcal{G}_{n}(x)$ belong to $F_0$. Also, by Lemma~\ref{lemmaseminormalizedsemigreedy294942}, $(\|x_i\|)_i$ and $(\|x_i'\|)_i$ are bounded, say by $N$. As
$$
\|\sum_{j\in \W}x_{j}'(x)x_{j}\|\le \sum_{j\in \W}\epsilon|x_{\rho(x,n)}'(x)|\|x_{j}\|\le
\epsilon nN^2\|x\|,
$$
from \eqref{muchsmaller327211new}, \eqref{sufficientlyfar327211} and the choice of our subsequence we deduce that

\begin{align}
\|x-\mathcal{G}_{n}(x)\| &\le(M_{fs}+\epsilon)\|z-u\|\le (M_{fs}+\epsilon)K\sigma_{n}(z)\nonumber\\
&\le (M_{fs}+\epsilon)K\|x+(1+\epsilon)\tau^{-1}|x_{\rho(x,n)}'(x)|\sum_{j\in \W} x_{j}-\sum_{j\in
\W}x_{j}'(x)x_{j}\|\nonumber\\
&\le  (M_{fs}+\epsilon)K(1+\epsilon n N^2)\|x\|+(M_{fs}+\epsilon)K(1+\epsilon)\tau^{-1}|x_{\rho(x,n)}'(x)|\|\sum_{j\in
\W} x_{j}\|.\label{secondleftbound327211}
\end{align}
Now, in order to estimate $|x_{\rho(x,n)}'(x)|\|\sum_{j\in \W}x_{j}\|$ we set
\begin{equation*}
w:= (1-\epsilon)\tau|x_{\rho(x,n)}'(x)|\sum_{j\in \W}x_{j}, 
\end{equation*}
and let $v$ be an $n$-term Chebyshev $\tau$-greedy approximant for $x+w$. 
We claim that $v\in F_0$. To prove this, from~\eqref{muchsmaller327211new} and the definition of $F_0$ we deduce that 
for all $i\in \N_{> j_0}\setminus \W$, 
$$
|x_i'(x+w)|=|x_i'(x)|\le \frac{\tau}{2}|x_{\rho(x,n)}'(x)| < \tau
|x_{\rho(x,n)}'(x)|,
$$
whereas for $i\in \W$,
$$
|x_i'(x+w)|\le (1-\epsilon)\tau
|x_{\rho(x,n)}'(x)|+\frac{\tau\epsilon}{2}|x_{\rho(x,n)}'(x)|<\tau
|x_{\rho(x,n)}'(x)|.
$$
Combining both inequalities above we get that
\begin{equation}
\{ i\in\N\colon |x_i'(x+w)|\ge \tau |x_{\rho(x,n)}'(x)|\} \subseteq \{
1,\dots,j_0\}.\label{beforej0327211}
\end{equation}
On the other hand, for all $1\le i\le n$, we have $i_{j_{F_0}+1}>j_0>\rho(x,i)$, so
$$
|x_{\rho(x,i)}'(x+w)|=|x_{\rho(x,i)}'(x)|\ge |x_{\rho(x,n)}'(x)|.
$$
From this and Definitions~\ref{defweakthreshset485875} and ~\ref{definitionweaksemigreedy747473} we
deduce that
$$
|x_i'(x+w)|\ge \tau |x_{\rho(x,n)}'(x)|
$$
for all $i\in \supp{(v)}$ which, combined with \eqref{beforej0327211}, implies that $\supp{(v)}
\subseteq \{ 1,\dots,j_0\}$. Since, by Definition~\ref{definitionweaksemigreedy747473}, $v$ is a
linear combination of the $x_i$'s with $i$ in its support, it follows that $v\in F_0$ (and so
$x-v\in F_0$). Hence, applying the separation property of $(x_{i_k})_k$ we deduce that
\begin{align*}
(1-\epsilon)\tau|x_{\rho(x,n)}'(x)|\|\sum\limits_{j\in \W}x_{j}\| & =\|w\|\le
\|x+w-v\|+\|x-v\|\\
& \le \|x+w-v\|+(M_{fs}+\epsilon)\|x+w-v\|\\
& \le (1+M_{fs}+\epsilon)K\sigma_{n}(x+w)\\
& \le (1+M_{fs}+\epsilon)K\|x\|.
\end{align*}
Then
$$
|x_{\rho(x,n)}'(x)|\|\sum\limits_{j\in \W}x_{j}\|\le (1-\epsilon)^{-1}\tau^{-1}(1+M_{fs}+\epsilon)K\|x\|.
$$
This result and \eqref{secondleftbound327211} entail that
$$
\|x- \mathcal{G}_{n}(x)\|\le (M_{fs}+\epsilon)K(1+\epsilon n N^2)\|x\|+(M_{fs}+\epsilon)(1+M_{fs}+\epsilon)\frac{1+\epsilon}{1-\epsilon}\tau^{-2}K^2\|x\|.
$$
As $\mathcal{G}_{n}(x)=\mathcal{G}_{m}(x)$, letting  $\epsilon \rightarrow 0$, we get
$$
\|x-\mathcal{G}_{m}(x)\|=\|x-\mathcal{G}_{n}(x)\|\le (M_{fs}K+M_{fs}(M_{fs}+1)K^{2}\tau^{-2})\|x\|.
$$
Since $x$ and $m$ were chosen arbitrarily, this proves that $(x_i)_i$ is quasi-greedy with second quasi-greedy constant $K_{2q}\le M_{fs}K+M_{fs}(M_{fs}+1)K^{2}\tau^{-2}$. Thus, it is a Markushevich basis. \\
Let us show the hyperdemocracy condition. Choose $\epsilon>0$ and $(x_{i_k})_k$ as before, and take $A$ and $B$ finite subsets of $\N$ such that $|A|\le
|B|$, and $(a_i)_{i\in A}, (b_j)_{j\in B}$ scalars with $|a_i|\le |b_j|$ for all $i\in A, j\in B$. Set
$$
F_1:=[x_i\colon i\in A\cup B]\quad \text{and}\quad  a_0:=\max_{i\in A}|a_i|,
$$
assuming without loss of generality that $a_0\not=0$. Take $\W:=\{ i_k\colon j_{F_1}+1\le k\le j_{F_1}+|A| \}$, and define:
$$
z_1:=\tau^{-1}a_0\sum_{l\in \W,} x_{l},\qquad
z_2:= \sum\limits_{i\in A}a_ix_i\qquad \text{and}\qquad
z_3:=\sum\limits_{j\in B}b_jx_j.
$$
Note that for all $i\in A$ and all $l\in \W$,
$$
|x_i'(z_2+(1+\epsilon)z_1)|=|x_i'(z_2)|=|a_i|<a_0(1+\epsilon)=\tau |x_l'(z_2+(1+\epsilon)z_1)|.
$$
By Definition~\ref{defweakthreshset485875}, it follows that the only $|A|$-weak thresholding set for
$z_2+(1+\epsilon)z_1$ with parameter $\tau$ is
$\W$.
Let $u\in [x_i:i\in \W]$ be a $|A|$-term Chebyshev $\tau$-greedy approximant for $z_2+(1+\epsilon)z_1$. We have
\begin{align}
\|\sum\limits_{i\in
A}a_ix_i\|=&\|z_2\|\le(M_{fs}+\epsilon)\|z_2+(1+\epsilon)z_1-u\|\nonumber\\
 \le& (M_{fs}+\epsilon)K\sigma_{|A|}(z_2+(1+\epsilon)z_1)\label{anotherleftside327211}\\
\le& (1+\epsilon)(M_{fs}+\epsilon)K\|z_1\|.\nonumber
\end{align}
Similarly, since for all $j\in B$, $(1-\epsilon)\tau a_0< \tau |b_j|$, the only $|B|$-weak thresholding set
for $z_3+(1-\epsilon)\tau^{2}z_1$ with parameter $\tau$ is $B$. Thus, by the WSG($\tau$) condition there is $v\in [x_i:i\in B]$ such that
\begin{equation*}
\|z_3+(1-\epsilon)\tau^{2}z_1-v\|\le
K\sigma_{|B|}(z_3+(1-\epsilon)\tau^{2}z_1)\le K \|z_3\| = K\|\sum\limits_{j\in B}b_jx_j\|.
\end{equation*}
Hence,
\begin{align}
(1-\epsilon)\tau^{2}\|z_1\|\le& \|z_3+(1-\epsilon)\tau^{2}z_1-v\|+\|z_3-v\|\nonumber\\
\le& (1+M_{fs}+\epsilon)\|z_3+(1-\epsilon)\tau^{2}z_1-v\|\nonumber\\
\le&(1+M_{fs}+\epsilon)K\|\sum\limits_{j\in B}b_jx_j\|.\nonumber
\end{align}
From this and \eqref{anotherleftside327211} we obtain
$$
\|\sum\limits_{i\in A}a_ix_i\|\le
\frac{(1+\epsilon)(M_{fs}+\epsilon)(M_{fs}+1+\epsilon)}{(1-\epsilon)}K^2\tau^{-2}\|\sum\limits_{j\in B}b_jx_j\|.
$$
We complete the proof of the hyperdemocracy property by letting $\epsilon\rightarrow 0$. Finally, an application of Theorem~\ref{Theoremalmostgreedydemocraticquasigreedy} 
gives that $(x_i)_i$ is almost greedy.
\end{proof}
\begin{corollary}\label{corollaryequivalences} Let $(x_i)_i$ be a Markushevich basis. The following are equivalent: 
\begin{enumerate}[\upshape (i)]
\item \label{tausemigreedy487575} For every $0<\tau\le 1$, $(x_i)_i$ is WSG($\tau$).
\item \label{tausemigreedyb487575} There is $0<\tau\le 1$ such that $(x_i)_i$ is WSG($\tau$).
\item \label{weakalmostgreedyalltau487575} For every $0<\tau\le 1$, $(x_i)_i$ is WAG($\tau$).
\item \label{weakalmostgreedyonetau487575} There is $0<\tau\le 1$ such that $(x_i)_i$ is WAG($\tau$).
\item \label{semigreedy487575} $(x_i)_i$ is semi-greedy.
\item \label{almostgreedy487575} $(x_i)_i$ is almost greedy.
\end{enumerate}
\end{corollary}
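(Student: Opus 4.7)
The plan is to assemble the equivalence essentially as a bookkeeping exercise on top of the results already proven. The trivial implications are (i)$\Rightarrow$(ii), (iii)$\Rightarrow$(iv), and (v)$\Rightarrow$(ii): the first two by specialising the universal $\tau$, and the third because the greedy set $\mathcal{GS}_m(x)$ is a $1$-weak thresholding set, so any semi-greedy system is trivially WSG($1$). It is therefore enough to close two loops: one through the WSG conditions, one through the WAG conditions.

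For the WSG loop, I would argue (ii)$\Rightarrow$(vi)$\Rightarrow$(v)$\Rightarrow$(ii). The first step is Theorem \ref{theoremsemigreedyalmostgreedy327211partI}: since $(x_i)_i$ is a Markushevich basis, Proposition \ref{propositionseparation}\ref{Markushevich} supplies the finite dimensional separation property, so the implication \ref{FDSP2}$\Rightarrow$\ref{almostgreedy} in Theorem \ref{theoremsemigreedyalmostgreedy327211partI} yields almost greediness. The step (vi)$\Rightarrow$(v) is the classical result that almost greedy Markushevich bases are semi-greedy, as recorded in the discussion at the start of Section \ref{sectionmainresults} via \cite[Theorem 3.2]{Dilworth2003b} and \cite[Theorem 1.10]{Berna2019}. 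Finally (v)$\Rightarrow$(ii) is the trivial specialisation above.

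For the WAG loop, I would argue (iv)$\Rightarrow$(vi)$\Rightarrow$(iii)$\Rightarrow$(iv). The first implication is Proposition \ref{propositionwagag394995}. The implication (vi)$\Rightarrow$(iii) is the content of \cite[Theorem 2.2]{Konyagin2002}, quoted in the opening of Section \ref{sectionWAG}: for an almost greedy Markushevich basis, inequality \eqref{taualmostgreedy328478} actually holds for \emph{every} weak thresholding set $\W^{\tau}(x,m)$, with constant depending only on $\tau$ and the almost greedy constant. Then (iii)$\Rightarrow$(iv) is trivial.

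It remains to close the circle by connecting (vi) back to (i). This is the implication (vi)$\Rightarrow$(i): for an almost greedy Markushevich basis, \cite[Theorem 7.1]{Dilworth2012} together with its infinite dimensional extension \cite[Theorem 1.2]{BBGHO2020} (as recalled in Section \ref{sectionmainresults}) shows that the WSG($\tau$) condition holds for \emph{every} weak thresholding set, with constant depending only on $\tau$ and the almost greedy constant. Chaining (vi)$\Rightarrow$(i)$\Rightarrow$(ii)$\Rightarrow$(vi) and the WAG loop gives all six equivalences. There is no real obstacle here since the substantial work is already done in Theorem \ref{theoremsemigreedyalmostgreedy327211partI} and Proposition \ref{propositionwagag394995}; the only thing to be careful about is citing the correct results for the two implications that go \emph{from} almost greediness to the weak versions.
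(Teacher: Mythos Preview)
Your proof is correct and follows essentially the same skeleton as the paper: the substantive implications (ii)$\Rightarrow$(vi) via Theorem~\ref{theoremsemigreedyalmostgreedy327211partI}, (iv)$\Rightarrow$(vi) via Proposition~\ref{propositionwagag394995}, and (vi)$\Rightarrow$(v) via \cite[Theorem 3.2]{Dilworth2003b} are exactly the paper's route.

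The only difference is that you invoke heavier external results for two implications that the paper dispatches straight from the definitions. For (vi)$\Rightarrow$(iii), the paper simply observes that the greedy set $\mathcal{GS}_m(x)$ is a $\tau$-weak thresholding set for every $0<\tau\le 1$, so an almost greedy system is automatically WAG($\tau$) for all $\tau$; your appeal to \cite[Theorem 2.2]{Konyagin2002} (which gives the inequality for \emph{every} $\tau$-weak thresholding set) is correct but stronger than needed. Similarly, to bring (i) into the equivalence the paper uses the same definitional observation to get (v)$\Rightarrow$(i) directly, whereas you go (vi)$\Rightarrow$(i) via \cite[Theorem 7.1]{Dilworth2012} and \cite[Theorem 1.2]{BBGHO2020}. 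Your route is valid, but you already had the key observation in hand when you noted (v)$\Rightarrow$(ii): since $\mathcal{GS}_m(x)\in\W^{\tau}(x,m)$ for every $\tau$, semi-greedy in fact gives WSG($\tau$) for all $\tau$, i.e.\ (v)$\Rightarrow$(i), with no further citation required.
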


\begin{proof} 
The implications \ref{tausemigreedy487575} $\Longrightarrow$ \ref{tausemigreedyb487575} and \ref{weakalmostgreedyalltau487575} $\Longrightarrow$ \ref{weakalmostgreedyonetau487575} are immediate. \\ Also,  \ref{semigreedy487575} $\Longrightarrow$ \ref{tausemigreedy487575} and  \ref{almostgreedy487575} $\Longrightarrow$ \ref{weakalmostgreedyalltau487575} follow at once from the definitions.\\
That \ref{tausemigreedyb487575} $\Longrightarrow$ \ref{almostgreedy487575}  and \ref{semigreedy487575} $\Longrightarrow$ \ref{almostgreedy487575} follow by Theorem~\ref{theoremsemigreedyalmostgreedy327211partI}. \\
By Proposition~\ref{propositionwagag394995}, we see that       \ref{weakalmostgreedyonetau487575} $\Longrightarrow$ \ref{almostgreedy487575}. \\
Finally, \ref{almostgreedy487575} $\Longrightarrow$ \ref{semigreedy487575} follows by  \cite[Theorem 3.2]{Dilworth2003b}, as their proof holds for Markushevich bases. 
\end{proof}

 When the conditions of Proposition~\ref{propositionseparation}\ref{weakzero} hold, we have $M_{fs}=1$ for any Markushevich basis. Thus, in such cases Theorem~\ref{theoremsemigreedyalmostgreedy327211partI} gives upper bounds for the second quasi-greedy and the hyperdemocracy constant depending only on $K$ and $\tau$. On the other hand, and unlike the implication from (weak) almost greedy to semi-greedy, in general there is no upper bound for the almost greedy constant of a WSG($\tau$) Markushevich basis depending only on the WSG($\tau)$ constant and $\tau$. The following example illustrates that. 

\begin{example}\label{exampleseminotalmost493821} Let $(e_i)_{i}$ and $(e_i')_{i}$ be the unit vector
basis of $\ell_1$ and its sequence of coordinate functionals respectively. Given $\alpha>0$, define
\begin{align*}
x_{i}:=&e_i+ 2(\alpha+1)(-1)^{i}e_1\; \text{ for \ all\ }  i\ge 2;\\
X:=& \overline{[x_{i}\colon i\ge 2]};\\
x_{i}':=&e_i'\big|_{X}\; \text{ for \ all\ }  i\ge 2.
\end{align*}
The following statements hold:
\begin{enumerate}[\upshape (a)]
\item \label{l1493821} $(x_i)_{i\ge 2}$ is a basic sequence equivalent to the unit vector basis of $\ell_1$ with democracy constant $K_{d}> \alpha+1$.
\item \label{quasigreedy493821}  $(x_i)_{i\ge 2}$ is an almost greedy Markushevich basis for $X$ with biorthogonal functionals  $(x_i')_{i\ge 2}$. Its almost greedy constant and quasi-greedy constants are greater than $\alpha$.\item \label{WAG493821} For every $0<\tau<1$, if $M(\tau)$ is a WAG($\tau$) constant for
    $(x_i)_{i\ge 2}$, then
$$
M(\tau) > \tau \sqrt{\alpha+1}.
$$
\item \label{semigreedy493821} $(x_i)_{i\ge 2}$ is a semi-greedy Markushevich basis for $X$ with
    semi-greedy constant $K_{s}\le 4$. Moreover, for every $x\in X$, $m\in \N$ and every set
    $\mathcal{W}^{\tau}(x,m)$, there are scalars $(b_i)_{i\in \mathcal{W}^{\tau}(x,m)}$ such that
$$
\|x-\sum\limits_{i\in \mathcal{W}^{\tau}(x,m)}b_ix_i\|\le 4\tau^{-1}\sigma_{m}(x).
$$
\end{enumerate}
\end{example}

\begin{proof} Let us show that \ref{l1493821} holds.
For each $n\ge 2$, we have
$$
\sum\limits_{i=2}^{n}|a_i|\le
\sum\limits_{i=2}^{n}|a_i|+|\sum\limits_{i=2}^{n}2(\alpha+1)(-1)^{i}a_i|=\|\sum\limits_{i=2}^{n}a_ix_{i}\|\le
(3+2\alpha)\sum\limits_{i=2}^{n}|a_i|.
$$
We see that  $(x_i)_{i\ge 2}$ is basic and equivalent to the unit vector basis of $\ell_1$, so in
particular, it is democratic. Since
$\|x_2+x_3\|=\|e_2+2(\alpha+1)e_1+e_3-2(\alpha+1)e_1\|=2$ and $\|x_2\|=\|e_2+2(\alpha+1)e_1\|=2\alpha+3$, it follows that
$$
K_{d}\ge \frac{2\alpha+3}{2}>\alpha+1.
$$
As $(x_i)_{i\ge 2}$  is equivalent to $(e_i)_{i\ge 2}$, it is an almost greedy Markushevich basis for $X$. Clearly,  $(x_i')_{i\ge 2}$ is its biorthogonal sequence. To complete the proof of~\ref{quasigreedy493821}, let $K_a, K_{1q}$ and $K_{2q}$ be the almost greedy, first and second quasi-greedy constants of the system, respectively. By Theorem~\ref{Theoremalmostgreedydemocraticquasigreedy}, we have  $K_a\ge K_d>\alpha+1$, and since $\mathcal{G}_1(x_2+x_3)=x_2,$ we get that $K_{1q}\ge \frac{2\alpha+3}{2}>\alpha+1$, so $K_{2q}>\alpha$. \\
To prove \ref{WAG493821}, apply Proposition~\ref{propositionwagag394995} to get 
$$
\alpha+1 < K_d \le K_{hd}\le \tau^{-2}M^2(\tau), 
$$
from where the lower bound for $M(\tau)$ is obtained.\\
Finally, let us show that \ref{semigreedy493821} holds.
Fix $0<\tau\le 1$, $x\in X$, $m\in \N$, and a set $\W=\mathcal{W}^{\tau}(x,m)$. Given
$A\subseteq \N_{>1}$  with $|A|=m$ and scalars $(a_i)_{i\in A}$, we proceed as follows:
If $A=\mathcal{W}$, we choose $b_i:=a_i$ for each $i$. Otherwise, fix $\pi$ any bijection
$$
\pi\colon \W\setminus A\rightarrow A\setminus \W.
$$
For every $j\in \W$, we define
\begin{numcases}{b_j:=}
a_j & if $j\in A$;\nonumber\\
(-1)^{j+\pi(j)}a_{\pi(j)} & otherwise.\nonumber
\end{numcases}
Let us estimate the $\ell_1$-norm  $\|x-\sum\limits_{j\in\W} b_jx_{j}\|$ in terms of $\|x-\sum\limits_{i\in A}a_ix_{i}\|$. For the first coordinate we get
\begin{align}
e_1'(x-\sum\limits_{j\in \W}b_jx_{j})
= & e_1'(x)-\sum\limits_{i\in \W \cap A}a_ie_1'(x_{i})-\sum\limits_{j\in \W\setminus
A}a_{\pi(j)}(-1)^{j+\pi(j)}e_1'(x_{j})\nonumber\\
= & e_1'(x)-\sum\limits_{i\in \W\cap A}a_ie_1'(x_{i})-\sum\limits_{j\in \W\setminus
A}2a_{\pi(j)}(\alpha+1)(-1)^{j+\pi(j)}(-1)^{j}\nonumber\\
= & e_1'(x)-\sum\limits_{i\in \W\cap A}a_ie_1'(x_{i})-\sum\limits_{i\in  A\setminus\W}
2a_i(\alpha+1)(-1)^{i}\nonumber\\
= & e_1'(x-\sum\limits_{i\in A}a_ix_{i}). \nonumber
\end{align}
Now, suppose that  $l>1$. For $l\in \N\setminus A\cup \W$, 
\begin{equation*}
e_l'(x-\sum\limits_{j\in \W} b_jx_{j}) = e_l'(x) = e_l'(x-\sum\limits_{i\in A} a_ix_{i}).
\end{equation*}
Also, if  $l\in A\cap \W$ then
\begin{equation*}
e_l'(x-\sum\limits_{j\in \W}b_jx_{j})=e_l'(x)-b_l=e_l'(x)-a_l=e_l'(x-\sum\limits_{i\in
A}a_ix_{i}).
\end{equation*}
On the other hand, if $l\in \W\setminus A$ we compute the $l$- and the $\pi(l)$- coordinates
components at the same time. From the fact that $\pi(l)\not \in \W$ we deduce that
$|e_l'(x)|=|x_l'(x)|\ge \tau|x_{\pi(l)}'(x)|=\tau|e_{\pi(l)}'(x)|$. Thus,
\begin{align}
|e_l'(x-\sum\limits_{j\in \W} b_jx_{j})|+|e_{\pi(l)}'(x-\sum\limits_{j\in \W} b_jx_{j})| = &
|e_l'(x)-b_l|+|e_{\pi(l)}'(x)|\nonumber\\
= & |e_l'(x)-(-1)^{l+\pi(l)}a_{\pi(l)}|+|e_{\pi(l)}'(x)|\nonumber\\
\le & 2 \frac{|e_l'(x)|}{\tau}+|a_{\pi(l)}|\nonumber\\
\le& 2 \max\Big\{2 \frac{|e_l'(x)|}{\tau},|a_{\pi(l)}|\Big\}\nonumber\\
\le & 2 \max\Big\{2 \frac{|e_l'(x)|}{\tau},2|a_{\pi(l)}|-2\frac{|e_l'(x)|}{\tau}\Big\}\nonumber\\
= & \frac{4}{\tau}\tau \max\Big\{\frac{|e_l'(x)|}{\tau},|a_{\pi(l)}|-\frac{|e_l'(x)|}{\tau}\Big\}.
\label{oneside493821}
\end{align}
Similarly,
\begin{align}
|e_l'(x-\sum\limits_{i\in A}a_ix_{i})|+|e_{\pi(l)}'(x-\sum\limits_{i\in
A}a_ix_{i})|=&|e_l'(x)|+|e_{\pi(l)}'(x)-a_{\pi(l)}|\nonumber\\
\ge& \max\Big\{|e_l'(x)|,|a_{\pi(l)}|-|e_{\pi(l)}'(x)|\Big\}\nonumber\\
\ge& \max\Big\{|e_l'(x)|,|a_{\pi(l)}|-\frac{|e_{l}'(x)|}{\tau}\Big\}\nonumber\\
\ge& \tau\max\Big\{\frac{|e_l'(x)|}{\tau},|a_{\pi(l)}|-\frac{|e_{l}'(x)|}{\tau}\Big\}.
\label{anotherside493821}
\end{align}
Comparing \eqref{oneside493821} and \eqref{anotherside493821}, we obtain
$$
|e_l'(x-\sum\limits_{j\in \W} b_jx_{j})|+|e_{\pi(l)}'(x-\sum\limits_{j\in \W} b_jx_{j})|\le
\frac{4}{\tau}(|e_l'(x -\sum\limits_{i\in A}a_ix_{i})|+|e_{\pi(l)}'(x -\sum\limits_{i\in
A}a_ix_{i})|).
$$
Combining the above estimates we get
$$
\|x-\sum\limits_{j\in\W} b_jx_{j}\|\le \frac{4}{\tau}\|x-\sum\limits_{i\in A}a_ix_{i}\|.
$$
Now, the  left-hand side of the inequality is greater than or equal to the infimum over all scalars
$(b_i)_{i\in \W}$, which in fact is a minimum since $\W$ is finite. Then, taking the infimum over all
$A\subseteq \N$ with $|A|=m$ and scalars $(a_i)_{i\in A}$ on the right-hand side, we conclude that$$
\min\limits_{(b_j)_{j\in \W}\subseteq\mathbb{K}}\|x-\sum\limits_{j\in  \W}b_jx_j\|\le
\frac{4}{\tau}\sigma_{m}(x).
$$
Taking $\tau=1$, we conclude that $(x_i)_{i\ge 2}$ is semi-greedy, and we get the bound for $K_s$.
\end{proof}
A natural question in this context is whether the implication from WSG($\tau$) to almost greedy holds for all WSG($\tau$) systems, or - equivalently in light of Theorem~\ref{theoremsemigreedyalmostgreedy327211partI} -  whether every weak semi-greedy system is a Markushevich basis. The answer is negative. The following example shows a semi-greedy system that is
neither quasi-greedy nor democratic.
\begin{example}\label{exampleseminotalmost939214} Let $(e_i)_{i}$ be the unit vector basis of
$\mathtt{c}_0$ and let $(e_i')_{i}$ be the sequence of biorthogonal functionals. Set
\begin{align*}
x_i:=&e_i+(-1)^{i}e_1\ \text{ for \ all\ }  i\ge 2;\\
x_i':=& e_i'\ \text{ for \ all\ }  i\ge 2.
\end{align*}
The following statements hold:
\begin{enumerate}[\upshape (a)]
\item \label{markushevich939214} $(x_i)_{i\ge 2}$ is a fundamental minimal system for
    $\mathtt{c}_0$, but not a Markushevich basis. Thus, it is not quasi-greedy.
\item \label{democratic939214} $(x_i)_{i\ge 2}$  is not democratic.
\item \label{semigreedy939214} $(x_i)_{i\ge 2}$  is a semi-greedy system for $\mathtt{c}_0$ with
    semi-greedy constant no greater than $3$. Moreover, for any $x\in X$, $m\in \N$ and every set
    $\mathcal{W}^{\tau}(x,m)$, there are scalars $(b_i)_{i\in \mathcal{W}^{\tau}(x,m)}$ such that
$$
\|x-\sum\limits_{i\in \mathcal{W}^{\tau}(x,m)}b_ix_i\|\le 3\tau^{-1}\sigma_{m}(x).
$$
\end{enumerate}
\end{example}

\begin{proof}
To show that \ref{markushevich939214} holds, first note that for all $n\in \N$, 
$$
\|e_1-\sum\limits_{i=1}^{n}\frac{x_{2i}}{n}\|=\|\sum\limits_{i=1}^{n}\frac{e_{2i}}{n}\|=\frac{1}{n}.
$$
This entails that $e_1\in \overline{[x_i\colon i\ge 2]}$, so $(x_i)_{i\ge 2}$  is fundamental. Since
$x_j'(e_1)=0$ for every $j\ge 2$, $(x_i)_{i\ge 2}$  is not a Markushevich basis, and thus it is not quasi-greedy.\\
To see that $(x_i)_{i\ge 2}$ is not democratic, notice that for all $n\in \N$, 
$$
\|\sum\limits_{i=2}^{2n+1}x_{i}\|=\|\sum\limits_{i=2}^{2n+1}e_{i}\|=1,
$$
but
$$
\|\sum\limits_{i=1}^{2n}x_{2i}\|=\|2ne_1+\sum\limits_{i=1}^{2n}e_{i}\|=2n.
$$
Hence, \ref{democratic939214} holds.
To prove \ref{semigreedy939214}, we proceed as in the proof of Example
\ref{exampleseminotalmost493821}. Fix $0<\tau\le 1$, $x\in X$, $m\in \N$ and a set
$\W=\mathcal{W}^{\tau}(x,m)$. Take a set $A\subseteq \N_{>1}$ with $|A|=m$ and $A\not=\W$, and
scalars $(a_i)_{i\in A}$, and let
$$
\pi\colon\W\setminus A\rightarrow A\setminus \W
$$
be a bijection. For every $j\in \W$, define
\begin{numcases}{b_j:=}
a_j & if $j\in A$;\nonumber\\
(-1)^{j+\pi(j)}a_{\pi(j)} & otherwise.\nonumber
\end{numcases}
Now, we estimate the supremum norm of $x-\sum\limits_{j\in \W}b_jx_{j}$ in terms of that of
$x-\sum\limits_{i\in A}a_jx_{j}$.
First note that if $l>1$ and  $l\in \N\setminus A\cup\W$ or $l\in  A\cap \W$, we have
\begin{equation*}
e_l'(x-\sum\limits_{j\in \W}b_jx_{j})=e_l'(x-\sum\limits_{i\in A}a_ix_{i}).
 \label{notmodified939214}
\end{equation*}
This equality also holds for $l=1$, indeed
\begin{align*}
e_1'(x-\sum\limits_{j\in \W}b_jx_{j})  = &e_1'(x)-\sum\limits_{i\in\W\cap
A}a_ie_1'(x_{i})-\sum\limits_{j\in\W\setminus A}a_{\pi(j)}(-1)^{\pi(j)}\nonumber\\
=&e_1'(x)-\sum\limits_{i\in\W\cap A}a_ie_1'(x_{i})-\sum\limits_{j\in\W\setminus
A}a_{\pi(j)}e_1'(x_{\pi(j)})\nonumber\\
=&e_1'(x)-\sum\limits_{i\in \W\cap A}a_ie_1'(x_{i})-\sum\limits_{i\in
A\setminus\W}a_ie_1'(x_i)\nonumber\\
= &e_1'(x-\sum\limits_{i\in A}a_ix_i).
\end{align*}
For each $l\in\W\setminus A$, we have $|e_l'(x)|=|x_l'(x)|\ge \tau|x_{\pi(l)}'(x)|=\tau
|e_{\pi(l)}'(x)|$. Hence, considering together the $l$- and the $\pi(l)$-th coordinates we have
\begin{align*}
\max\Big\{|e_l'(x-\sum\limits_{j\in \W}b_jx_{j})|,|e_{\pi(l)}'(x-\sum\limits_{j\in
\W}b_jx_{j})|  \Big\} &=\max\Big\{|e_l'(x)-b_l|,|e_{\pi(l)}'(x)|\Big\}\\
&\le \max\Big\{|e_l'(x)-\pm a_{\pi(l)}|,\frac{|e_{l}'(x)|}{\tau}\Big\}\nonumber\\
&  \le   \frac{|e_l'(x)|}{\tau}+|a_{\pi(l)}|\\
& \le 3
\max\Big\{\frac{|e_{l}'(x)|}{\tau},|a_{\pi(l)}|-\frac{|e_l'(x)|}{\tau}\Big\}.
\end{align*}
Similarly, we obtain
$$
\max\Big\{|e_l'(x-\sum\limits_{i\in A}a_ix_{i})|,|e_{\pi(l)}'(x-\sum\limits_{i\in
A}a_ix_{i})|\Big\}\ge \tau
\max\Big\{\frac{|e_l'(x)|}{\tau},|a_{\pi(l)}|-\frac{|e_{l}'(x)|}{\tau}\Big\}.
$$
From the inequalities given above,
$$
\|x-\sum\limits_{j\in \W}b_jx_{j}\|\le \frac{3}{\tau}\|x-\sum\limits_{i\in A}a_ix_{i}\|.
$$
The proof of \ref{semigreedy939214} is completed  by the same argument given in Example~
\ref{exampleseminotalmost493821}.
\end{proof}
\begin{remark}The system of Example~\ref{exampleseminotalmost939214} can also be considered in
$\ell_p$ for all $1<p<\infty$. With only minor adjustments to the calculations given above, we obtain
that  $(x_i)_{i\ge 2}\subseteq \ell_p$  is semi-greedy (with constant no greater than $3*2^{\frac{1}{p}}$),
but neither democratic nor quasi-greedy.
\end{remark}
Our next proposition shows that from any WSG($\tau$) system that is not a Markushevich basis, one can obtain an almost greedy Markushevich basis for the space, with superdemocracy and first quasi-greedy constants depending only on $\tau$ and the WSG($\tau$) constant of the system - and thus, by Theorem~\ref{Theoremalmostgreedydemocraticquasigreedy}, with almost greedy constant also depending only on said constants. In order to prove our result, we need two technical lemmas. The notation used below is natural and according to the context.
\begin{lemma} \label{lemmastillalmostgreedy947756} Let $\mathcal{B}_1=(x_i)_{i\in \N}$ be a
fundamental minimal system for $Y$, and suppose that
$\mathcal{B}_2:=(x_0,x_i)_{i\in \N}$ is a fundamental minimal system for $X:=\overline{[x_i\colon i\in
\N_0]}$ with biorthogonal functionals $(x_0',x_i')_{i\in \N}\subseteq X'$ satisfying
$$\|x_0\|\|x_0'\|=1\qquad \text{and}\qquad \|x_0\|=\sup\limits_{i\in \N}\|x_i\|.
$$
The following hold:
\begin{enumerate}[\upshape (a)]
\item \label{1947756}
If $\mathcal{B}_1$ is quasi-greedy with first quasi-greedy constant $K_{1q}(\mathcal{B}_1)$, then
$\mathcal{B}_2$ is quasi-greedy with first quasi-greedy constant
$$
K_{1q}(\mathcal{B}_2)\le 2K_{1q}(\mathcal{B}_1)+1.
$$
\item \label{2947756} If $\mathcal{B}_1$ is superdemocratic with constant $K_{sd}(\mathcal{B}_1)$,
    then $\mathcal{B}_2$ is superdemocratic with constant
\begin{equation*}
K_{sd}(\mathcal{B}_2)\le 4K_{sd}(\mathcal{B}_1).
\end{equation*}
\end{enumerate}
\end{lemma}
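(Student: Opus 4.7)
The plan rests on two observations tied to the hypotheses. First, $\|x_0\|\|x_0'\| = 1$ makes $P(x) := x_0'(x)\, x_0$ a norm-one projection of $X$ onto $[x_0]$ with kernel $Y$, so every $x \in X$ decomposes as $x = a_0 x_0 + y$ with $a_0 = x_0'(x)$, $y \in Y$, $|a_0|\,\|x_0\| \le \|x\|$, and $\|y\| = \|x - P(x)\| \le 2\|x\|$. Second, since $x_i'(x_0) = 0$ for $i \ge 1$, the $\mathcal{B}_2$-coefficients of $x$ at indices in $\N$ coincide with the $\mathcal{B}_1$-coefficients of $y$; in particular, the greedy ordering of $x$ with respect to $\mathcal{B}_2$ agrees with that of $y$ with respect to $\mathcal{B}_1$ on all indices other than $0$.

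For \ref{1947756}, I would split according to whether $0$ lies in the $m$-th $\mathcal{B}_2$-greedy set of $x$. If it does, then $\mathcal{G}^{\mathcal{B}_2}_m(x) = a_0 x_0 + \mathcal{G}^{\mathcal{B}_1}_{m-1}(y)$, and the triangle inequality together with the decomposition bounds yields $\|\mathcal{G}^{\mathcal{B}_2}_m(x)\| \le |a_0|\,\|x_0\| + K_{1q}(\mathcal{B}_1)\|y\| \le (1 + 2K_{1q}(\mathcal{B}_1))\|x\|$. If it does not, then $\mathcal{G}^{\mathcal{B}_2}_m(x) = \mathcal{G}^{\mathcal{B}_1}_m(y)$, giving the stronger estimate $\|\mathcal{G}^{\mathcal{B}_2}_m(x)\| \le 2K_{1q}(\mathcal{B}_1)\|x\|$. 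In either case, the announced bound follows.

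For \ref{2947756}, fix $A, B \subseteq \N_0$ with $|A| \le |B|$ and scalars $(a_i)_{i\in A}$, $(b_j)_{j\in B}$ of common modulus $c$; set $A' := A \setminus \{0\}$, $B' := B \setminus \{0\}$, $z := \sum_{j \in B} b_j x_j$. I would treat four cases. The routine ones are $0 \notin A \cup B$ (direct $\mathcal{B}_1$-superdemocracy) and $0 \in A \cap B$ (bound $|a_0|\,\|x_0\| \le \|z\|$ via $P$ applied to $z$, and control $\sum_{A'} a_i x_i$ through $\mathcal{B}_1$-superdemocracy against $\sum_{B'} b_j x_j$, using $\|z - b_0 x_0\| \le 2\|z\|$). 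The delicate case is $0 \in A$, $0 \notin B$, where $x_0'(z) = 0$ precludes bounding $c\,\|x_0\|$ through $x_0'$ directly. The key trick is to invoke $\|x_0\| = \sup_{i\in\N}\|x_i\|$: for each $\varepsilon > 0$ pick $j^* \in \N$ with $\|x_{j^*}\| > \|x_0\| - \varepsilon$, and apply $\mathcal{B}_1$-superdemocracy to the singleton $\{j^*\}$ with coefficient $c$ against $B$, obtaining $c\,\|x_{j^*}\| \le K_{sd}(\mathcal{B}_1)\|z\|$; letting $\varepsilon \to 0$ yields $c\,\|x_0\| \le K_{sd}(\mathcal{B}_1)\|z\|$. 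The remaining case $0 \notin A$, $0 \in B$ with $|A'| > |B'|$ is handled by peeling off one index $i^* \in A$ and estimating $c\,\|x_{i^*}\| \le c\,\|x_0\| \le \|z\|$ separately, while the remaining $|A| - 1 = |B'|$ indices are treated by $\mathcal{B}_1$-superdemocracy.

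Collecting the estimates in each case and using $K_{sd}(\mathcal{B}_1) \ge 1$ gives $K_{sd}(\mathcal{B}_2) \le 4K_{sd}(\mathcal{B}_1)$. The main conceptual hurdle is the asymmetric case $0 \in A \setminus B$, where neither $x_0'$ nor the projection $P$ applied to $z$ supply the required estimate on $c\|x_0\|$, and one must exploit the supremum hypothesis on $\|x_0\|$ via $\mathcal{B}_1$-superdemocracy with a well-chosen singleton; the rest reduces to careful bookkeeping of sub-cases.
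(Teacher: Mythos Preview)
Your proposal is correct and follows essentially the same approach as the paper. Part~(a) is identical: the paper also splits on whether $0\in\mathcal{GS}_{\mathcal{B}_2,m}(x)$ and uses the same decomposition $x=x_0'(x)x_0+(x-x_0'(x)x_0)$ with the bounds $\|x_0'(x)x_0\|\le\|x\|$ and $\|x-x_0'(x)x_0\|\le 2\|x\|$. For part~(b) the paper runs the same four-way case split; your ``delicate'' observation that $c\|x_0\|\le K_{sd}(\mathcal{B}_1)\|z\|$ when $0\notin B$ is exactly the paper's preliminary inequality $\|x_0\|=\sup_{k\in\N}\|x_k\|\le K_{sd}(\mathcal{B}_1)\|\sum_{k\in D}a_kx_k\|$ (the $\varepsilon$-approximation is unnecessary since each $\|x_k\|\le K_{sd}(\mathcal{B}_1)\|z\|$ and one can pass to the supremum directly). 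In the case $0\in B\setminus A$ you exploit $c\|x_{i^*}\|\le c\|x_0\|\le\|z\|$ via the projection, whereas the paper bounds $\|x_{i_0}\|$ by superdemocracy against $B\setminus\{0\}$; both routes land within $4K_{sd}(\mathcal{B}_1)$. One small omission: you should also record the easy sub-case $0\in B\setminus A$ with $|A|\le|B\setminus\{0\}|$, where direct $\mathcal{B}_1$-superdemocracy against $B\setminus\{0\}$ plus $\|z-b_0x_0\|\le 2\|z\|$ suffices.
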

\begin{proof}
 To prove \ref{1947756}, fix  $x\in X$ and $m\in \N$. Then, we have
\begin{numcases}{\mathcal{G}_{\mathcal{B}_2,m}(x)=}
\mathcal{G}_{\mathcal{B}_1,m-1}(x-x_0'(x)x_0)+x_0'(x)x_0 & if $0\in
\mathcal{GS}_{\mathcal{B}_2,m}(x)$;\nonumber\\
\mathcal{G}_{\mathcal{B}_1,m}(x-x_0'(x)x_0) & otherwise.\nonumber
\end{numcases}
Thus,
$$
\|\mathcal{G}_{\mathcal{B}_2,m}(x)\|\le K_{1q}(\mathcal{B}_1)\|x-x_0'(x)x_0\|+\|x_0'(x)x_0\| \le
(2K_{1q}(\mathcal{B}_1)+1)\|x\|.
$$
It follows that $\mathcal{B}_2$ is quasi-greedy and $K_{1q}(\mathcal{B}_2)\le
2K_{1q}(\mathcal{B}_1)+1$.

To prove \ref{2947756}, suppose first that $D\subseteq \N_0$ is a finite nonempty set and take scalars
$(a_k)_{k\in D}$  with $|a_k|=1$ for each $k\in D$. If $0\in D$, then
\begin{equation}
\|x_0\|=\|x_0\||x_0'(\sum\limits_{k\in D}a_kx_k)|\le \|\sum\limits_{k\in D}a_kx_k\|.
\label{firstsd947756}
\end{equation}
On the other hand, if $0\not\in D$ we have
\begin{equation}
\|x_0\|=\sup\limits_{k\in\N}\|x_k\|\le K_{sd}(\mathcal{B}_1)\|\sum\limits_{k\in
D}a_kx_k\|.\label{secondsd947756}
\end{equation}
Now let $A, B\subseteq \N_{0}$ be finite nonempty sets with $|A|\le |B|$, and take $(a_i)_{i\in A}$, $(b_j)_{j\in B}$ scalars such that $|a_i|=|b_j|=1$ for all $i\in A, j\in B$. If $0\not\in A\cup B$,
there is nothing to prove. If $0\in A\setminus B$, by \eqref{secondsd947756} we have
\begin{align*}
\|\sum\limits_{i\in A}a_ix_i\|\le& \|x_0\|+\|\sum\limits_{i\in A\setminus \{0\}}a_ix_i\|\le
\|x_0\|+K_{sd}(\mathcal{B}_1)\|\sum\limits_{j\in B}b_jx_j\|\\
\le& 2K_{sd}(\mathcal{B}_1)\|\sum\limits_{j\in B}b_jx_j\|.
\end{align*}
If $0\in A\cap B$, by \eqref{firstsd947756} and \eqref{secondsd947756} we get
\begin{align*}
\|\sum\limits_{i\in A}a_ix_i\|\le& \|x_0\|+\|\sum\limits_{i\in A\setminus\{0\}}a_ix_i\|\nonumber\\
\le& \|\sum\limits_{j\in B}b_jx_j\|+K_{sd}(\mathcal{B}_1)\|\sum\limits_{j\in B\setminus
\{0\}}b_jx_j\|\nonumber\\
\le& (1+K_{sd}(\mathcal{B}_1))\|\sum\limits_{j\in B}b_jx_j\|+K_{sd}(\mathcal{B}_1)\|x_0\|\nonumber\\
\le& (1+2K_{sd}(\mathcal{B}_1))\|\sum\limits_{j\in B}b_jx_j\|.
\end{align*}
If $0\in B\setminus A$ and $|B|>1$, we choose $i_0\in A$ and using \eqref{firstsd947756} we get that
\begin{align*}
\|\sum\limits_{i\in A}a_ix_i\|\le& \|x_{i_0}\|+\|\sum\limits_{i\in A\setminus \{i_0\}}a_ix_i\|\le
2K_{sd}(\mathcal{B}_1)\|\sum\limits_{j\in B\setminus \{0\}}b_jx_j\|\nonumber\\
\le& 2K_{sd}(\mathcal{B}_1)(\|\sum\limits_{j\in B}b_jx_j\|+ \|x_0\|)\nonumber\\
 \le& 4K_{sd}(\mathcal{B}_1)\|\sum\limits_{j\in B}b_jx_j\|.
\end{align*}
The only case left is $A\not=B=\{0\}$.  Then $A=\{i_0\}$ for some $i_0\in \N$ and
\begin{equation*}
\|\sum\limits_{i\in A}a_ix_i\|=\|x_{i_0}\|\le \sup\limits_{k\in\N}\|x_k\|=\|x_0\|=\|\sum\limits_{j\in
B}b_jx_j\|.
\end{equation*}
From the above estimations, $\mathcal B_2$ is superdemocratic and $K_{sd}(\mathcal{B}_2)\le
4K_{sd}(\mathcal{B}_1)$.
\end{proof}
The following result will allow us to handle the case $\sigma_m(x)=0$ in the proof of
Proposition~\ref{propositionsemigreedyalmostgreedy327211partIIIc}.
\begin{lemma}\label{lemmanullapproximant397562}
Let $(x_i)_{i}\subseteq X$ be a fundamental minimal system with both $(x_i)_i$ and $(x_i')_i$ bounded. If $x\in X$  is such that $\sigma_m(x)=0$ for some $m\in \N$, then $|\supp{(x)}|\le m$ and $x=\G_m(x)=P_{\supp{(x)}}(x)$. 
\end{lemma}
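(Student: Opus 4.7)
The plan is to split the proof into two steps: first establish the bound $|\supp(x)|\le m$, then show that $x=P_{\supp(x)}(x)$ (from which $\mathcal{G}_m(x)=P_{\supp(x)}(x)$ is immediate when $|\supp(x)|\le m$).

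For the first step, the definition of $\sigma_m(x)=0$ yields a sequence $(y_n)_n$ with $|\supp(y_n)|\le m$, $y_n=P_{\supp(y_n)}(y_n)$, and $y_n\to x$. Setting $C:=\sup_j\|x_j'\|<\infty$ and $\eta_k:=|x_{i_k}'(x)|$ for any chosen indices $i_1,\dots,i_{m+1}\in\supp(x)$, continuity of each $x_i'$ gives $x_i'(y_n)\to x_i'(x)$, so for $n$ large $|x_{i_k}'(y_n)|>\eta_k/2>0$ for every $k$. This forces $\{i_1,\dots,i_{m+1}\}\subseteq\supp(y_n)$, contradicting $|\supp(y_n)|\le m$. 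Hence $|\supp(x)|\le m$, and the same argument shows $B:=\supp(x)\subseteq\supp(y_n)$ for all large $n$.

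For the second step, let $k:=|B|$ and $w:=x-P_B(x)$. By biorthogonality, $\supp(w)=\emptyset$. Continuity of the finite-rank projection $P_B$ gives $z_n:=y_n-P_B(y_n)\to w$, and for $n$ large, $\supp(z_n)\subseteq\supp(y_n)\setminus B$ has cardinality at most $m-k$. I claim that $w=0$. Suppose not. Passing to a subsequence, I may assume $|\supp(z_n)|=r$ is constant; enumerate $\supp(z_n)=\{a_{n,1},\dots,a_{n,r}\}$ and set $c_{n,j}:=x_{a_{n,j}}'(z_n)$. Since $|c_{n,j}|\le C\|z_n\|$ and $(z_n)$ is bounded, after further subsequencing I may assume for each $j$ that either $a_{n,j}$ is constantly equal to some $\alpha_j$ or $a_{n,j}\to\infty$, and that $c_{n,j}\to c_j$. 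For indices of the first type, $c_{n,j}=x_{\alpha_j}'(z_n)\to x_{\alpha_j}'(w)=0$, so these terms contribute a finite sum whose norm tends to $0$ (using that $(x_i)_i$ is bounded). Therefore the ``escaping'' part
\[
v_n:=\sum_{j:a_{n,j}\to\infty}c_j\,x_{a_{n,j}}
\]
also satisfies $v_n\to w$.

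To conclude, I extract a sparse subsequence $(n_l)$ so that the index sets $\{a_{n_l,j}:a_{n,j}\to\infty\}$ are pairwise disjoint across $l$, which is possible because each such $a_{n,j}$ tends to $\infty$. Then $v_{n_{l+1}}-v_{n_l}\to 0$. If some $c_{j_0}\ne 0$, applying the biorthogonal functional $x_{a_{n_l,j_0}}'$ gives $x_{a_{n_l,j_0}}'(v_{n_{l+1}}-v_{n_l})=-c_{j_0}$, whence $\|v_{n_{l+1}}-v_{n_l}\|\ge |c_{j_0}|/C$, contradicting $v_{n_{l+1}}-v_{n_l}\to 0$. If instead every $c_j=0$, then $v_n=0$ for all $n$, so $w=0$, again a contradiction. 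This proves $w=0$, i.e., $x=P_B(x)$, and the lemma follows.

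The main obstacle is that $(x_i)_i$ is only assumed to be a fundamental minimal system, not a Markushevich basis, so the identity $\supp(w)=\emptyset$ does not by itself force $w=0$. The subsequencing-and-disjointification argument is what exploits the additional information $\sigma_{m-k}(w)=0$, combined with the uniform bound on $(x_i')_i$, to reach the contradiction.
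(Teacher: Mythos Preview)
Your proof is correct, but the second step is considerably more elaborate than necessary. The paper handles it in a few lines: since $x-P_B(x)$ has empty support and $\sigma_{2m}(x-P_B(x))=0$, for any $\epsilon>0$ one can pick $A$ with $|A|=2m$ and scalars $(a_i)_{i\in A}$ with $\|(x-P_B(x))-\sum_{i\in A}a_ix_i\|\le\epsilon$; applying $x_l'$ for each $l\in A$ gives $|a_l|\le M\epsilon$ (because $x_l'(x-P_B(x))=0$), whence $\|\sum_{i\in A}a_ix_i\|\le 2mM^2\epsilon$ and so $\|x-P_B(x)\|\le (1+2mM^2)\epsilon$. Letting $\epsilon\to 0$ finishes immediately. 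Your subsequencing-and-disjointification argument ultimately encodes the same idea---coefficients of any finitely supported approximant to an element of empty support must vanish in the limit---but extracts it through a compactness-style mechanism rather than the direct coefficient estimate. The paper's route is shorter and makes the use of both boundedness hypotheses more transparent; yours is self-contained and does not require spotting the $\sigma_{2m}$ trick, at the cost of more bookkeeping. Step~1 in both arguments is essentially the same.
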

\begin{proof}
Let $B:=\supp(x)$. If $|B|>m$, there is $C\subset B$ with $|C|=m+1$. Thus, if $A\subset \N$ and $|A|\le m$, there is $j\in C\setminus A$. Then, for any scalars $(a_i)_{i\in A}$ it follows that 
$$
\|x-\sum\limits_{i\in A}a_ix_i\|\ge \frac{|x_j'(x-\sum\limits_{i\in A}a_ix_i)|}{\|x_j'\|}=\frac{|x_j'(x)|}{\|x_j'\|}\ge \frac{\min_{i\in C}|x_i'(x)|}{\max_{i\in C}\|x_i'\|}>0.
$$
Taking infimum over such sets and scalars, we get a contradiction to the hypothesis that $\sigma_m(x)=0$.  Now let 
$$
M:=\sup_{i}\{\|x_i\|,\|x_i'\|\}.
$$
Given that $\sigma_m(x)=0$ and $|B|\le m$, we have $\sigma_{2m}(x-P_B(x))=0$. Fix $\epsilon>0$ and choose $A\subset X$ with $|A|=2m$ and scalars $(a_i)_{i\in A}$ so that 
$$
\|x-P_B(x)-\sum\limits_{i\in A}a_ix_i\|\le \epsilon.
$$ 
For each $l\in A$, we have
\begin{equation*}
|a_l|=|x_l'(x-P_B(x)-\sum\limits_{i\in A}a_ix_i)|\le M\|x-P_B(x)-\sum\limits_{i\in A}a_ix_i\|\le M\epsilon. 
\end{equation*}
Hence,
$$
\|x-P_B(x)\|\le \epsilon+\|\sum\limits_{i\in A}a_ix_i\|\le \epsilon+\sum\limits_{i\in A}M\epsilon\|x_i\|\le \epsilon+2mM^2\epsilon.
$$
Since $\epsilon$ is arbitrary and $m$, $M$ are fixed, we get $x=P_B(x)$, and thus $x=\G_m(x)$. 
\end{proof}

Now we can show that a weak semi-greedy system that is not a Markushevich basis can be slightly modified to obtain a Markushevich basis (and therefore, by Theorem~\ref{theoremsemigreedyalmostgreedy327211partI}, an  almost greedy  system).

\begin{proposition}\label{propositionsemigreedyalmostgreedy327211partIIIc}  Let $0<\tau\le 1$, and let $\mathcal{B}:=(x_i)_{i}\subseteq X$ be a WSG($\tau$) system that is not a Markushevich basis, with constant $K_{ws}(\tau,\mathcal{B})$. There are $x_0\in X$ and $x_0'\in X'$ such that
\begin{equation}
\overline{\{x_i\}}^{w}_{i\in\N}\subseteq \{x_i\}_{i\in\N}\cup [x_0],\nonumber
\end{equation}
and the system
\begin{equation}
\mathcal{B}_1:=(x_0,x_i-x_0'(x_i)x_0)_{i\in \N}\nonumber
\end{equation}
is an almost greedy Markushevich basis for $X$ with biorthogonal functionals  $(x_0', x_i')_{i}$. In addition, $\mathcal{B}_1$ has first quasi-greedy constant
$$
K_{1q}(\mathcal{B}_1)\le 3+4K_{ws}(\tau,\mathcal{B})+16K_{ws}(\tau,\mathcal{B})^2\tau^{-2},
$$
and superdemocracy constant 
$$
K_{sd}(\mathcal{B}_1)\le 32K_{ws}(\tau,\mathcal{B})^2\tau^{-2}.
$$
\end{proposition}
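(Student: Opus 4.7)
The plan is to construct $x_0$ and $x_0'$ from a suitable weak limit of $(x_i)_i$ and then transfer the WSG property from $\mathcal B$ to the modified subsystem $(\tilde x_i)_i$, where $\tilde x_i := x_i - x_0'(x_i)x_0$, viewed in $\tilde X := \ker(x_0')$.

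Since $\mathcal{B}$ is WSG($\tau$) but not a Markushevich basis, the equivalences in Theorem~\ref{theoremsemigreedyalmostgreedy327211partI} imply that $\mathcal{B}$ does not have the FDSP, so by Corollary~\ref{corollarywhenFDSP} and the seminormalization from Lemma~\ref{lemmaseminormalizedsemigreedy294942}, $(x_i)_i$ has no basic subsequence, whence $\overline{\{x_i\}}^w_{i\in\N}$ is weakly compact and $0\notin \overline{\{x_i\}}^w_{i\in\N}$. By Eberlein--\v{S}mulian I pick a subsequence $x_{i_n} \xrightarrow{w} x_0$; since $x_{i_n}\neq x_j$ eventually for any fixed $j$, applying $x_j'$ yields $x_0\in A:=\bigcap_{i\in \N}\ker(x_i')$, and $x_0\neq 0$. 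Via Hahn--Banach I rescale and choose $x_0'\in X'$ with $x_0'(x_0)=1$ so that $\|x_0\|\|x_0'\|=1$ and $\|x_0\|=\sup_i\|\tilde x_i\|$ (this is consistent because $\tilde x_i$ is invariant under the one-parameter rescaling $x_0\mapsto \alpha x_0, x_0'\mapsto \alpha^{-1}x_0'$). Set $\mathcal{B}_1 := (x_0,\tilde x_i)_i$. Biorthogonality with $(x_0',x_i')_i$ is direct from $x_i'(x_0)=0$ and $x_0'(x_0)=1$, and fundamentality is immediate. Since $x_{i_n}\xrightarrow{w} x_0$ and $x_0'$ is continuous, $\tilde x_{i_n}\xrightarrow{w} 0$; combined with $|x_{i_n}'(\tilde x_{i_n})|=1$ and the boundedness of $(x_i')_i$, the sequence $(\tilde x_{i_n})_n$ is seminormalized. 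Lemma~\ref{lemmabasic238382} and Remark~\ref{remarkbasic323737} then produce a basic subsequence of $(\tilde x_{i_n})_n$ with FDSP constant $1$, which also witnesses FDSP of $(\tilde x_i)_i$ in $\tilde X$ with $M_{fs}=1$.

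The key step is to show $(\tilde x_i)_i$ is WSG($\tau$) in $\tilde X$ with constant at most $2K_{ws}(\tau,\mathcal B)$. Given $\tilde y \in \tilde X$ and $m \in \N$, fix $\epsilon>0$ and pick a near-optimal $\tilde X$-approximant $\sum_{i \in S} c_i \tilde x_i$ of support size $|S| \le m$ with error at most $\sigma_m^{\tilde X}(\tilde y)+\epsilon$. Set $\mu^* := \sum_{i\in S} c_i x_0'(x_i)$ and $y^* := \tilde y + \mu^* x_0 \in X$. Since $\sum_{i\in S} c_i x_i = \sum_{i\in S} c_i\tilde x_i + \mu^* x_0$, we obtain $\|y^* - \sum c_i x_i\| = \|\tilde y - \sum c_i \tilde x_i\|$, yielding $\sigma_m^{\mathcal B}(y^*) \le \sigma_m^{\tilde X}(\tilde y)$ after $\epsilon \to 0$. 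Applying WSG of $\mathcal B$ to $y^*$ produces a weak thresholding set $\mathcal W\subseteq \N$ of size $m$ and $z = \sum_{i\in \mathcal W} a_i x_i$ with $\|y^*-z\|\le K_{ws}(\tau,\mathcal B)\sigma_m^{\tilde X}(\tilde y)$. Because $x_i'(x_0)=0$, we have $x_i'(y^*) = x_i'(\tilde y)$ for every $i\in \N$, so $\mathcal W$ is also a weak thresholding set for $\tilde y$ with respect to $(\tilde x_i)_i$. Setting $\tilde z := z - x_0'(z)x_0 = \sum_{i\in \mathcal W} a_i \tilde x_i \in [\tilde x_i : i\in \mathcal W]$, the estimate $|x_0'(z)-\mu^*|=|x_0'(z-y^*)| \le \|x_0'\|\|z-y^*\|$ combined with $\|x_0\|\|x_0'\|=1$ gives $\|\tilde y - \tilde z\| \le \|y^*-z\| + |x_0'(z)-\mu^*|\|x_0\| \le 2\|y^*-z\| \le 2K_{ws}(\tau,\mathcal B)\sigma_m^{\tilde X}(\tilde y)$, as required.

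Applying Theorem~\ref{theoremsemigreedyalmostgreedy327211partI} to $(\tilde x_i)_i$ in $\tilde X$ now shows that it is an almost greedy Markushevich basis for $\tilde X$, and (writing $K_{ws}:=K_{ws}(\tau,\mathcal B)$) yields $K_{2q}((\tilde x_i)) \le 2K_{ws} + 8K_{ws}^2\tau^{-2}$ and $K_{hd}((\tilde x_i)) \le 8K_{ws}^2\tau^{-2}$. Markushevich-ness of $(\tilde x_i)_i$ in $\tilde X$ forces $A \cap \tilde X = \{0\}$, which together with $x_0\in A$ and $x_0'(x_0)=1$ gives $A=[x_0]$; since weak accumulation points of $(x_i)_i$ lie in $A$, this produces both $\overline{\{x_i\}}^w_{i\in\N} \subseteq \{x_i\}_{i\in\N} \cup [x_0]$ and totality of $(x_0',x_i')_i$ in $X$, making $\mathcal B_1$ a Markushevich basis. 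The stated bounds for $\mathcal B_1$ follow from Lemma~\ref{lemmastillalmostgreedy947756} applied to the adjunction of $x_0$ to $(\tilde x_i)_i$, using $K_{1q}((\tilde x_i)) \le 1 + K_{2q}((\tilde x_i))$ and $K_{sd}((\tilde x_i)) \le K_{hd}((\tilde x_i))$: this yields $K_{1q}(\mathcal B_1) \le 2(1+2K_{ws}+8K_{ws}^2\tau^{-2})+1 = 3 + 4K_{ws} + 16K_{ws}^2\tau^{-2}$ and $K_{sd}(\mathcal B_1) \le 4\cdot 8 K_{ws}^2\tau^{-2} = 32K_{ws}^2\tau^{-2}$. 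The main obstacle is the WSG transfer in the third paragraph, which hinges on the specific perturbation $y^* = \tilde y + \mu^* x_0$ chosen precisely so that $\sigma_m^{\mathcal B}(y^*) \le \sigma_m^{\tilde X}(\tilde y)$, since the reverse inequality $\sigma_m^{\mathcal B}(\tilde y) \le C\sigma_m^{\tilde X}(\tilde y)$ is in general false.
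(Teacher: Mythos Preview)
Your proof is correct and follows essentially the same route as the paper's: extract a nonzero weak limit via weak compactness, project onto the kernel of a norming functional, transfer the WSG property to the projected system with a factor~$2$ (your estimate $\|\tilde y-\tilde z\|\le 2\|y^*-z\|$ is exactly the paper's $\|P\|\le 2$ bound), exploit the resulting weakly null subsequence to obtain $M_{fs}=1$, apply Theorem~\ref{theoremsemigreedyalmostgreedy327211partI}, and conclude with Lemma~\ref{lemmastillalmostgreedy947756}. The one small point you elide is the case $\sigma_m^{\tilde X}(\tilde y)=0$ in the WSG transfer (your $\epsilon$-argument produces approximants with error $\le 2K_{ws}\epsilon$ but not an exact representation on a weak thresholding set); the paper handles this separately via Lemma~\ref{lemmanullapproximant397562}, and the fix is immediate.
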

\begin{proof}
By Theorem~\ref{theoremsemigreedyalmostgreedy327211partI} and Corollary~\ref{corollarywhenFDSP}, the set $\overline{\{x_i\}}^{w}_{i\in\N}$ is weakly compact, and $0\not \in \overline{\{x_i\}}^{w}_{i\in\N}$. Then,
there is a subnet $(x_{i_{\lambda}})_{\lambda}$ and $u_0\in X\setminus \{0\}$ such that
$$
x_{i_{\lambda}}\xrightarrow{w}u_0.
$$
By the Hahn--Banach Theorem, there is $u_0'\in X'$ such that
$$
\|u_0\|\|u_0'\|= 1 \quad \text{and} \quad u_0'(u_0)= 1.
$$
Now  for  $x\in X$ define the linear operator $P\colon X\rightarrow X$ by
$$
P(x):=x-u_0'(x)u_0.
$$
It is easy to check that $P$ is a projection, $\|P\|\le 2$, $X=[u_0]\oplus P(X)$
and, as $x_j'(u_0)=0$ for all $j\in \N$,  
$$
\mathcal{B}_2:=(x_i-u_0'(x_i)u_0)_{i\in \N}
$$
is a fundamental minimal system for $P(X)$ with biorthogonal functionals  $(x_i'\big|_{P(X)})_{i}$.\\
First, we show that $\mathcal{B}_2$ is a WSG($\tau$) system for $P(X)$.
Take $y\in P(X)$ and $m\in \N$, and fix $\epsilon>0$. If $\sigma_{\mathcal{B}_2,m}(y)\not=0$ we choose
a set $A\subseteq \N$ with $|A|=m$ and scalars $(a_i)_{i\in A}$ so that
$$
\|y-\sum\limits_{i\in A}a_i(x_i-u_0'(x_i)u_0)\|\le (1+\epsilon)\sigma_{\mathcal{B}_2,m}(y),
$$
and define
$$
z:=y+\sum\limits_{i\in A}a_iu_0'(x_i)u_0.
$$
By hypothesis, there is a set $\W^{\tau}_{\mathcal{B}}(z,m)$ and scalars $(b_j)_{j\in
\W^{\tau}_{\mathcal{B}}(z,m)}$ such that
$$
\|z-\sum\limits_{j\in \W^{\tau}_{\mathcal{B}}(z,m)}b_jx_j\|\le
K_{ws}(\tau,\mathcal{B})\sigma_{\mathcal{B},m}(z).
$$
Then, it follows that
\begin{align*}
\|y-\sum\limits_{j\in \W^{\tau}_{\mathcal{B}}(z,m)}b_j(x_j-u_0'(x_j)u_0)\|& =\|P(z-\sum\limits_{j\in
\W^{\tau}_{\mathcal{B}}(z,m)}b_jx_j)\|\nonumber\\
&\le 2\|z-\sum\limits_{j\in \W^{\tau}_{\mathcal{B}}(z,m)}b_jx_j\|\nonumber\\
&\le 2K_{ws}(\tau,\mathcal{B})\|y-\sum\limits_{i\in A}a_i(x_i-u_0'(x_i)u_0)\|\nonumber\\
&\le 2(1+\epsilon)K_{ws}(\tau,\mathcal{B})\sigma_{\mathcal{B}_2,m}(y).
\end{align*}
Since $x_i'(y)=x_i'(z)$ for every $i\in\N$, the set $\W^{\tau}_{\mathcal{B}}(z,m)$ is also a weak thresholding set for $y$ with respect to $\mathcal{B}_2$, and we obtain the estimate

\begin{equation}
\|y-\sum\limits_{j\in \W^{\tau}_{\mathcal{B}_2}(y,m)}b_j(x_j-u_0'(x_j)u_0)\|\le
2(1+\epsilon)K_{ws}(\tau,\mathcal{B})\sigma_{\mathcal{B}_2,m}(y).\label{tausemigreedy327211IIIc}
\end{equation}
Now suppose that $\sigma_{\mathcal{B}_2,m}(y)=0$. By Lemma~\ref{lemmaseminormalizedsemigreedy294942}, both $(x_i)_i$ and $(x_i')_i$ are seminormalized. The former implies that $(x_i-u_0'(x_i)u_0)_i$ is bounded, so by Lemma~\ref{lemmanullapproximant397562}, we have
$$
y=\mathcal{G}_{\mathcal{B}_2,m}(y)=\sum\limits_{j\in
\mathcal{GS}_{\mathcal{B}_2,m}(y)}x_j'(y)(x_j-u_0'(x_j)u_0).
$$
Hence, \eqref{tausemigreedy327211IIIc} holds also in this case taking $\W^{\tau}_{\mathcal{B}_2}(y,m):=\mathcal{GS}_{\mathcal{B}_2,m}(y)$ and $b_j:=x_j'(y)$ for all $j\in \W^{\tau}_{\mathcal{B}_2}(y,m)$. Then we conclude that $\mathcal{B}_2$
is a WSG($\tau$) system for $P(X)$ with constant
$$
K_{ws}(\tau,\mathcal{B}_2)\le 2(1+\epsilon)K_{ws}(\tau,\mathcal{B}).
$$
As $x_{i_{\lambda}}\xrightarrow{w}u_0$, we get that $
x_{i_\lambda}-u_0'(x_{i_\lambda})u_0 \xrightarrow{w} 0$.
Then, by Corollary~\ref{corollarywhenFDSP}, Theorem~\ref{theoremsemigreedyalmostgreedy327211partI} and Proposition~\ref{propositionseparation}\ref{weakzero}, it follows that
$\mathcal{B}_2$ is an almost greedy Markushevich basis for $P(X)$, with first quasi-greedy constant
\begin{align}
K_{1q}(\mathcal{B}_2)\le& 1+K_{ws}(\tau,\mathcal{B}_2)+2K_{ws}(\tau,\mathcal{B}_2)^2\tau^{-2}\nonumber\\
\le&
1+2(1+\epsilon)K_{ws}(\tau,\mathcal{B})+8(1+\epsilon)^2K_{ws}(\tau,\mathcal{B})^2\tau^{-2}\label{fqg327211IIIc}
\end{align}
and with superdemocracy constant 
\begin{equation}
K_{sd}(\mathcal{B}_2)\le 2K_{ws}(\tau,\mathcal{B}_2)^2\tau^{-2}\le
8(1+\epsilon)^2K_{ws}(\tau,\mathcal{B})^2\tau^{-2}.
\label{sd327211IIIc}
\end{equation}
Now set
$$
a_0:=\frac{1}{\|u_0\|}\sup\limits_{i\in \N}\|x_i-u_0'(x_i)u_0\|,\qquad x_0:=a_0u_0,\qquad\text{and} \qquad x_0':=a_0^{-1}u_0'.
$$ 
Since 
$$
\|x_0\|\|x_0'\|=1, \quad \|x_0\|=\sup\limits_{i\in\N}\|x_i-x_0'(x_i)x_0\|\quad\text{and}\qquad \mathcal{B}_2=(x_i-x_0'(x_i)x_0)_{i},
$$
we may apply Lemma~\ref{lemmastillalmostgreedy947756}. Letting $\epsilon\rightarrow 0$ in \eqref{fqg327211IIIc} and \eqref{sd327211IIIc}, an application of the lemma gives that $\mathcal{B}_1$ is a quasi-greedy Markushevich basis for $X$, with first quasi-greedy constant
$$
K_{1q}(\mathcal{B}_1)\le 2K_{1q}(\mathcal{B}_2)+1\le
3+4K_{ws}(\tau,\mathcal{B})+16K_{ws}(\tau,\mathcal{B})^2\tau^{-2},
$$
and it is superdemocratic with constant
$$
K_{sd}(\mathcal{B}_1)\le 4K_{sd}(\mathcal{B}_2)\le 32K_{ws}(\tau,\mathcal{B})^2\tau^{-2}.
$$
To finish the proof, let $v\in X$ and suppose there is a subnet $(x_{i_\gamma})_{\gamma}$ such that
$$
x_{i_\gamma}\xrightarrow{w}v.
$$
It is immediate that $x_j'(v)=0$ for all $j\in \N$. Then, as $\mathcal{B}_1$ is a Markushevich basis for $X$ we get that $v-x_0'(v)x_0=0$. This proves that $\overline{\{x_i\}}^{w}_{i\in\N}\subseteq \{x_i\}_{i\in\N}\cup [x_0]$.
\end{proof}

\section{Finite dimensional spaces and branch greedy algorithms.}\label{sectionfiniteandBTA}

In this section, we study the semi-greedy and almost greedy properties - and some weaker versions thereof - in finite dimensional Banach spaces, where   
each biorthogonal system is a greedy Schauder basis. Here, the questions concerning (weak) thresholding or Chebyshev greedy algorithms  focus on the behavior and the relationships of their natural associated constants. We will consider branch semi-greedy and branch almost greedy bases, introduced and studied by Dilworth, Kutzarova, Schlumprecht and Wojtaszczyk in \cite{Dilworth2012}, and extend some of their results. 
\\
Let us present the \emph{branch} versions of  the (weak) thresholding and Chebyshev greedy algorithms. For a fixed weakness parameter $0<\tau< 1$ and a Markushevich basis $(x_i)_{i}$ with seminormalized coordinates, the algorithm is defined as follows. First, set
$$
\mathcal{A}^{\tau}(x):=\{ i\in\N\colon |e_i'(x)|\ge \tau\max\limits_{j\in \N}|e_j'(x)|\},
$$
and let $\mathcal{G}^{\tau}\colon X\setminus \{0\}\rightarrow \N$ be a function with the following
properties:
\begin{enumerate}[\upshape (BG1)]
\item \label{firstbranchgreedy} $\mathcal{G}^{\tau}(x)\in \mathcal{A}^{\tau}(x)$ for every $x\in
    X\setminus \{0\}$.
\item \label{secondbranchgreedy} $\mathcal{G}^{\tau}(\lambda x)=\mathcal{G}^{\tau}(x)$ for all $x\in
    X\setminus \{0\}$ and all $\lambda \in \mathbb{K}\setminus \{0\}$.
\item \label{thirdbranchgreedy} If $\mathcal{A}^{\tau}(x)=\mathcal{A}^{\tau}(y)$ and
    $e_i'(x)=e_i'(y)$ for all $i\in \mathcal{A}^{\tau}(x)$, then
    $\mathcal{G}^{\tau}(x)=\mathcal{G}^{\tau}(y)$.
\end{enumerate}
For each $x\not=0$, this defines a function $\rho_x^{\tau}\colon \{ 1,\dots,|\supp(x)|\}\rightarrow
\N$ if $|\supp(x)|<\infty$ or $\rho_x^{\tau}\colon \N\rightarrow \N$ otherwise, given by
$\rho^{\tau}_{x}(1):=\mathcal{G}^{\tau}(x)$, and for $2\le i\le |\supp(x)|$,
$$
\rho^{\tau}_{x}(i):=\mathcal{G}^{\tau}(x-\sum\limits_{j=1}^{i-1}x_{\varrho_{x}^{\tau}(j)}'(x)x_{\varrho_{x}^{\tau}(j)}).
$$
Similarly, for every $x\in X\setminus \{0\}$ and $m\in\N$, the \emph{$m$-term branch greedy
approximation} to $x$ (with regard to a fixed branch greedy algorithm)  is defined as
$$
\mathcal{G}_m^{\tau}(x):= \sum\limits_{i=1}^{m}x_{\varrho_{x}^{\tau}(i)}'(x)x_{\varrho_{x}^{\tau}(i)},
\label{taugreedyapproximation}
$$
setting $x_{\varrho_{x}^{\tau}(i)}'(x)x_{\varrho_{x}^{\tau}(i)}:=0$ if $i>\max{(\supp(x))}$, and $\mathcal{G}_0^{\tau}(x):=0$.
The idea of choosing a branch associated to a weakness parameter $\tau$ is applied to different concepts (see \cite{Dilworth2015}). 
\begin{definition}\cite[Definition 6.1]{Dilworth2012}\label{definitionbranchalmostgreedy583830}
Let $N\in \N$, and let $E$ be a $N$-dimensional Banach space with a fundamental minimal system
$(x_i')_{1\le i\le N}\subseteq E$. The system is called \emph{branch almost-greedy} with weakness
parameter $0<\tau\le 1$ (BAG($\tau$)) and constant $M$ if, for every $x\in E$ and every $0\le m\le N$,
we have
$$
\|x-\mathcal{G}_m^{\tau}(x)\|\le M\widetilde{\sigma}_{m}(x).
$$
\end{definition}

\begin{definition}\cite[Definition 7.3]{Dilworth2012}
\label{definitionbranchsemigreedy457573}
Let $N\in \N$, and let $E$ be a $N$-dimensional Banach space with fundamental minimal system
$(x_i)_{1\le i\le N}\subseteq E$. The system is called \emph{branch semi-greedy} with weakness
parameter $0<\tau< 1$ (BSG($\tau$)) and constant $M$ if, for every $x\in E$ and every $1\le m\le N$,
there are scalars $(a_i)_{1\le i\le m}$ such that
$$
\|x-\sum\limits_{i=1}^{m}a_ix_{\varrho_{x}^{\tau}(i)}\|\le M\sigma_m(x).
$$
\end{definition}

\begin{remark}Note that if we consider the definition of WAG($\tau$) systems in the finite dimensional context, it is immediate that every BAG($\tau$) system with constant $M$ is also WAG($\tau$) with constant no greater than $M$. The same relation exists between WSG($\tau$) and BSG($\tau$) systems. 
\end{remark}

\begin{remark}\label{remarkgreedybranchgreedy039434} Also, note that the greedy ordering provides a branch greedy algorithm with parameter $\tau$ for every $0<\tau<1$. We can simply define
$$
\mathcal{G}^{\tau}(x):= \rho(x,1)
$$
for all $x\in X$. 
It is easy to check that $\mathcal{G}^{\tau}$ satisfies \ref{firstbranchgreedy}, \ref{secondbranchgreedy}, and \ref{thirdbranchgreedy} for all $0<\tau<1$. 
\end{remark}

Every BAG$(\tau)$ system with constant $M$ has quasi-greedy, democratic and almost greedy constants depending only on $M$ and $\tau$ \cite[Theorem 6.4, Corollary 6.5]{Dilworth2012}. Also, for an almost greedy system, the conditions of Definition~\ref{definitionweaksemigreedy747473} hold for all $x\in X$, $m\in \N$, and \emph{every} weak thresholding set $\W^{\tau}(x,m)$, with $M$ depending only on the first quasi-greedy constant and $\tau$ (\cite[Theorem 7.1]{Dilworth2012}). This implies immediately that it is BSG($\tau$), and that every branch of the algorithm satisfies the  BSG condition.\\
Going in the opposite direction,  that is from the BSG($\tau$) to the almost greedy (or, equivalently, the BAG($\tau$)) property, it was proved in \cite[Theorem 7.7]{Dilworth2012} that the almost greedy constant can be controlled by the BSG($\tau$) constant, $\tau$, the basis constant, and the cotype constant of the space (\cite[Theorem 7.7]{Dilworth2012}. In the same paper, the authors left open the question of whether the BSG($\tau$) property implies in general the BAG($\tau$) property, that is if the constant of the latter can be controlled by that of the former (see the question below \cite[Definition 7.3]{Dilworth2012}). Now we are in a position to answer that question and extend \cite[Theorem 7.7]{Dilworth2012}. \\
First, note that in Example~\ref{exampleseminotalmost493821}, we did not use that the space is infinite dimensional to prove any of the bounds for the constants of the system, except for the lower bound for the WAG($\tau$) constants, for which we used Proposition~\ref{propositionwagag394995}. The proofs of the rest of the bounds hold if we replace $\ell_1$ with $\ell_1^{n}$ for any $n\ge 3$. The system in Example~\ref{exampleseminotalmost493821} has semi-greedy constant no greater than $4$, and in fact, by \ref{semigreedy493821}, all branches of the algorithm satisfy the BSG($\tau$) condition with constant no greater than $4\tau^{-1}$. Hence, \ref{l1493821} and \ref{quasigreedy493821} show that there is no upper bound for the democracy, quasi-greedy or almost greedy constant that depends only on the semi-greedy or the BSG($\tau$) constant and $\tau$. Thus, by \cite[Theorem 6.4]{Dilworth2012} and \cite[Corollary 6.5]{Dilworth2012}, it follows that there is no such upper bound for the BAG($\tau$) constant, either.

Second, it is possible to remove the cotype condition from \cite[Theorem 7.7]{Dilworth2012}, and also extend the result to any WSG($\tau$) system. To do so, next we provide a bound for the second quasi-greedy constant of such systems. For the proof we combine ideas from the proofs of \cite[Theorem 1.10]{Berna2019} and Theorem~\ref{theoremsemigreedyalmostgreedy327211partI} with further arguments that allow us to handle the finite dimensional case.

\begin{theorem}\label{Theoremfinitewsg397311} Let $N\in \N_{>1}$ and $E$ be a $N$-dimensional Banach space with  a WSG($\tau$) basis $(x_i')_{1\le i\le N}$, $0<\tau\le 1$. If $(x_i)_{1\le i\le N}$ has WSG($\tau$) constant
$K_{ws}(\tau)$ and basis constant $K_b$, then $(x_i)_{1\le i\le N}$ is quasi-greedy with second
quasi-greedy constant
$$
K_{2q}\le 5K_b^2K_{ws}(\tau)+6 K_b^3K_{ws}(\tau)^2\tau^{-2}.
$$
\end{theorem}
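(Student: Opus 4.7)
The strategy is to adapt the proof of Theorem~\ref{theoremsemigreedyalmostgreedy327211partI} to the finite-dimensional setting, with the basis constant $K_b$ playing the role of the finite-dimensional separation constant $M_{fs}$. Concretely, since $(x_i)_{1\le i\le N}$ is a Schauder basis with constant $K_b$, whenever $y$ is supported on an initial segment $\{1,\dots,k\}$ and $z$ on $\{k+1,\dots,N\}$, the inequality $\|y\|=\|P_{\{1,\dots,k\}}(y+z)\|\le K_b\|y+z\|$ substitutes cleanly for the separation inequality~\eqref{separation} used in the infinite-dimensional argument. The rest of the proof then closely follows the earlier construction.

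Mirroring Theorem~\ref{theoremsemigreedyalmostgreedy327211partI}, I fix $x\in E$ and $m$ with $\mathcal{G}_m(x)\ne 0$, let $n\le m$ be the largest index with $x'_{\rho(x,n)}(x)\ne 0$ (so that $\mathcal{G}_n(x)=\mathcal{G}_m(x)$), and assume $n<|\supp(x)|$, since otherwise $\mathcal{G}_n(x)=x$. Choosing a set $\mathcal{W}$ of $n$ basis indices disjoint from $\mathcal{GS}_n(x)$ and placed strictly after $\supp(x)$ in the basis order whenever this is possible, I form the two auxiliary vectors
\[
z := x-\mathcal{G}_n(x)+(1+\epsilon)\tau^{-1}|x'_{\rho(x,n)}(x)|\sum_{j\in\mathcal{W}}x_j-\sum_{j\in\mathcal{W}}x'_j(x)x_j
\]
and $w:=(1-\epsilon)\tau|x'_{\rho(x,n)}(x)|\sum_{j\in\mathcal{W}}x_j$. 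The coefficients are arranged exactly so that $\mathcal{W}$ is the unique $n$-weak $\tau$-thresholding set for $z$, and $\mathcal{GS}_n(x)$ is an $n$-weak $\tau$-thresholding set for $x+w$. Applying the WSG($\tau$) property twice produces approximants $u\in[x_j:j\in\mathcal{W}]$ with $\|z-u\|\le K_{ws}(\tau)\sigma_n(z)$ and $v$ supported in $\mathcal{GS}_n(x)$ with $\|x+w-v\|\le K_{ws}(\tau)\|x\|$. The basis-constant inequality, applied to the prefix/tail decompositions of $z-u=(x-\mathcal{G}_n(x))+(\text{vector on }\mathcal{W})$ and of $(x-v)+w$, then yields $\|x-\mathcal{G}_n(x)\|\le K_b\|z-u\|$ and $\|w\|\le (1+K_b)K_{ws}(\tau)\|x\|$. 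Combining these with $\sigma_n(z)\le\|z+\mathcal{G}_n(x)\|\le\|x\|+(1+\epsilon)\tau^{-1}|x'_{\rho(x,n)}(x)|\|\sum_{j\in\mathcal{W}}x_j\|$ and letting $\epsilon\to 0$ chains exactly as in the infinite-dimensional proof to produce the desired upper bound on $K_{2q}$.

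The main obstacle is that in finite dimensions $\mathcal{W}$ cannot always be placed strictly after $\supp(x)$ in the basis order: when $|\supp(x)|+n>N$ there simply are not enough indices. In that regime $\mathcal{W}$ has to overlap with $\supp(x)$ (while remaining disjoint from $\mathcal{GS}_n(x)$), and the basis-constant inequality must be applied to non-interval projections, typically via telescopes $P_A=P_{\{1,\dots,\max A\}}-P_{\{1,\dots,\max A\}\setminus A}$, at a cost of one extra factor of $K_b$ per use. These additional applications of the basis constant account for the extra $K_b$ factor in each term of the stated bound, namely the $K_b^2$ and $K_b^3$ in place of the $K_b$ and $K_b^2$ that a direct finite-dimensional translation of Theorem~\ref{theoremsemigreedyalmostgreedy327211partI} would yield, as well as for the numerical constants $5$ and $6$.
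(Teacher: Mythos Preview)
Your proposal has a genuine gap in the ``hard case'' that you only sketch in the final paragraph, and the hand-waving there does not resolve the real difficulties.

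First, even in the easy case your argument is slightly loose: the WSG($\tau$) property only guarantees an approximant $v$ supported on \emph{some} weak thresholding set for $x+w$, not on $\mathcal{GS}_n(x)$. When $\mathcal{W}$ lies after $\supp(x)$ this is harmless, since one checks that every weak thresholding set for $x+w$ avoids $\mathcal{W}$ and hence lies in $\supp(x)$; the basis-constant separation then still applies. But in the overlap regime this repair is not available. If $j\in\mathcal{W}\cap\supp(x)$ with $|x_j'(x)|$ close to $|x'_{\rho(x,n)}(x)|$, then $|x_j'(x+w)|$ can be as large as $(1+(1-\epsilon)\tau)|x'_{\rho(x,n)}(x)|$, which for $\tau$ near $1$ exceeds the coefficients on $\mathcal{GS}_n(x)$; such a $j$ may then be forced into every weak thresholding set for $x+w$, so the approximant $v$ can be supported on $\mathcal{W}$, and neither a prefix/tail nor a telescoped projection separates $x-v$ from $w$. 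Second, your bound $\sigma_n(z)\le\|x\|+(1+\epsilon)\tau^{-1}|x'_{\rho(x,n)}(x)|\|\sum_{j\in\mathcal{W}}x_j\|$ drops the term $-P_{\mathcal W}(x)$; this is legitimate only when the coordinates on $\mathcal{W}$ are negligibly small (as arranged in the infinite-dimensional proof), not when $\mathcal{W}\subset\supp(x)$. The vague ``telescoping at a cost of one extra factor of $K_b$'' does not account for either of these obstructions, and in particular does not produce the stated constants.

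The paper avoids the overlap problem entirely by a different decomposition: it splits the index set into two halves $A_1=\{1,\dots,N_1\}$ and $A_2=\{N_1+1,\dots,N\}$ with $N_1=\lfloor (N+1)/2\rfloor$, writes $\mathcal{G}_m(x)=\mathcal{G}_{m_1}(P_{A_1}x)+\mathcal{G}_{m_2}(P_{A_2}x)$, and then runs your easy-case argument for $P_{A_1}x$ using an auxiliary set inside $A_2$ (and symmetrically). Because $m_i\le |A_{3-i}|$ (the boundary case $m_i=|A_i|$ being trivial since then $\mathcal{G}_{m_i}(P_{A_i}x)=P_{A_i}x$), the auxiliary set is always genuinely disjoint from the support of the vector being treated, and the basis-constant separation is just the interval projection onto $A_1$ or $A_2$. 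This is the missing idea: rather than trying to cope with overlap, one arranges that it never occurs.
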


\begin{proof}

Let $N_1:= \floor{\frac{N+1}{2}}$, and consider the finite sets $A_1:=\{j \in \N\colon 1\le j\le
N_1\}$ and $A_2:=\{j\in \N\colon N_1<j\le N\}$. Now, for all $x\in E$ and $i=1,2$ define the  projection operators
$$
P_{i}(x):=\sum\limits_{j\in A_i}x_j'(x)x_j.
$$
Fix $x\in E$ and $1\le m\le N$, assuming without loss of generality that $\G_m(x)\not=x$ (else, there is
nothing to prove). Set
$$
m_1:=|A_1\cap \mathcal{GS}_{m}(x)|\quad \text{and}\quad
m_2:=|A_2\cap \mathcal{GS}_{m}(x)|.
$$
Note that
$$
\mathcal{G}_{m}(x)=\mathcal{G}_{m_1}(P_1(x))+\mathcal{G}_{m_2}(P_2(x)).
$$
Thus,
\begin{equation}
\|x-\mathcal{G}_{m}(x)\|\le
\|P_1(x)-\mathcal{G}_{m_1}(P_1(x))\|+\|P_2(x)-\mathcal{G}_{m_2}(P_2(x))\|.\label{splitting397311}
\end{equation}
Let us consider first the case in which $m_1\not=0$ and $m_2\not=0$. Since
$x\not=\mathcal{G}_{m}(x)$, it follows that $x_{\rho(P_i(x),m_i)}'(P_i(x))\not=0$ for $1\le i\le 2$.
Fix $0<\xi<1$, and let
\begin{align*}
y_1:=&\tau^{-1}(1+\xi)|x_{\rho(P_1(x),m_1)}'(P_1(x))|\sum\limits_{j=N_1+1}^{N_1+m_1}x_j;\\
y_2:=&\tau^{-1}(1+\xi)|x_{\rho(P_2(x),m_2)}'(P_2(x))|\sum\limits_{j=1}^{m_2}x_j.
\end{align*}
Note that for any $N_1< j \le (N_1+m_1)$ and $1\le i\le N_1$ or $(N_1+m_1)<i\le N$, we have
$$
\tau x_j'(y_1)=(1+\xi)|x_{\rho(P_1(x),m_1)}'(P_1(x))|>|x_i'(P_1(x)-\mathcal{G}_{m_1}(P_1(x)))|.
$$
Hence, the only $m_1$-weak thresholding set for
$$
P_1(x)-\mathcal{G}_{m_1}(P_1(x))+y_1
$$
with weakness parameter $\tau$ is the set $\{j\colon N_1< j\le N_1+m_1\}$. Similarly, the only $m_2$-weak thresholding set for
$$
P_2(x)-\mathcal{G}_{m_2}(P_2(x))+y_2
$$
with weakness parameter $\tau$ is the set $\{j\colon 1\le j\le m_2\}$. Let $w_1$ and $w_2$ be an $m_1$-term and an $m_2$-term
Chebyshev $\tau$-greedy approximant for $P_1(x)-\mathcal{G}_{m_1}(P_1(x))+y_1$ and
$P_2(x)-\mathcal{G}_{m_2}(P_2(x))+y_2$, respectively. Considering that $\|P_1\|\le K_b$ and
$\|P_2\|\le 1+K_b$, we deduce that
\begin{align*}
 \|P_1(x)-\mathcal{G}_{m_1}(P_1(x))\|\le& K_b \|P_1(x)-\mathcal{G}_{m_1}(P_1(x))+y_1-w_1\|\nonumber\\
 \le& K_bK_{ws}(\tau)\sigma_{m_1}(P_1(x)-\mathcal{G}_{m_1}(P_1(x))+y_1)\nonumber\\
 \le& K_bK_{ws}(\tau)\|P_1(x)\|+K_bK_{ws}(\tau)\|y_1\|\nonumber\\
 \le& K_b^2K_{ws}(\tau)\|x\|+K_bK_{ws}(\tau)\|y_1\|.
\end{align*}
Analogously, we get that
$$
\|P_2(x)-\mathcal{G}_{m_2}(P_2(x))\|\le(1+K_b)^2K_{ws}(\tau)\|x\|+(1+K_b)K_{ws}(\tau)\|y_2\|.
$$
Reasoning as before, we see that any weak thresholding set of cardinality $m_1$ for
$$
P_1(x)+\tau^{2}(1-\xi)(1+\xi)^{-1}y_1
$$
is contained in $\{1\le j\le N_1\}$. So taking $u_1$ an $m_1$-term Chebyshev $\tau$-greedy approximant for $P_1(x)+\tau^{2}(1-\xi)(1+\xi)^{-1}y_1$, we deduce that

\begin{align*}
\|y_1\|=& \tau^{-2}(1-\xi)^{-1}(1+\xi)\|\tau^2 (1-\xi)(1+\xi)^{-1}y_1\|\nonumber\\
\le& \tau^{-2}(1-\xi)^{-1}(1+\xi)(1+K_b)\|P_1(x)-u_1+\tau^{2}(1-\xi)(1+\xi)^{-1}y_1\|\nonumber\\
\le&
\tau^{-2}(1-\xi)^{-1}(1+\xi)(1+K_b)K_{ws}(\tau)\sigma_{m_1}(P_1(x)+\tau^{2}(1-\xi)(1+\xi)^{-1}y_1)\nonumber\\
\le&\tau^{-2}(1-\xi)^{-1}(1+\xi)(1+K_b)K_{ws}(\tau)\|P_1(x)\|\nonumber\\
\le& \tau^{-2}(1-\xi)^{-1}(1+\xi)(1+K_b)K_bK_{ws}(\tau)\|x\|.
\end{align*}
Similarly, we obtain
$$
\|y_2\|\le \tau^{-2}(1-\xi)^{-1}(1+\xi)(1+K_b)K_bK_{ws}(\tau)\|x\|.
$$
From the above estimations, and letting $\xi\rightarrow 0$,  we deduce that

\begin{align}
 \|P_1(x)-\mathcal{G}_{m_1}(P_1(x))\|\le&
 (K_b^2K_{ws}(\tau)+(1+K_b)K_b^2K_{ws}^2(\tau)\tau^{-2})\|x\|;\label{5bound397311}\\
  \|P_2(x)-\mathcal{G}_{m_2}(P_2(x))\|\le&
  ((1+K_b)^2K_{ws}(\tau)+(1+K_b)^2K_bK_{ws}^2(\tau)\tau^{-2})\|x\|.\label{6bound397311}
\end{align}
Now suppose that $m_1=0$. Then $\G_{m_1}(P_1(x))=0$, so  \eqref{5bound397311} is clear, and we can obtain
\eqref{6bound397311} by the same argument as before because $m_2\le N_1$.
Finally, assume $m_2=0$. Then, \eqref{6bound397311} is clear. Now, if $m_1<N_1$, we apply the
same argument as before to obtain \eqref{5bound397311}. On the other hand, if  $m_1=N_1$, then
$\mathcal{G}_{m_1}(P_1(x))=P_1(x)$, so \eqref{5bound397311} is immediate. \\
To finish the proof, from  \eqref{splitting397311}, \eqref{5bound397311} and \eqref{6bound397311} we
infer that
$$
\|x-\mathcal{G}_{m}(x)\| \le (5K_b^2K_{ws}(\tau)+6 K_b^3K_{ws}(\tau)^2\tau^{-2})\|x\|,
$$
from where the upper bound for $K_{2q}$ is obtained.
\end{proof}
Note that in \cite[Theorem 7.4]{Dilworth2012} it was proved that any system $(x_i)_{1\le i\le N}$ that is BSG($\tau$) is also superdemocratic with constant depending only on the basis constant, $\tau$, and the BSG($\tau$) constant. A careful look at the proof shows that it is also valid for WSG($\tau$) systems. Also, the bounds in Theorem~\ref{Theoremalmostgreedydemocraticquasigreedy} are extracted from the proof of \cite[Theorem 3.3]{Dilworth2003} (with minor modifications for complex scalars), which is valid for finite dimensional spaces. Combining these results with Theorem~\ref{Theoremfinitewsg397311}, we obtain the following extension of \cite[Theorem 7.7]{Dilworth2012}. 

\begin{theorem}\label{extension} Let $N\in \N$ and let $E$ be an $N$-dimensional Banach space. Let $(x_i)_{1\le i\le N}\subset E$ be a WSG($\tau$) system for $E$, $0<\tau \le 1$, with constant $K_{ws}$ and basis constant $K_b$. Then, $(x_i)_{1\le i\le N}\subset E$ has almost greedy constant depending only on $K_{ws}$, $\tau$ and $K_b$.
\end{theorem}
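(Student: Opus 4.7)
The plan is to simply combine the three ingredients explicitly spelled out in the paragraph preceding the statement. The skeleton of the argument is: quasi-greedy $+$ democratic (with quantitative constants) $\Longrightarrow$ almost greedy, via Theorem~\ref{Theoremalmostgreedydemocraticquasigreedy}\ref{pointb}, whose bound $K_a \le 32 K_d (1+K_{1q})^4$ is extracted from the finite-dimensional-safe proof of \cite[Theorem 3.3]{Dilworth2003}.

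First I would produce a quasi-greedy constant. Applying Theorem~\ref{Theoremfinitewsg397311} directly to $(x_i)_{1\le i\le N}$ yields
$$
K_{2q} \le 5K_b^2 K_{ws} + 6K_b^3 K_{ws}^2\tau^{-2},
$$
a bound depending only on $K_{ws}$, $\tau$, and $K_b$. To feed this into Theorem~\ref{Theoremalmostgreedydemocraticquasigreedy}\ref{pointb}, which is stated in terms of the \emph{first} quasi-greedy constant, I would use the trivial estimate
$$
\|\mathcal{G}_m(x)\| \le \|x\| + \|x-\mathcal{G}_m(x)\| \le (1+K_{2q})\|x\|,
$$
so that $K_{1q} \le 1 + K_{2q}$ is also controlled by $K_{ws}$, $\tau$, $K_b$.

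Second, I would invoke the WSG($\tau$) version of \cite[Theorem 7.4]{Dilworth2012}, as indicated in the paragraph preceding the theorem (a careful reading of that proof shows it applies verbatim to WSG($\tau$) systems and not only to BSG($\tau$) ones). This provides a superdemocracy constant $K_{sd}$, and in particular a democracy constant $K_d \le K_{sd}$, depending only on $K_{ws}$, $\tau$, and $K_b$.

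Finally, I would apply Theorem~\ref{Theoremalmostgreedydemocraticquasigreedy}\ref{pointb}, which remains valid in finite dimensions, to conclude
$$
K_a \le 32 K_d (1+K_{1q})^4,
$$
a quantity depending only on $K_{ws}$, $\tau$, and $K_b$, as required. No real obstacle arises in the argument itself, since all three ingredients are in place; the only thing one has to double-check is that the WSG($\tau$) extension of \cite[Theorem 7.4]{Dilworth2012} truly follows from its original proof with only cosmetic changes (replacing the branch greedy selector by an arbitrary weak thresholding set of appropriate cardinality), which is the one point I would verify explicitly before writing up the final proof.
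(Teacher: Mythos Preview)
Your proposal is correct and follows essentially the same approach as the paper: combine Theorem~\ref{Theoremfinitewsg397311} for the quasi-greedy constant, the WSG($\tau$) adaptation of \cite[Theorem 7.4]{Dilworth2012} for the (super)democracy constant, and Theorem~\ref{Theoremalmostgreedydemocraticquasigreedy}\ref{pointb} to conclude. Your explicit passage from $K_{2q}$ to $K_{1q}$ via $K_{1q}\le 1+K_{2q}$ is a detail the paper leaves implicit, but otherwise the arguments coincide.
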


Finally, we note that the branch thresholding algorithm can be and has been considered in infinite dimensional spaces as well. Indeed, in \cite{Dilworth2012}, the authors do so and prove that every weakly null semi-normalized branch quasi-greedy basic sequence has a quasi-greedy subsequence. If we extend the definitions of branch semi-greedy and branch almost greedy systems to the infinite dimensional context in the natural manner, it is immediate from the definitions that every semi-greedy system is branch semi-greedy, every branch semi-greedy system is weak semi-greedy, and the corresponding implications hold for the almost greedy case. Therefore, BSG($\tau$) and BAG($\tau$) Markushevich bases can be added to the equivalences in Corollary~\ref{corollaryequivalences}.

\end{document}